\newtheorem{theorem}{Theorem}[section]
\newtheorem{thm}[theorem]{Theorem}
\newtheorem{prop}[theorem]{Proposition}
\newtheorem{lem}[theorem]{Lemma}
\newtheorem{cor}[theorem]{Corollary}
\makeatletter \@addtoreset{equation}{section}
\newcommand{\qbinom}[2]{\genfrac{[}{]}{0pt}{}{#1}{#2}}
\newcommand{\Mid}{\:|\:}  
\DeclareMathOperator*{\CT}{CT}
\def\x{\boldsymbol{x}}
\begin{document}

\title[$q$-Baker--Forrester conjecture]{A proof of the multi-component $q$-Baker--Forrester conjecture}

\author{Yue Zhou}

\address{School of Mathematics and Statistics, HNP-LAMA, Central South University,
Changsha 410083, P.R. China}

\email{zhouyue@csu.edu.cn}

\subjclass[2010]{05A30, 33D70}

\date{March 23, 2025}

\begin{abstract}
\noindent
The Selberg integral, an $n$-dimensional generalization of the Euler beta integral,
plays a central role in random matrix theory, Calogero--Sutherland quantum many body systems,
Knizhnik--Zamolodchikov equations, and multivariable orthogonal polynomial theory.
The Selberg integral is known to be equivalent to the Morris constant term identity.
In 1998, Baker and Forrester conjectured a  $(p+1)$-component generalization of
the $q$-Morris identity. It in turn yields a generalization of the Selberg integral.
The $p=1$ case of Baker and Forrester's conjecture was proved by K\'{a}rolyi, Nagy, Petrov and Volkov in 2015.
In this paper, we give a proof of the $(p+1)$-component $q$-Baker--Forrester conjecture, thereby
settling this 26-year-old conjecture.

\noindent
\textbf{Keywords:} Selberg integral; $q$-Morris identity; $q$-Baker--Forrester conjecture; constant term identity;
splitting formula; iterated Laurent series.

\end{abstract}
\maketitle

\section{Introduction}\label{s-intr}

In this introduction, we first briefly review the main developments in the area of constant term identities.
The study of constant term identities dated back to Freeman Dyson, who conjectured his now famous constant term (ex)-conjecture \cite{dyson1962} in 1962.

\begin{thm}[Dyson's ex-conjecture]
For nonnegative integers $a_1,\dots,a_n$,
\begin{equation}\label{thm-Dyson}
\CT_{\x} \,\prod_{1\leq i\neq j\leq n}(1-x_i/x_j)^{a_i}=\frac{(a_1+\cdots+a_n)!}{a_1!\cdots a_n!},
\end{equation}
where $\displaystyle\CT_{\x}f$ denotes taking the constant term of the Laurent polynomial $f$ with respect to $\x:=(x_1,\dots,x_n)$.
\end{thm}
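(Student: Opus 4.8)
The plan is to follow Good's partial‑fraction argument, inducting on the weight $A:=a_1+\cdots+a_n$. Put $D(\x;\mathbf{a}):=\prod_{1\le i\ne j\le n}(1-x_i/x_j)^{a_i}$ and $F(\mathbf{a}):=\CT_{\x}D(\x;\mathbf{a})$, so the goal is $F(\mathbf{a})=A!/(a_1!\cdots a_n!)$. The engine is the Lagrange interpolation identity obtained by interpolating the constant polynomial $1$ through the nodes $x_1,\dots,x_n$ and evaluating at $x=0$:
\[
\sum_{k=1}^{n}\ \prod_{\substack{j=1\\ j\ne k}}^{n}\frac{1}{1-x_k/x_j}=1 .
\]
Multiplying $D(\x;\mathbf{a})$ through by this identity and using that the factors of $D$ carrying $x_k$ in their numerator are precisely $(1-x_k/x_j)^{a_k}$ with $j\ne k$, one obtains the rational‑function identity
\[
D(\x;\mathbf{a})=\sum_{k=1}^{n}D(\x;\mathbf{a}-\mathbf{e}_k),
\]
where $\mathbf{e}_k$ is the $k$-th unit vector. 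For the indices $k$ with $a_k\ge1$ the summand is an honest Laurent polynomial with constant term $F(\mathbf{a}-\mathbf{e}_k)$; the remaining, degenerate summands (those with $a_k=0$, which really do involve the factors $(1-x_k/x_j)^{-1}$) I claim contribute nothing.

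To see the vanishing, fix $k$ with $a_k=0$ and isolate the dependence on $x_k$: since $D(\x;\mathbf{a})$ equals $\bigl(\prod_{i\ne k}(1-x_i/x_k)^{a_i}\bigr)$ times a factor free of $x_k$, a short computation turns $D(\x;\mathbf{a}-\mathbf{e}_k)$ into
\[
(-1)^{n-1}\Bigl(\prod_{j\ne k}x_j\Bigr)\, x_k^{-A}\prod_{i\ne k}(x_k-x_i)^{a_i-1}\ \times\ (\text{factor free of }x_k).
\]
Extracting the constant term then amounts to reading off the coefficient of $x_k^{A}$ in $\prod_{i\ne k}(x_k-x_i)^{a_i-1}$; when every $a_i$ with $i\ne k$ is positive this polynomial has degree $A-(n-1)<A$, so that coefficient is $0$.

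It remains to assemble the induction. The base case $F(0,\dots,0)=\CT_{\x}1=1$ is immediate, and the multinomial coefficient obeys the same recursion because
\[
\sum_{k:\,a_k\ge1}\frac{(A-1)!}{a_1!\cdots(a_k-1)!\cdots a_n!}=\frac{(A-1)!}{a_1!\cdots a_n!}\sum_{k=1}^{n}a_k=\frac{A!}{a_1!\cdots a_n!},
\]
the additive form of Pascal's/Vandermonde's relation. Hence $F(\mathbf{a})=A!/(a_1!\cdots a_n!)$ follows by induction on $A$.

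The hard part is rigor, not ingenuity. The identity $D(\x;\mathbf{a})=\sum_k D(\x;\mathbf{a}-\mathbf{e}_k)$ lives in the field of rational functions and several of its terms are not Laurent polynomials, so ``take the constant term of each side term by term'' is legitimate only after one commits to a coherent rule for expanding every $(1-x_i/x_j)^{-1}$ --- that is, one works in a ring of iterated Laurent series attached to a fixed total order of $x_1,\dots,x_n$, checks that its constant‑term functional restricts to the naive one on genuine Laurent polynomials, and then re‑examines the degenerate summands in that setting. In particular the crude degree bound above must be replaced by an argument that still works when several of the $a_i$ vanish, which is exactly the point at which the choice of expansion order earns its keep.
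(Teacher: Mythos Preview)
The paper does not give its own proof of this theorem; it is stated as background and attributed to Gunson, Wilson, and especially Good's one-page Lagrange-interpolation argument. Your proposal is precisely Good's proof, so on the level of ideas you are aligned with what the paper invokes.

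There is, however, a genuine gap that you yourself flag but do not close: the vanishing of the degenerate summands $D(\x;\mathbf a-\mathbf e_k)$ when $a_k=0$ and \emph{some other} $a_l$ also vanishes. Your degree count on $\prod_{i\ne k}(x_k-x_i)^{a_i-1}$ only applies when that product is a polynomial in $x_k$, i.e.\ when $a_i\ge1$ for all $i\ne k$; otherwise factors $(x_k-x_l)^{-1}$ appear and the argument as written breaks. You are right that iterated Laurent series can rescue this, but you have not actually carried it out, and with a single fixed expansion order you cannot always arrange that $x_k$ is ``large'' for every degenerate $k$ simultaneously.

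The clean way to avoid the issue entirely---and the way Good's proof is usually presented---is a double induction on $(n,A)$: if some $a_k=0$, first take $\CT_{x_k}$ directly. Since the only $x_k$-dependence of $D(\x;\mathbf a)$ is through $\prod_{i\ne k}(1-x_i/x_k)^{a_i}$, a polynomial in $x_k^{-1}$ with constant term $1$, you get $F_n(\mathbf a)=F_{n-1}(\mathbf a^{(k)})$, and the multinomial coefficient satisfies the same boundary relation. This reduces to $n-1$ variables. Only when \emph{all} $a_i\ge1$ do you invoke the Lagrange identity, and then every $D(\x;\mathbf a-\mathbf e_k)$ is an honest Laurent polynomial, so no degenerate summands arise and no iterated-Laurent-series machinery is needed. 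With this adjustment your proof is complete.
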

Dyson's conjecture was soon proved by Gunson \cite{gunson} and Wilson \cite{wilson}. Twenty years later Wilson
received his Nobel Prize in physics, while \cite{wilson} is his first publication.
In 1970, using Lagrange interpolation Good \cite{good} discovered a very elegant one-page proof. Much later, Zeilberger gave the first  combinatorial proof using tournaments \cite{zeil}.

In 1975, Andrews \cite[Page 216]{andrews1975} conjectured a $q$-analogue of the Dyson constant term identity.
\begin{thm}[Andrews' ex-conjecture]
For nonnegative integers $a_1,\dots,a_n$,
\begin{equation}\label{q-Dyson}
\CT_{\x} \prod_{1\leq i<j\leq n}
(x_i/x_j)_{a_i}(qx_j/x_i)_{a_j}=
\frac{(q)_{a_1+\cdots+a_n}}{(q)_{a_1}(q)_{a_2}\cdots(q)_{a_n}},
\end{equation}
where $(a)_z=(a;q)_z:=\prod_{j=0}^{z-1}(1-aq^j)$ is the $q$-shifted factorial for a positive integer $z$ and $(a)_0=1$.
\end{thm}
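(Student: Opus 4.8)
\medskip
\noindent\textbf{Proof strategy.}
The plan is to establish \eqref{q-Dyson} by induction on the number of variables $n$, peeling off one variable at each step so as to reduce an $n$-dimensional constant term to $(n-1)$-dimensional ones. The cases $n=1$ (empty product) and $n=2$ are immediate; for $n=2$,
\[
\CT_{x_1,x_2}\,(x_1/x_2)_{a_1}\,(qx_2/x_1)_{a_2}=\frac{(q)_{a_1+a_2}}{(q)_{a_1}\,(q)_{a_2}},
\]
which follows by expanding both factors via the $q$-binomial theorem and matching the power $x_1^0x_2^0$ (a $q$-Chu--Vandermonde summation). For the inductive step I would first fix the framework of the constant-term calculus: I would work in the field of iterated Laurent series $\CC((x_n))((x_{n-1}))\cdots((x_1))$, in which every rational function of $x_1,\dots,x_n$ with nonzero denominator has a unique expansion, the operator $\CT_{\x}$ is well defined, and --- this is the feature that makes an inductive reduction possible --- it \emph{splits} as $\CT_{\x}=\CT_{x_1}\circ\CT_{x_2}\circ\cdots\circ\CT_{x_n}$.

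Writing
\[
F_n(\x):=\prod_{1\le i<j\le n}(x_i/x_j)_{a_i}(qx_j/x_i)_{a_j}=F_{n-1}(x_1,\dots,x_{n-1})\cdot\prod_{i=1}^{n-1}(x_i/x_n)_{a_i}(qx_n/x_i)_{a_n},
\]
the factor depending on $x_n$ is a Laurent polynomial in $x_n$ whose coefficients are Laurent polynomials in $x_1,\dots,x_{n-1}$, so evaluating $\CT_{x_n}F_n$ is the extraction of a single such coefficient. The obstruction is that this coefficient is not merely a scalar multiple of $F_{n-1}$. In Good's one-page proof of the classical identity \eqref{thm-Dyson}, the analogous one-variable reduction is driven by the Lagrange-interpolation/partial-fraction identity $\sum_{k}\prod_{i\ne k}(1-x_i/x_k)^{-1}=1$; its natural $q$-deformation acquires genuine correction terms, and for this reason a direct imitation of Good's argument is unavailable --- this is essentially why \eqref{q-Dyson} stayed open for a decade after Andrews stated it, and it is the main difficulty any proof must confront.

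To get past it I would follow the Zeilberger--Bressoud/Gessel--Xin philosophy and prove, by induction, a statement \emph{stronger} than \eqref{q-Dyson}: instead of $\CT_{\x}F_n$ alone, one evaluates in closed form the whole family of constant terms $\CT_{\x}\bigl(x_1^{t_1}\cdots x_n^{t_n}\,F_n\bigr)$ over the finite set of shifts $(t_1,\dots,t_n)$ that occur when $x_n$ is removed --- equivalently, one establishes a recursion expressing $\CT_{x_n}F_n$ as an explicit linear combination of $(n-1)$-variable products of the same form as in \eqref{q-Dyson} but with one of the boundary parameters shifted. Such a refined identity is rigid enough that, after one applies $\CT_{\x}=\CT_{x_1}\circ\cdots\circ\CT_{x_n}$, the correction terms reorganize into a telescoping sum; the base cases of the refined induction are again the $q$-binomial and $q$-Chu--Vandermonde summations, and simplifying the resulting ratios of $q$-shifted factorials produces the right-hand side of \eqref{q-Dyson}. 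The remaining work --- choosing an admissible ordering of the variables, justifying that $\CT_{\x}$ splits in the iterated Laurent series field, and interchanging $\CT$ with the series expansions used along the way --- is routine but must be carried out with care.
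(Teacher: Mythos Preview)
The paper does not supply its own proof of this theorem; it is quoted as background, with pointers to the existing proofs of Zeilberger--Bressoud, Gessel--Xin, K\'arolyi--Nagy, and Cai. So there is no in-paper argument to compare against directly, and your proposal has to be judged on its own terms.

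You have correctly diagnosed the obstruction (Good's Lagrange-interpolation trick does not $q$-deform cleanly) and correctly named the general philosophy that every successful proof has used (strengthen the statement so that an induction on $n$ closes). But what you have written is a description of a proof rather than a proof. The decisive step --- the explicit form of the ``stronger statement'' and the verification that the correction terms telescope --- is precisely where the difficulty lives, and you have left it as an assertion. Each of the known proofs you allude to hinges on a very specific auxiliary structure: Zeilberger--Bressoud on a sign-reversing involution on tournaments, Gessel--Xin on a particular rational function whose partial-fraction residues all vanish, Cai on a specific extra parameter that makes a splitting recursion close, K\'arolyi--Nagy on the Combinatorial Nullstellensatz applied to a carefully chosen polynomial. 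Your sketch commits to none of these. The sentence ``one evaluates in closed form the whole family of constant terms $\CT_{\x}(x_1^{t_1}\cdots x_n^{t_n}F_n)$ over the finite set of shifts that occur when $x_n$ is removed'' is not a step one can carry out routinely: most such shifted constant terms have no product formula, and the set of shifts is not a priori stable under eliminating a variable. Until you write down the refined identity explicitly and check that it reproduces itself after one variable is peeled off, the argument remains a plan rather than a proof.
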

Andrews' conjecture remained open for ten years until Zeilberger and Bressoud \cite{zeil-bres1985} generalized the above-mentioned combinatorial method from \cite{zeil}.
In 2006, Gessel and Xin \cite{GX} obtained a second proof (the first algebraic proof) using iterated Laurent series.
In 2014, K\'{a}rolyi and Nagy \cite{KN} discovered the ``Proof from the book" using the combinatorial Nullstellensatz.
In the same year, Cai \cite{cai} found the first inductive proof by adding an extra parameter to the original problem.

The Laurent polynomials on the left-hand side of \eqref{thm-Dyson} and \eqref{q-Dyson} are usually called the Dyson product and the $q$-Dyson product respectively \cite{Ekhad-zeil,Gessel-Lv-Xin-Zhou2008,Sills,KLW,XZ2023}.
Besides the constant term of the $q$-Dyson product, Sills \cite{Sills} conjectured three other coefficients.
Lv, Xin and Zhou \cite{LXZ} proved Sills' conjectures and obtained all the first-layer coefficients of the $q$-Dyson product.
That is
\begin{equation}\label{e-firstlayer}
 \CT_{\x} \frac{x_{j_1}^{p_1}\cdots x_{j_t}^{p_t}}{x_{i_1}\cdots x_{i_s}}\prod_{1\leq i<j\leq n}(x_i/x_j)_{a_i}(qx_j/x_i)_{a_j},
\end{equation}
where all the $i_1,\dots,i_s,j_1,\dots,j_t$ are distinct and the $p_i$ are positive integers.
Here we omit the explicit formula due to its complexity.
Sills' ex-conjectures correspond to the cases $s,t\leq 2$ in \eqref{e-firstlayer}.
The equal parameter case, i.e., $a_1=\cdots=a_n=c$ of \eqref{e-firstlayer} was obtained by Stembridge \cite{stembridge1987} in his first-layer formulas for the characters of $\mathrm{SL}(n,\mathbb{C})$.

In 2000, Kadell made a remarkable conjecture
on a symmetric function generalization of the $q$-Dyson identity.
Let $X=(x_1,x_2,\dots)$ be an alphabet of countably many variables.
Define the $r$th complete symmetric function $h_r(X)$
in terms of its generating function as
\begin{equation}\label{e-gfcomplete}
\sum_{r\geq 0} z^r h_r(X)=\prod_{i\geq 1}
\frac{1}{1-zx_i}.
\end{equation}
More generally, the complete symmetric function indexed by a partition $\lambda=(\lambda_1,\lambda_2,\dots)$ is defined as
\[
h_{\lambda}:=h_{\lambda_1}h_{\lambda_2}\cdots,
\]
where a partition $\lambda$ is a weakly decreasing sequence of nonnegative integers such that only finitely many $\lambda_i$ are positive.
For $a:=(a_1,\dots,a_n)$ a sequence of nonnegative integers, let $\x^{(a)}$ denote
the alphabet
\begin{equation}\label{alphabet-x}
\x^{(a)}=(x_1,x_1q,\dots,x_1q^{a_1-1},\dots,x_n,x_nq,\dots,x_nq^{a_n-1})
\end{equation}
of cardinality $|a|:=a_1+\cdots+a_n$, and define
the generalized $q$-Dyson constant term
\begin{equation}\label{D-Kadell}
D_{v,\lambda}(a)=\CT_{\x}
x^{-v}h_{\lambda}\big(\x^{(a)}\big)
\prod_{1\leq i<j\leq n}
(x_i/x_j)_{a_i}(qx_j/x_i)_{a_j}.
\end{equation}
Here $v=(v_1,\dots,v_n)\in\mathbb{Z}^n$,
$x^v$ denotes the monomial $x_1^{v_1}\cdots x_n^{v_n}$
and $\lambda$ is a partition.
For the constant term \eqref{D-Kadell},
Kadell formulated the
following conjecture \cite[Conjecture 4]{Kadell2000}.
\begin{thm}[Kadell's ex-conjecture]\label{conj-Kadell}
For $r$ a positive integer and $v$ a vector of nonnegative integers such that $|v|=r$,
\begin{equation}\label{kadellconj}
D_{v,(r)}(a)=
\begin{cases}
\displaystyle
\frac{q^{\sum_{i=k+1}^n a_i}(1-q^{a_k})(q^{|a|})_r}{(1-q^{|a|})(q^{|a|-a_k+1})_r}
\prod_{i=1}^n\qbinom{a_i+\cdots+a_n}{a_i}
& \text{if $v=(0^{k-1},r,0^{n-k})$}, \\[6mm]
0 & \text{otherwise},
\end{cases}
\end{equation}
where $\qbinom{n}{c}=(q^{n-c+1})_c/(q)_c$ is the $q$-binomial coefficient for nonnegative integers $n$ and $c$.
\end{thm}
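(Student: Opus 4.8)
The plan is to reduce \eqref{kadellconj} to statements about coefficients of the $q$-Dyson product and then to attack those by induction on the number of variables, stripping off the distinguished variable at each step.

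First, iterating the elementary identity $h_s(x,xq,\dots,xq^{a-1})=x^{s}\qbinom{a+s-1}{s}$ over the blocks of $\x^{(a)}$ gives the splitting
\begin{equation*}
h_r\bigl(\x^{(a)}\bigr)=\sum_{\substack{k\in\NN^{n}\\ |k|=r}}\Bigl(\,\prod_{i=1}^{n}\qbinom{a_i+k_i-1}{k_i}\Bigr)\,x^{k},
\end{equation*}
so that $D_{v,(r)}(a)$ is a finite $\NN$-linear combination of coefficients of the $q$-Dyson product at Laurent monomials $x^{k-v}$. Since $h_r$ is homogeneous of degree $r$ and the $q$-Dyson product is homogeneous of degree $0$, each such coefficient, hence $D_{v,(r)}(a)$ itself, vanishes unless $|v|=r$ — exactly the hypothesis — and then only the degree-zero layer of the $q$-Dyson product is relevant: its leading term is Andrews' evaluation \eqref{q-Dyson}, and its codimension-one part is the first-layer formula \eqref{e-firstlayer} of Lv--Xin--Zhou.

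The induction is on $n$, with base case $n=1$ given by $D_{(r),(r)}(a_1)=\CT_{x_1}x_1^{-r}h_r(x_1,x_1q,\dots,x_1q^{a_1-1})=\qbinom{a_1+r-1}{r}=(q^{a_1})_r/(q)_r$, in agreement with \eqref{kadellconj}. For the inductive step I would work in the field of iterated Laurent series $\CC((x_n))\cdots((x_1))$, split off the block of the distinguished variable $x_K$ in $h_r(\x^{(a)})$, writing $h_r(\x^{(a)})=\sum_{s=0}^{r}x_K^{s}\qbinom{a_K+s-1}{s}\,h_{r-s}$ of the alphabet on the remaining blocks, and then apply $\CT_{x_K}$ to the $x_K$-dependent part $\prod_{i<K}(x_i/x_K)_{a_i}(qx_K/x_i)_{a_K}\prod_{j>K}(x_K/x_j)_{a_K}(qx_j/x_K)_{a_j}$ of the $q$-Dyson product. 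That part being homogeneous of degree $0$, $\CT_{x_K}$ of $x_K^{\,s-r}$ times it is a Laurent polynomial in $x_1,\dots,\widehat{x_K},\dots,x_n$ homogeneous of degree $s-r$, whose coefficients are, by the two-variable instance of \eqref{q-Dyson} for the factors not meeting a fixed index $j$, products of Gaussian binomials $\prod_{i\ne j,K}\qbinom{a_i+a_K}{a_i}$ times a single alternating $q$-sum carried by the $(x_j,x_K)$-factor. Pairing this against $h_{r-s}\cdot P_{(a_1,\dots,\widehat{a_K},\dots,a_n)}$ and taking the constant term in the remaining $n-1$ variables produces a finite sum involving constant terms of the same type in $n-1$ variables; for the extremal indices among them the inductive hypothesis supplies \eqref{kadellconj}, and collapsing the resulting sum by $q$-Chu--Vandermonde / $q$-Saalsch\"{u}tz-type identities and a telescoping in $j$ should yield the right-hand side of \eqref{kadellconj}, with a parallel computation for non-extremal $v$ giving the vanishing alternative.

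The main obstacle is that, alongside the extremal indices, the reduction also produces $(n-1)$-variable constant terms indexed by vectors with some (bounded) negative entries, which probe the higher layers of the $q$-Dyson product lying beyond \eqref{q-Dyson} and \eqref{e-firstlayer}; to close the induction one must therefore either (a) guess and prove a closed form for the enlarged family $D_{w,(m)}(a)$ with $w_i\ge -a_i$, extending both \eqref{kadellconj} and the first-layer formula, or (b) exhibit a cancellation that removes the non-extremal contributions, in either case together with the $q$-hypergeometric collapse of the ensuing sums — already the two-variable case of \eqref{kadellconj} rests on the identity $\sum_{m}q^{m^{2}}\qbinom{a_1}{m}\qbinom{a_2}{m}=\qbinom{a_1+a_2}{a_1}$, and the general case needs its higher companions. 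A complementary route, sidestepping part of this, is Cai's device of adjoining an extra parameter — treat one block size as an indeterminate, derive a recursion, and pin \eqref{kadellconj} down by polynomiality together with enough specializations, the obstacle then migrating to the requisite degree bound; or one may try to deduce \eqref{kadellconj} directly from the Morris-type splitting-formula machinery underlying the main theorem of this paper.
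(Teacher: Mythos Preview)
The paper does not give its own proof of this statement: Theorem~\ref{conj-Kadell} is quoted in the introduction as a known result, attributed to K\'arolyi--Lascoux--Warnaar \cite{KLW} (via multivariable Lagrange interpolation and key polynomials) and, independently, to Cai \cite{cai} (by an inductive argument with an added parameter). So there is no in-paper proof to compare your proposal against; the relevant comparison is to those two external approaches.

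Your proposal is not a proof but a programme with a gap you yourself identify. The expansion of $h_r(\x^{(a)})$ over blocks and the base case $n=1$ are fine, but the inductive step does not close: after taking $\CT_{x_K}$ of $x_K^{s-r}$ times the $x_K$-part of the $q$-Dyson product, the resulting $(n-1)$-variable constant terms are indexed by vectors $w$ that are \emph{not} of the form covered by the inductive hypothesis \eqref{kadellconj}. You land squarely in the higher layers of the $q$-Dyson product, for which no closed form of the required generality is available; the first-layer formula \eqref{e-firstlayer} handles only a single negative entry equal to $-1$ in each coordinate, far short of what your reduction produces. Your option~(a) --- guess and prove a closed form for the enlarged family $D_{w,(m)}(a)$ with $w_i\ge -a_i$ --- is strictly harder than the theorem you are trying to prove, and option~(b) --- an unspecified cancellation --- is not an argument. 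The ``two-variable companion'' you mention is the $q$-Vandermonde sum, but the higher companions needed to collapse the full sum are not identified, and there is no reason to expect a clean telescoping without a structural input such as the key-polynomial/Demazure machinery of \cite{KLW}.

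Of the alternatives you list at the end, Cai's extra-parameter device is the one that actually succeeds inductively and is closest in spirit to what you attempt; the point there is that enlarging the statement (rather than the coefficient family) makes the induction self-closing, because the added parameter supplies exactly the degree of freedom your bare induction lacks. If you want to turn your sketch into a proof, that is the route to pursue; the direct variable-stripping you propose, without such an enlargement, does not go through.
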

In fact Kadell only considered $v=(r,0^{n-1})$ in his conjecture, but the
more general statement given above is what was proved by K\'{a}rolyi,
Lascoux and Warnaar in \cite[Theorem 1.3]{KLW} using multivariable Lagrange
interpolation and key polynomials.
For a sequence $u=(u_0,\dots,u_n)$ of integers, denote
by $u^{+}$ the sequence obtained from $u$ by ordering the $u_i$
in weakly decreasing order. K\'{a}rolyi et al.\ also
proved a closed-form expression for
$D_{v,v^{+}}(a)$ in the case when all the parts of $v$ are nonnegative integers and have multiplicity one, i.e., all the $v_i\geq 0$ and $v_i\neq v_j$ for all $1\leq i<j\leq n$.
Subsequently, Cai \cite{cai} gave an inductive proof of Kadell's conjecture.
Recently, we obtained some further results on $D_{v,v^{+}}(a)$, see \cite{Zhou, Zhou-JCTA}.

In 1982, Macdonald discovered that the equal parameter case of the $q$-Dyson identity, i.e., the $a_1=\cdots=a_n=c$ case, can be formulated as a combinatorial
identity for the root system $\mathrm{A}_{n-1}$.
Then he conjectured a constant term identity for arbitrary root systems \cite{macdonald82}:
\begin{thm}[Macdonald's ex-conjecture]
For a nonnegative integer $c$,
\[
\CT \,\prod_{\alpha\in R^+}(e^{-\alpha})_c(qe^{\alpha})_c=
\prod_{i=1}^r\qbinom{d_ic}{c}.
\]
Here $R$ is a reduced irreducible finite root system of rank $r$, $R^+$ is the set of positive roots and $d_1,\dots,d_r$ are the degrees of the fundamental invariants.
\end{thm}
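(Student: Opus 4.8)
\noindent
The plan is to recast the left-hand side as the constant term of a Macdonald $q$-weight, settle type $\mathrm{A}_{n-1}$ from material already recalled above, and treat the remaining root systems uniformly through the double affine Hecke algebra. Write $\Delta_c:=\prod_{\alpha\in R^+}(e^{-\alpha})_c(qe^{\alpha})_c$, the equal-parameter Macdonald $q$-weight with $t=q^c$, so that the left-hand side is $\CT\Delta_c$ and the assertion becomes $\CT\Delta_c=\prod_{i=1}^r\qbinom{d_ic}{c}$.

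For $R=\mathrm{A}_{n-1}$ this is, as noted in the text, precisely the equal-parameter specialization $a_1=\cdots=a_n=c$ of Andrews' $q$-Dyson identity \eqref{q-Dyson}: identifying $e^{\epsilon_i-\epsilon_j}$ with $x_i/x_j$ and then applying the substitution $x_i\mapsto x_i^{-1}$, which preserves constant terms, matches $\CT\Delta_c$ with the left-hand side of \eqref{q-Dyson}, while on the right the value $(q)_{nc}/(q)_c^n$ telescopes as a product of $q$-binomials into $\prod_{i=1}^{n-1}\qbinom{(i+1)c}{c}$, the degrees of $\mathrm{A}_{n-1}$ being $2,3,\dots,n$. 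So this case needs nothing new.

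For a general reduced irreducible $R$ I would build the following machinery. (i) Construct the double affine Hecke algebra $\mathbb{H}$ of $R$ and realize it on the group algebra of the weight lattice via Demazure--Lusztig operators; define the Macdonald polynomials $P_\lambda$ (at $t=q^c$) as the dominance-triangular joint eigenfunctions of the commuting Cherednik operators $Y^{\mu}$. (ii) Exploit the anti-involution of $\mathbb{H}$ interchanging its two polynomial subalgebras to prove the self-duality $P_\lambda(q^{\mu}t^{\rho})/P_\lambda(t^{\rho})=P_{\mu}(q^{\lambda}t^{\rho})/P_{\mu}(t^{\rho})$, and specialize it to read off the principal-specialization (evaluation) formula for $P_\lambda(t^{\rho})$ as an explicit product over $R^{+}$ (the companion norm formula for the Cherednik inner product follows along the same lines from the intertwiners of $\mathbb{H}$). (iii) Using these, construct a $q$-difference shift operator $\mathcal{G}_c$ intertwining the parameters $c$ and $c+1$: a divided-difference-type operator with $\mathcal{G}_c\,\Delta_c=\kappa_c\,\Delta_{c+1}$ for an explicit scalar $\kappa_c$ and with $\CT(\mathcal{G}_c f)=\CT f$ for all $f$. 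Then $\CT\Delta_{c+1}=\kappa_c^{-1}\,\CT\Delta_c$, and iterating down to the trivial base case $\CT\Delta_0=1$ while simplifying $\prod_{c'=0}^{c-1}\kappa_{c'}^{-1}$ produces $\prod_{i=1}^r\qbinom{d_ic}{c}$, the degrees $d_i$ entering through $|R^{+}|=\sum_{i=1}^r(d_i-1)$ and the $q$-factorials that accumulate along the way. (An alternative to step (iii), following Cherednik, evaluates $\CT\Delta_c$ directly inside $\mathbb{H}$ from the evaluation formula and a Cauchy-type identity, but it runs through the same duality.)

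I expect the genuine obstacle to be carrying out steps (ii) and (iii) \emph{uniformly}. The Hecke-algebra formalism hands one the structural product formulas---evaluation, duality, and the norms of the $P_\lambda$ relative to that of $P_0=1$---with comparatively little type-specific input; but the absolute constant $\CT\Delta_c$ itself, which is exactly where the degrees $d_i$ must materialize, is not delivered for free: one must either construct the shift operator $\mathcal{G}_c$ and pin down $\kappa_c$, or push through Cherednik's direct evaluation. The whole point is to achieve this without a case-by-case verification for the exceptional systems $\mathrm{E}_6,\mathrm{E}_7,\mathrm{E}_8,\mathrm{F}_4,\mathrm{G}_2$, and securing that uniformity is the crux.
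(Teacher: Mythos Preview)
The paper does not contain a proof of this statement. Macdonald's ex-conjecture appears only in the introduction as historical background: the paper notes that many cases were first settled individually, that Opdam gave a uniform proof for $q=1$ via hypergeometric shift operators, and that Cherednik \cite{cherednik} ultimately gave a case-free proof using the double affine Hecke algebra. No argument is supplied beyond these citations, so there is no ``paper's own proof'' to compare against.

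Your proposal is, in outline, exactly the approach the paper attributes to the literature: the $\mathrm{A}_{n-1}$ case via the equal-parameter $q$-Dyson identity (which the paper itself points out), and the general reduced case via Cherednik's double affine Hecke algebra together with Macdonald polynomials, duality, and shift operators (the latter being Opdam's route at $q=1$). So on the level of strategy you are aligned with the cited sources rather than diverging from anything in the paper.

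That said, what you have written is a plan, not a proof, and you say as much. Steps (ii) and (iii) --- the evaluation/duality formulas and the construction of the shift operator $\mathcal{G}_c$ with the precise scalar $\kappa_c$ --- are each substantial theorems in their own right, and you have not carried them out; you have only indicated where the degrees $d_i$ should eventually emerge. If the goal were a self-contained proof, the entire content would lie in executing those steps, which amounts to reproducing significant portions of \cite{cherednik} and \cite{opdam}. As a pointer to the correct literature your sketch is accurate; as a proof it is incomplete by your own admission.
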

Initially, many cases of Macdonald's conjecture were proved on a case by case basis \cite{Askey80,GG,Hab,kadell1,zeil-87}.
A uniform proof for $q=1$ was first found by Opdam \cite{opdam} using hypergeometric shift operators.
Thirteen years after Macdonald presented his constant term conjecture,
Cherednik \cite{cherednik} finally gave a case-free proof based on his double affine Hecke algebra. For more on the extensive literature of Macdonald's constant term conjecture we refer the reader to \cite{FW} and  references therein.

Another important generalization of the equal parameter case of the $q$-Dyson identity
is the $q$-Morris constant term identity.
\begin{thm}[Morris' ex-conjecture]
For nonnegative integers $a,b,c$,
\begin{equation}\label{q-Morris}
\CT_{\x} \prod_{i=1}^{n}\Big(\frac{x_{0}}{x_{i}}\Big)_a
\Big(\frac{qx_{i}}{x_{0}}\Big)_b\prod_{1\leq i<j\leq n}
\Big(\frac{x_{i}}{x_{j}}\Big)_c\Big(\frac{qx_{j}}{x_{i}}\Big)_c
=\prod_{i=0}^{n-1}\frac{(q)_{a+b+ic}(q)_{(i+1)c}}{(q)_{a+ic}(q)_{b+ic}(q)_{c}}.
\end{equation}
\end{thm}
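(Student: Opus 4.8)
The plan is to prove \eqref{q-Morris} by induction on $n$; throughout, the objects are honest Laurent polynomials, so $\CT_{\x}$ is unambiguous (and everything fits the iterated-Laurent-series framework used later in the paper). I would first eliminate $x_0$: the monomial substitution $x_i\mapsto x_0x_i$ $(1\le i\le n)$ preserves $\CT_{\x}$ — it sends $x_0^{e_0}x_1^{e_1}\cdots x_n^{e_n}$ to $x_0^{e_0+\cdots+e_n}x_1^{e_1}\cdots x_n^{e_n}$, so the coefficient of $x_0^0\cdots x_n^0$ is unchanged — and it turns the left-hand side of \eqref{q-Morris} into
\begin{equation*}
M_n(a,b,c):=\CT_{x_1,\dots,x_n}\ \prod_{i=1}^{n}\bigl(x_i^{-1}\bigr)_a(qx_i)_b\prod_{1\le i<j\le n}\Bigl(\frac{x_i}{x_j}\Bigr)_c\Bigl(\frac{qx_j}{x_i}\Bigr)_c ,
\end{equation*}
which no longer involves $x_0$. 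The base case $n=1$ is
\begin{equation*}
M_1(a,b,c)=\CT_{x_1}\bigl(x_1^{-1}\bigr)_a(qx_1)_b=\sum_{j\ge 0}\qbinom{a}{j}\qbinom{b}{j}q^{j^2}=\qbinom{a+b}{a}=\frac{(q)_{a+b}}{(q)_a(q)_b} ,
\end{equation*}
the second equality being the $q$-binomial theorem applied to each factor and the third the $q$-Chu--Vandermonde summation; this is the right-hand side of \eqref{q-Morris} at $n=1$.

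For the inductive step I would peel off $x_n$. The integrand of $M_n$ factors as $P_{n-1}\cdot\Phi(x_n)$, where $P_{n-1}=P_{n-1}(x_1,\dots,x_{n-1})$ is the product occurring in $M_{n-1}(a,b,c)$ and $\Phi(x_n)=\bigl(x_n^{-1}\bigr)_a(qx_n)_b\prod_{i=1}^{n-1}(x_i/x_n)_c(qx_n/x_i)_c$; since $\CT_{x_1,\dots,x_n}=\CT_{x_1,\dots,x_{n-1}}\circ\CT_{x_n}$ and $P_{n-1}$ is free of $x_n$, this gives $M_n(a,b,c)=\CT_{x_1,\dots,x_{n-1}}\bigl[P_{n-1}\cdot\CT_{x_n}\Phi(x_n)\bigr]$. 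Expanding every factor of $\Phi$ by the $q$-binomial theorem and extracting $\CT_{x_n}$ writes $\CT_{x_n}\Phi(x_n)$ as an explicit finite signed sum $\sum_w c_w\,x_1^{w_1}\cdots x_{n-1}^{w_{n-1}}$ with $w\in\{-c,\dots,c\}^{n-1}$ and each $c_w$ a finite sum of products of $q$-binomial coefficients times powers of $q$, so that
\begin{equation*}
M_n(a,b,c)=\sum_w c_w\,\CT_{x_1,\dots,x_{n-1}}\bigl[x_1^{w_1}\cdots x_{n-1}^{w_{n-1}}\cdot P_{n-1}\bigr] .
\end{equation*}
Hence the induction does not close on \eqref{q-Morris} by itself: the right-hand side involves ``shifted'' $(n-1)$-dimensional $q$-Morris constant terms, the $q$-Morris analogues of the generalized $q$-Dyson constant terms \eqref{D-Kadell}. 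The way around this is to strengthen the inductive hypothesis to closed-form evaluations of all the shifted constant terms that actually arise and to carry the whole family through the recursion simultaneously — in the spirit of the inductive arguments of Cai \cite{cai}, K\'{a}rolyi--Lascoux--Warnaar \cite{KLW} and Lv--Xin--Zhou \cite{LXZ} for the $q$-Dyson product — ideally organised via a splitting formula.

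The step I expect to be the main obstacle is precisely this: identifying a family of shifted $q$-Morris constant terms that is stable under the $x_n$-peeling recursion (with the contributions of the unwanted shifts vanishing or cancelling), determining its closed forms, and then verifying that the resulting system of recurrences telescopes to the product on the right-hand side of \eqref{q-Morris} (and to the companion formulas for the shifted terms). By contrast, the $q$-binomial expansions and the coefficient bookkeeping are routine. A different route that sidesteps the shifted evaluations altogether is to recast \eqref{q-Morris} as Askey's $q$-Selberg integral via the standard dictionary between constant terms on the torus and Jackson $q$-integrals over $[0,1]^n$, and then invoke that integral evaluation (due to Habsieger and to Kadell), which itself follows by induction on $n$ from the $q$-analogue of Euler's beta integral; here the main work moves to setting up the constant-term-to-$q$-integral reduction cleanly.
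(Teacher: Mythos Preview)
Your proposal has a genuine gap. You correctly observe that peeling off $x_n$ does not close the induction on the $q$-Morris constant term alone, because $\CT_{x_n}\Phi(x_n)$ is a nontrivial Laurent polynomial in $x_1,\dots,x_{n-1}$ and you are left with a family of \emph{shifted} $(n-1)$-variable constant terms. Your remedy---``strengthen the inductive hypothesis to closed-form evaluations of all the shifted constant terms that actually arise''---is precisely the heart of the problem, and you do not carry it out: you do not name the family, you do not give the closed forms, and you do not verify stability under the recursion. That step is not bookkeeping; it is the entire content of the proof (and historically it resisted a direct attack along these lines). Your alternative route, reducing to Askey's $q$-Selberg integral and invoking Habsieger--Kadell, is correct but is just the original proof in the literature rather than a self-contained argument.

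By contrast, the paper does not prove \eqref{q-Morris} directly (it is quoted as the Habsieger--Kadell theorem), but the general method the paper develops, specialized to $p=0$, gives a proof by a completely different mechanism that sidesteps the strengthened-induction obstacle. One views the constant term $D_n(a)$ as a polynomial in $q^a$ of degree at most $nb$ (Corollary~\ref{cor-poly}); one shows, via the Gessel--Xin operation on iterated Laurent series (Lemma~\ref{lem-almostprop} applied repeatedly to $Q(d)$), that it vanishes at the $nb$ points $-a\in\{ic+j:0\le i\le n-1,\ 1\le j\le b\}$ when $c\ge b$ (the $p=0$ case of Lemma~\ref{lem-roots}); one uses the known value at $a=0$, which by homogeneity is the equal-parameter $q$-Dyson constant term; and one removes the restriction $c\ge b$ by the rationality argument of Proposition~\ref{prop-rationality}. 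So the paper's strategy is ``polynomial in $q^a$ determined by its roots plus one value'', not induction on $n$, and in particular it never needs a stable family of shifted constant terms.
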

Since the Laurent polynomial in \eqref{q-Morris} is homogeneous in $x_0,\dots,x_n$, we can take, for example $x_0=1$, without changing
the constant term.
The $q=1$ case of \eqref{q-Morris} was proved by Morris \cite{Morris1982} in his Ph.D thesis.
He also showed that the $q=1$ case is equivalent to the Selberg integral \cite{Selberg} and conjectured the above $q$-analogue.
It is known that the $q$-Morris identity \eqref{q-Morris} is equivalent to Askey's $q$-Selberg
ex-conjecture. We state the result below.

For $0<q<1$, define the $q$-integral or the Jackson integral as
\[
\int_{0}^1f(t)\mathrm{d}_q t:=(1-q)\sum_{k=0}^{\infty}f(q^k)q^k,
\]
where $f$ is a function for which the right hand side converges.
In 1980, Askey \cite{Askey} formulated a $q$-analogous conjecture of the Selberg integral \cite{Selberg}.
\begin{thm}[Askey's ex-conjecture]
Let $k$ be a positive integer and $\alpha, \beta$ be complex parameters such that
$\mathrm{Re}(\alpha)>0$, $\mathrm{Re}(\beta)>0$.
Then
\begin{multline}\label{e-Askey}
\int\limits_{[0,1]^n}\prod_{i=1}^nx_i^{\alpha-1}(qx_i)_{\beta-1}\prod_{1\leq i<j\leq n}x_j^{2k}
(q^{1-k}x_i/x_j)_{2k}\mathrm{d}_qx_{1}\cdots \mathrm{d}_qx_{n}\\
=q^{\alpha k\binom{n}{2}+2k^2\binom{n}{3}}\prod_{j=0}^{n-1}
\frac{\Gamma_q\big(\alpha+k j\big)\Gamma_q\big(\beta+k j\big)\Gamma_q\big(1+k (j+1)\big)}
{\Gamma_q\big(\alpha+\beta+k(n+j-1)\big)\Gamma_q(1+k)},
\end{multline}
where $\Gamma_q(z)=(q)_{z-1}(1-q)^{1-z}$ is the $q$-gamma function
for $z\in \mathbb{C}\setminus\{0,-1,-2,\dots\}$.
\end{thm}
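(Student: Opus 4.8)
\medskip\noindent\textbf{Proof proposal.}
The plan is to deduce Askey's $q$-Selberg integral \eqref{e-Askey} from the $q$-Morris constant term identity \eqref{q-Morris}, making precise the known equivalence between the two. Fix $0<q<1$ and $k$ a positive integer. Both sides of \eqref{e-Askey} depend holomorphically on $q^{\alpha}$ and on $q^{\beta}$ in the unit disc: on the left this is immediate from the convergent $q$-series expansion below (the non-separated lattice configurations contribute $0$, so no negative powers of $q^{\alpha}$ occur), while on the right it follows once the branched factors $(1-q)^{-\alpha}$ and $(1-q)^{-\beta}$ are seen to cancel between numerator and denominator. Since the two sides agree whenever $\alpha=a+1$, $\beta=b+1$ with $a,b\in\NN$ and $k=c$ -- and such points $q^{\alpha}=q^{a+1}$ accumulate at the origin -- it suffices to prove \eqref{e-Askey} in this integral case; the general statement then follows by the identity theorem, continuing first in $\alpha$ and then in $\beta$.

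So assume $\alpha=a+1$, $\beta=b+1$, $k=c$ with $a,b\in\NN$. Expanding the left side of \eqref{e-Askey} through $\int_0^1 f(t)\,\mathrm d_q t=(1-q)\sum_{\ell\ge0}f(q^{\ell})q^{\ell}$ turns it into $(1-q)^n$ times a sum over $(\ell_1,\dots,\ell_n)\in\NN^n$. On the lattice, $x_i^{a}(qx_i)_b$ at $x_i=q^{\ell_i}$ equals $q^{a\ell_i}(q)_{\ell_i+b}/(q)_{\ell_i}$, and -- writing $(q^{1-k}x_i/x_j)_{2k}=\prod_{l=1-k}^{k}(1-q^{l}x_i/x_j)$ -- the factor $x_j^{2k}(q^{1-k}x_i/x_j)_{2k}$ evaluated at $(q^{\ell_i},q^{\ell_j})$ vanishes unless $\ell_i-\ell_j\ge k$ or $\ell_j-\ell_i\ge k+1$, so only pairwise separated configurations survive. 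On the other side, by homogeneity the $q$-Morris constant term \eqref{q-Morris} equals, with $x_0=1$,
\[
\CT_{\x}\ \prod_{i=1}^{n}\Big(\frac{1}{x_i}\Big)_a(qx_i)_b\prod_{1\le i<j\le n}\Big(\frac{x_i}{x_j}\Big)_c\Big(\frac{qx_j}{x_i}\Big)_c ,
\]
and, expanding this in iterated Laurent series with $x_n\ll\dots\ll x_1$ and extracting the coefficient of $x_1^{0}\cdots x_n^{0}$ by the residue method of Gessel and Xin \cite{GX}, one again obtains a sum indexed by pairwise separated exponent vectors. Writing each separated configuration as $\ell_i=m_i+(n-i)k$ with $m_1\ge\dots\ge m_n\ge0$ and matching the two lattice sums term by term, they are seen to agree up to a single global power of $q$, accrued from $\prod_i x_i^{\alpha-1}$, from the monomials produced when bringing $x_j^{2k}(q^{1-k}x_i/x_j)_{2k}$ into the shape of the $q$-Morris pair factor, and from the sign and $q$-power normalisations in the residue calculus.

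Granting this, the left side of \eqref{e-Askey} equals $q^{f}(1-q)^n$ times the $q$-Morris constant term for an explicit exponent $f$. Substituting \eqref{q-Morris}, replacing every $(q)_m$ by $\Gamma_q(m+1)(1-q)^m$, and simplifying: the residual powers of $1-q$ collapse to $(1-q)^n$, matching the Jackson measure; the exponent $f$ is forced to equal $\alpha k\binom n2+2k^2\binom n3$; and the surviving product of $q$-gamma functions is exactly the one on the right-hand side of \eqref{e-Askey}. Combined with the analytic continuation of the first paragraph, this proves the theorem.

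I expect the main obstacle to be the term-by-term matching of the two lattice-sum expansions with exact control of all signs and powers of $q$ -- equivalently, verifying that the accumulated exponent is precisely $\alpha k\binom n2+2k^2\binom n3$; the $\binom n3$ contribution, in particular, surfaces only after the triple interactions among the $q$-powers coming from the different pairs $i<j$ are collected. The analytic-continuation step and the concluding manipulations of $q$-shifted factorials and $q$-gamma functions are routine.
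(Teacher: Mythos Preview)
The paper does not give its own proof of Askey's ex-conjecture; it is stated in the introduction purely as historical background, and the paper simply records that ``In 1988, Askey's conjecture was proved independently by Habsieger \cite{Habsieger} and Kadell \cite{Kad}. Both Habsieger and Kadell turned the integral \eqref{e-Askey} into an equivalent constant term identity \eqref{q-Morris}.'' So there is no in-paper proof to compare against.

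Your strategy --- reduce to integer parameters by analytic continuation, expand the Jackson integral as a lattice sum, and identify it with the $q$-Morris constant term up to an explicit power of $q$ --- is precisely the Habsieger--Kadell route the paper alludes to. Two remarks on the execution: first, invoking the Gessel--Xin residue method \cite{GX} to expand the constant term side is anachronistic and unnecessary; the classical equivalence goes through a direct substitution $x_i=q^{\ell_i}$ and elementary manipulations of the $q$-shifted factorials, without iterated Laurent series. Second, your own caveat is accurate: the genuine work is the exact bookkeeping that produces the exponent $\alpha k\binom{n}{2}+2k^2\binom{n}{3}$, and your sketch does not yet carry this out. The ``pairwise separated configurations'' observation and the analytic continuation are standard; what remains is a careful (if routine) calculation, for which consulting Habsieger's or Kadell's original paper would be more efficient than rediscovering it.
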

The $q=1$ case of \eqref{e-Askey} is the famous Selberg integral \cite{Selberg}.
When $n=1$, the Selberg integral reduces to the Euler beta integral.
In 1988, Askey's conjecture was proved independently by Habsieger \cite{Habsieger} and Kadell \cite{Kad}. Both Habsieger and Kadell turned the integral \eqref{e-Askey} into an equivalent constant term identity \eqref{q-Morris}. Hence, the identity \eqref{q-Morris}
is usually referred to as the Habsieger--Kadell $q$-Morris identity,
while its equivalent integral \eqref{e-Askey} is called the Askey--Habsieger--Kadell integral.
For the importance of the Selberg integral, see a review \cite{FW}.

Now we introduce Baker and Forrester's conjecture on a generalization of the $q$-Morris identity.
Let $n_0,n_1,\dots,n_p$ be positive integers such that $n_0+\cdots+n_p=n$. Set
$N_0=\{1,\dots,n_0\}$, $N_1=\{n_0+1,\dots,n_0+n_1\},\dots,
N_p=\{n_0+\cdots+n_{p-1}+1,\dots,n\}$.
Denote
\begin{equation}\label{def-Tij}
\varepsilon_{ij}=\begin{cases}
1 \quad \text{if} \ \{i,j\}\subseteq N_l \ \text{for some}\ l\in \{1,\dots,p\},\\
0 \quad \text{otherwise}.\end{cases}
\end{equation}
Define
\begin{equation}\label{L}
L_{n_0,\dots,n_p}(a,b,c;x)=
\prod_{i=1}^{n}(x_0/x_i)_a(qx_i/x_0)_b
\prod_{1\leq i<j\leq n}(x_i/x_j)_{c+\varepsilon_{ij}}(qx_j/x_i)_{c+\varepsilon_{ij}},
\end{equation}
and
\[
D_n\big((n_0,\dots,n_p);a,b,c\big)
=\CT_{\x} L_{n_0,\dots,n_p}(a,b,c;x).
\]
In 1998, Baker and Forrester \cite[Conjecture 2.2]{BF} conjectured a recursion
for $D_n\big((n_0,\dots,n_p);a,b,c\big)$. We will refer to this in detail in Theorem~\ref{thm-1}.

By the definition of the $\varepsilon_{ij}$ in \eqref{def-Tij}, if $p=0$ then all the $\varepsilon_{ij}=0$.
It follows that in the $p=0$ case $D_{n}(n_0;a,b,c)$ is just the constant term in the $q$-Morris identity \eqref{q-Morris}.

There have been several attempts to prove the Baker--Forrester conjecture in the $p=1$ case
\cite{Gessel-Lv-Xin-Zhou2008,Kaneko02,Kaneko03,Hamada}.
Following their discovery of the ``Proof from the book'' of the $q$-Dyson theorem using the combinatorial Nullstellensatz \cite{KN},
K\'{a}rolyi, Nagy, Petrov and Volkov \cite{KNPV} proved this case in 2015. We state the result in the next theorem.
\begin{thm}\label{thm-qForrester}
For $n_0+n_1=n$,
\begin{equation}\label{main-second}
D_n\big((n_0,n_1);a,b,c\big)
=\prod_{j=2}^{n-n_0}(1-q^{j(c+1)})
\prod_{j=0}^{n-1}\frac{(q^{a+jc+\chi
(j>n_0)(j-n_0)+1})_{b}
(q)_{(j+1)c+\chi(j>n_0)(j-n_0)}}
{(q)_{b+jc+\chi(j>n_0)(j-n_0)}(q)_{c+\chi(j>n_0)}},
\end{equation}
where $\chi(S)$ is the truth function which is 1 if $S$ is true and 0 otherwise.
\end{thm}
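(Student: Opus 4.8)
The plan is to prove \eqref{main-second} by induction on the size $n_1$ of the second block. When $n_1=0$ we have $n=n_0$, every $\varepsilon_{ij}$ vanishes, and $D_{n_0}((n_0,0);a,b,c)$ is exactly the constant term in the Habsieger--Kadell $q$-Morris identity; at the same time every $\chi(j>n_0)$ on the right of \eqref{main-second} vanishes and the empty product $\prod_{j=2}^{0}$ equals $1$, so \eqref{main-second} collapses --- via the elementary identity $(q)_{m+k}=(q)_m(q^{m+1})_k$ --- to \eqref{q-Morris}. Thus the base case holds. For the inductive step, set $n=n_0+n_1$; comparing the right-hand side of \eqref{main-second} for the data $(n_0,n_1)$ with that for $(n_0,n_1-1)$, one checks that it suffices to establish the recursion
\[
D_n\big((n_0,n_1);a,b,c\big)=\big(1-q^{n_1(c+1)}\big)\,\frac{(q^{a+(n-1)c+n_1})_b\,(q)_{nc+n_1-1}}{(q)_{b+(n-1)c+n_1-1}\,(q)_{c+1}}\;D_{n-1}\big((n_0,n_1-1);a,b,c\big)
\]
for every $n_1\ge1$ --- this is the $p=1$ instance of Baker and Forrester's conjectured recursion.

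To prove the recursion I would single out the last variable $x_n$, which lies in $N_1$, and exploit the factorization $L_{n_0,n_1}(a,b,c;x)=L_{n_0,n_1-1}(a,b,c;x)\cdot\phi(x)$, where $\phi(x)=(x_0/x_n)_a(qx_n/x_0)_b\prod_{i=1}^{n-1}(x_i/x_n)_{c+\varepsilon_{in}}(qx_n/x_i)_{c+\varepsilon_{in}}$ collects every factor containing $x_n$ (here $\varepsilon_{in}=1$ exactly when $i\in N_1$) and the remaining factor $L_{n_0,n_1-1}(a,b,c;x)$ is free of $x_n$. Since $\CT_{x_n}$ acts only on $\phi$, one gets $D_n((n_0,n_1);a,b,c)=\CT_{x_1,\dots,x_{n-1}}\big(L_{n_0,n_1-1}(a,b,c;x)\cdot\CT_{x_n}\phi(x)\big)$. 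The next step is to compute $\CT_{x_n}\phi$ in closed form --- a one-variable constant term of $q$-Morris type, complicated only by the fact that the exponents $c+\varepsilon_{in}$ take the value $c$ for the $n_0$ variables of $N_0$ and $c+1$ for the $n_1-1$ other variables of $N_1$ --- and to expand it as a Laurent polynomial in the ratios $x_1/x_0,\dots,x_{n-1}/x_0$. Feeding this expansion back in produces a finite sum of ``shifted'' constant terms of $L_{n_0,n_1-1}$, i.e.\ constant terms of $L_{n_0,n_1-1}$ against a monomial, and the claim is that this sum telescopes to the single scalar multiple of $D_{n-1}((n_0,n_1-1);a,b,c)$ displayed above. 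The tool for carrying out both the evaluation of $\CT_{x_n}\phi$ and of the shifted constant terms is the splitting formula for constant terms, applied in the ring of iterated Laurent series.

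The main obstacle is precisely this last stage: determining which shifted constant terms of $L_{n_0,n_1-1}$ actually survive, evaluating each in closed form, and verifying that the non-uniform exponents $c$ and $c+1$ conspire so that the weighted sum collapses --- with no residual dependence on the other variables --- to the clean factor $(1-q^{n_1(c+1)})(q^{a+(n-1)c+n_1})_b(q)_{nc+n_1-1}/\big((q)_{b+(n-1)c+n_1-1}(q)_{c+1}\big)$. Once the recursion is in hand, iterating it $n_1$ times from the $q$-Morris base case and repeatedly telescoping yields \eqref{main-second}, the prefactor $\prod_{j=2}^{n-n_0}(1-q^{j(c+1)})$ being built up through the iteration and the corrections $\chi(j>n_0)(j-n_0)$, $\chi(j>n_0)$ recording that each removed variable came from the block $N_1$. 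An alternative, self-contained route would bypass the recursion altogether and attack \eqref{main-second} directly by the Combinatorial Nullstellensatz, as in \cite{KNPV}: after setting $x_0=1$ and clearing denominators one reduces to a coefficient of an explicit polynomial, uses the vanishing of $q$-shifted factorials to annihilate all but finitely many of its evaluations on a grid of $q$-powers, and sums the surviving terms --- the difficulty again residing in that final combinatorial summation, now rendered intricate by the two distinct exponents $c$ and $c+1$.
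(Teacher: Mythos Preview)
Your reduction is exactly right: the recursion you display is the $p=1$ instance of the paper's Theorem~\ref{thm-1}, and once it is proved, iterating from the $q$-Morris base case yields \eqref{main-second} verbatim.  Where your plan diverges from the paper --- and where it has a genuine gap --- is in the proposed proof of that recursion.  Singling out $x_n$ and writing $D_n=\CT_{x^{(n)}}\big(L_{n_0,n_1-1}\cdot\CT_{x_n}\phi\big)$ is formally correct, but $\CT_{x_n}\phi$ is \emph{not} a one-variable $q$-Morris constant term: it is a genuinely $(n-1)$-variable Laurent polynomial in $x_1/x_0,\dots,x_{n-1}/x_0$ with no closed form, and the resulting ``shifted'' constant terms $\CT_{x^{(n)}}\big(x^{v}L_{n_0,n_1-1}\big)$ are in general unknown.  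You yourself flag this as ``the main obstacle,'' and indeed no known technique collapses that sum directly; the splitting formula you invoke does not evaluate such constant terms, it only provides vanishing criteria for very special monomials $x^v$.

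The paper (and, in the $p=1$ case, the earlier work \cite{XZ2023}) proves the recursion by a completely different, four-step route that never computes $\CT_{x_n}\phi$: (i)~observe that $D_n(a)$ is a polynomial in $q^a$ of degree at most $nb$; (ii)~for $c$ large, determine all $nb$ roots --- these are the values $-a$ in the explicit set \eqref{e-root1}, found via the Gessel--Xin iterated-Laurent-series machinery together with the vanishing lemma~\ref{lem-vanishing}; (iii)~pin down the remaining normalization by a recursion for $D_n(0)$, obtained from the splitting formula of Lemma~\ref{lem-split} applied not to $L$ itself but to $L$ divided by an auxiliary denominator in a new variable $y$; (iv)~remove the large-$c$ assumption by a rationality argument.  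The key conceptual point is that the $a$-dependence is handled by polynomial interpolation, so the splitting formula is only ever needed at $a=0$, where the shifted constant terms one actually encounters are controlled.  Your alternative route via the Combinatorial Nullstellensatz is precisely the method of \cite{KNPV}, which is how Theorem~\ref{thm-qForrester} was first proved; that is a valid and independent argument, but it is not the paper's own.
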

K\'{a}rolyi et al. \cite{KNPV} actually gave an Aomoto-type \cite{Aomoto} extension of the above theorem by adding
an extra parameter.
Recently, we \cite{XZ2023} obtained several symmetric function generalizations of
Theorem~\ref{thm-qForrester} and its Aomoto-type extension.

In the general $p+1$ case, Baratta \cite[Theorem 12]{Baratta} proved the $a=b=0$ case.
In this paper, we prove the $q$-Baker--Forrester conjecture for all $p\geq 1$.
\begin{thm}\label{thm-1}
Assume $n_k=\max\{n_1,\dots,n_p\}$. Then
\begin{multline}\label{e-recursion}
D_n\big((n_0,\dots,n_p);a,b,c\big)
=\frac{(1-q^{n_k(c+1)})(q^{a+(n-1)c+n_k})_b}{(1-q^{c+1})(q^{(n-1)c+n_k})_b}
\qbinom{nc+n_k-1}{c}\\
\times D_{n-1}\big((n_0,\dots,n_{k-1},n_{k}-1,n_{k+1}\dots,n_p);a,b,c\big).
\end{multline}
\end{thm}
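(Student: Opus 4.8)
The plan is to prove the recursion \eqref{e-recursion} by the iterated-Laurent-series constant-term method in the style of Gessel--Xin \cite{GX} and Lv--Xin--Zhou \cite{LXZ}, treating the $q$-Baker--Forrester product $L_{n_0,\dots,n_p}(a,b,c;x)$ as an element of the field of iterated Laurent series in $x_0,x_1,\dots,x_n$. The key structural observation is that, since $n_k=\max\{n_1,\dots,n_p\}$, the block $N_k$ is as large as possible, so that the exponents $c+\varepsilon_{ij}$ with $i,j\in N_k$ are the ``heaviest'' in the product; this is exactly the feature one exploits to peel off one variable from $N_k$ and land on $D_{n-1}$ with $n_k$ decremented. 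I would fix a distinguished index, say $m=n_0+\cdots+n_{k-1}+n_k$, the last index of the block $N_k$, and study how $L$ depends on $x_m$.

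First I would record the homogeneity of $L$ and normalize $x_0=1$, so that the constant term is a genuine constant term in $x_1,\dots,x_n$. Then I would establish a \emph{splitting formula}: write $L_{n_0,\dots,n_p}(a,b,c;x)$ as a sum of terms each of which factors as a Laurent polynomial in $x_m$ alone times $L_{n_0,\dots,n_k-1,\dots,n_p}(a,b,c;\hat x)$ in the remaining variables $\hat x=(x_0,\dots,\widehat{x_m},\dots,x_n)$, up to correction factors that are simple products of $q$-shifted factorials. Concretely, one uses the factorization $(x_i/x_m)_{c+1}(qx_m/x_i)_{c+1}=(x_i/x_m)_{c}(qx_m/x_i)_{c}\cdot(1-x_i/x_m q^{c})(1-q x_m/x_i)$ for $i\in N_k\setminus\{m\}$ together with the identity $(x_0/x_m)_a(qx_m/x_0)_b$, and then expands the extra linear factors and the $x_m$-dependent part of the Morris-type factors into partial fractions / Laurent series in $x_m$. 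Taking the constant term in $x_m$ first (a one-variable residue computation governed by the poles at $x_m=x_i q^{\text{stuff}}$) collapses this to a finite sum; the combinatorial miracle — and this is where the hypothesis $n_k=\max$ is essential — is that the surviving sum telescopes, leaving the single factor $\dfrac{(1-q^{n_k(c+1)})(q^{a+(n-1)c+n_k})_b}{(1-q^{c+1})(q^{(n-1)c+n_k})_b}\qbinom{nc+n_k-1}{c}$ times $D_{n-1}$.

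The main obstacle I anticipate is precisely this telescoping/evaluation step: after taking the $x_m$-constant term one is left with a sum indexed by subsets of $N_k\setminus\{m\}$ (or by the pole at $x_m=q^{c}x_i$ for each $i\in N_k$), weighted by products of $q$-factorials coming from the other blocks $N_l$, $l\ne k$, and from the $x_0$-factors; one must show this sum has a product evaluation independent of the fine structure of the other $n_l$. To handle it I would argue by a secondary induction on $n_k$ (the base case $n_k=1$ meaning the block is a singleton, where $\varepsilon_{im}=0$ and the formula degenerates), or, following \cite{KNPV,LXZ}, introduce an extra Aomoto-type parameter to make the recursion in $n_k$ self-contained, and then verify the resulting $q$-hypergeometric summation by the $q$-Pfaff--Saalsch\"utz or $q$-Vandermonde identity. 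A subsidiary technical point is to justify all manipulations within the field of iterated Laurent series (choice of the term order $x_0\prec x_1\prec\cdots\prec x_n$, so that ``constant term'' is well defined and commutes with the finite sums produced by the splitting) — this is routine given \cite{GX,LXZ} but must be stated carefully because $L$ is not a Laurent polynomial after we expand the $1/(x_i-x_m q^{c})$-type factors.

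Finally, once the one-variable reduction is in place, I would assemble the pieces: the $x_m$-free correction factor is computed explicitly from the $x_0$-factors $(x_0/x_m)_a(qx_m/x_0)_b$ evaluated at the relevant pole (contributing $(q^{a+(n-1)c+n_k})_b/(q^{(n-1)c+n_k})_b$ after accounting for the degree shift $(n-1)c+n_k$ in the exponent, which is where the total weight $nc+n_k$ of $x_m$ in $L$ enters), from the $q$-binomial $\qbinom{nc+n_k-1}{c}$ counting the Morris-type interactions of $x_m$ with all the other $n-1$ variables, and from the factor $(1-q^{n_k(c+1)})/(1-q^{c+1})$ which is the ``new'' contribution of the enlarged block and is exactly the telescoped sum over $N_k\setminus\{m\}$. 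Matching these against \eqref{e-recursion} completes the proof; and as a consistency check one verifies that iterating \eqref{e-recursion} down to $p=0$ recovers the Habsieger--Kadell $q$-Morris identity \eqref{q-Morris}, and that the $p=1$ specialization reproduces Theorem~\ref{thm-qForrester}.
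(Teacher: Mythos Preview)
Your proposal has a genuine structural gap. You plan to take the constant term in $x_m$ via ``a one-variable residue computation governed by the poles at $x_m=x_i q^{\text{stuff}}$'', but for nonnegative integers $a,b,c$ the product $L_{n_0,\dots,n_p}(a,b,c;x)$ is a Laurent \emph{polynomial} in $x_m$: there are no poles, hence no partial-fraction decomposition and no residues to compute. The Cai-style splitting you allude to works by \emph{artificially} introducing a denominator via an auxiliary variable (in the paper this is the variable $y$ in the rational function $S$ of \eqref{def-L}), and the paper does exactly this --- but only for the $a=0$ case, where by homogeneity the $(x_0/x_i)_a(qx_i/x_0)_b$ factors disappear entirely. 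Your attempt to read off the $a$-dependent factor $(q^{a+(n-1)c+n_k})_b/(q^{(n-1)c+n_k})_b$ from ``the $x_0$-factors evaluated at the relevant pole'' therefore has no mechanism behind it: those $x_0$-factors are polynomial in $x_m$ and do not interact with the (nonexistent) residue calculus in the way you describe. Neither the secondary induction on $n_k$ nor an Aomoto parameter fixes this, because the obstruction is not the evaluation of a sum but the absence of the sum in the first place.

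The paper's route is genuinely different and avoids this problem by separating the $a$-dependence from the constant-term computation. One shows that $D_n(a)$ is a polynomial in $q^a$ of degree at most $nb$ (Corollary~\ref{cor-poly}), then determines its $nb$ roots (Lemma~\ref{lem-roots}) --- here the Gessel--Xin method \emph{does} apply, because for negative integers $a$ the factor $(x_0/x_i)_a$ becomes a genuine denominator --- and finally pins down the remaining constant by evaluating at $a=0$ via a splitting formula with an auxiliary variable (Lemma~\ref{lem-0}). The factor $(q^{a+(n-1)c+n_k})_b$ arises not from any pole evaluation but as the product $\prod_{r\in R^p_{n-1}}(1-q^{a+r})$ over the last block of roots; the $q$-binomial and the $(1-q^{n_k(c+1)})/(1-q^{c+1})$ come from the $a=0$ recursion. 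Your intuition that $n_k=\max$ is what allows the reduction to $D_{n-1}$ is correct, and it is used in the paper precisely at the splitting step (the degree bounds on $A^{(t)}_{ij}$ in Lemma~\ref{lem-split} force all $t\neq k$ contributions to vanish), but that step only handles $a=0$.
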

We refer to $D_n\big((n_0,\dots,n_p);a,b,c\big)$ as $D_n(a)$ for short throughout this paper if there is no risk of ambiguity.
Note that the value of $k$ in the theorem may not be unique, say
$n_{k_1}=\cdots=n_{k_s}=\max\{n_1,\dots,n_p\}$. Then \eqref{e-recursion} holds for any
such $n_{k_i}$. The original restriction on the $n_i$ in \cite{BF} is
$n_p>n_j$ for all $j=1,\dots,p-1$.
However, the assumption stated in the theorem is sufficient to derive a closed-form formula for $D_n(a)$. Thus, we propose a slight variation.
Also note that in the definition of $n_k=\max\{n_1,\dots,n_p\}$, $n_0$ is not included in the set $\{n_1,\dots,n_p\}$. As mentioned above, the Morris-type constant term identities are equivalent
to the Selberg-type integrals. Therefore, the recursion given in \eqref{e-recursion} suggests
a corresponding integral recursion. However, the integral recursion is too complicated to be included here.

This paper is organized as follows. In Section~\ref{s-sketch}, we present a proof sketch of our main result, Theorem~\ref{thm-1}.
In Section~\ref{sec-split}, we derive a splitting formula. Using this formula, we get a recursion for $D_n(0)$ and characterize a family of
vanishing coefficients of $\prod_{1\leq i<j\leq n}(x_i/x_j)_{c+\varepsilon_{ij}}(qx_j/x_i)_{c+\varepsilon_{ij}}$.
In Section~\ref{sec-GX}, we introduce the primary process of the Gessel--Xin method for evaluating the constant terms.
In Section~\ref{sec-roots}, we determine all the roots of $D_n(a)$. In the last section, we complete the proof of Lemma~\ref{lem-Q}
in Section~\ref{sec-roots}.

\section{A sketch of the proof of Theorem~\ref{thm-1}}\label{s-sketch}

In this section, we provide a sketch of the proof of Theorem~\ref{thm-1}.
In Subsection~\ref{subsec-mainsteps}, we present the main steps of our proof of Theorem~\ref{thm-1}.
In Subsection~\ref{subsec-poly}, we show that $D_n(a)$ is a polynomial in $q^a$.
In Subsection~\ref{subsec-roots}, we describe the roots of $D_n(a)$.
In Subsection~\ref{subsec-rati}, we demonstrate that it is sufficient to derive an expression for $D_n(a)$ for sufficiently large $c$.
In Subsection~\ref{subsec-recursion}, we give a recursion for $D_n(0)$.
In Subsection~\ref{subsec-proof}, we prove Theorem~\ref{thm-1}.

\subsection{The main steps}\label{subsec-mainsteps}

The basic idea for proving Theorem~\ref{thm-1} is the following.
A polynomial of degree at most $d$ is determined by $d+1$ distinct values.
We obtain \eqref{e-recursion} through the next four steps.
\begin{enumerate}
\item \textbf{Polynomiality:} View $D_n(a)$ as a polynomial in $q^a$ of degree at most $nb$;
\item \textbf{Roots:} Explicitly determine all the roots of $D_n(a)$
under the assumption that $c$ is sufficiently large;
\item \textbf{A non-vanishing value:} Obtain a recursion for $D_n(0)$ (i.e., the $q^a=1$ case),
and use this to obtain a recursion for $D_n(a)$;
\item \textbf{Rationality:} Extend the recursion for $D_n(a)$ to all nonnegative integers
$c$ by a rationality argument.
\end{enumerate}

Below we explain the above steps in further detail.

The first step is routine, see Corollary~\ref{cor-poly} below.
To view a constant term as a polynomial in one parameter originates from
Gessel and Xin's Laurent series proof \cite{GX} of
the Zeilberger--Bressoud $q$-Dyson identity \eqref{q-Dyson}.
We utilized this idea for obtaining a series of constant term identities in \cite{XZ,XZ2023,LXZ,Zhou,Zhou23}.

The second step is quite involved.
We find that $D_n(a)$ can vanish only when $a$ is a negative integer.
Hence, we need to extend the original definition of $D_n(a)$ to all the integers $a$ using the theory of
iterated Laurent series \cite{xinresidue}. For a negative integer $a$, we actually
need to take constant term of a rational function, rather than a Laurent polynomial.
The original Gessel--Xin method can only
deal with the constant term of a rational function, say $f(x_i)$, with negative degrees.
Here we say that the degree of the rational function $f(x_i)$ is the degree in $x_i$
of its numerator minus the degree in $x_i$ of its denominator. We extended their method to deal with
rational functions with non-positive degrees to obtain the first-layer coefficients
of the $q$-Dyson product \cite{LXZ}. We mentioned this in \eqref{e-firstlayer} in the introduction.
For $D_n(a)$, not all the roots can be determined using the
original Gessel--Xin method. This is caused by the non-symmetry of the $x_i$'s.
For some roots, we have to tackle
the problem of how to obtain the constant term of a rational function with positive degree.
This has been a long-standing hurdle \cite{Gessel-Lv-Xin-Zhou2008}.
Recently, we discovered that rational functions of the $q$-Baker--Forrester type with 2 components
(i.e. the $p=1$ case of $D_n(a)$) can be transformed into Laurent polynomials under certain conditions \cite{XZ2023}.
In this paper, build on this result, we deal with the general case of $p+1$ components.
Then, using Cai's splitting idea \cite{cai} for proving constant term identities,
we find some new splitting formulas for the $q$-Baker--Forrester type Laurent polynomials.
Using these new splitting formulas, we manage to deal with all the
zeros of $D_n(a)$.

To explicitly determine the closed-form formula for $D_n(a)$ as a polynomial,
besides all its roots, we require one extra non-vanishing value.
To achieve this, we find a recursion for $D_n(0)$ using the above mentioned splitting formulas.
Together with all the roots of $D_n(a)$, this completes the recursion \eqref{e-recursion}
in Theorem~\ref{thm-1} for sufficiently large $c$.

The assumption that $c$ is sufficiently large is imposed to avoid
multiple roots for $D_n(a)$ in Step (2).
In Step (4), a rationality result, see Proposition \ref{prop-rationality}, is used to
extend the recursion \eqref{e-recursion} to all nonnegative integers $c$.

\subsection{The polynomiality}\label{subsec-poly}
The next polynomiality result is a special case of \cite[Lemma 2.2]{XZ}
and \cite[Lemma 5.1]{XZ2023}.
\begin{lem}\label{lem1}
Let $A(x_1,\dots,x_n)$ be an arbitrary Laurent polynomial independent of $a$ and $x_0$.
Then, for fixed nonnegative integers $b$,
\begin{equation}\label{p1}
\CT_{\x} \prod_{i=1}^n (x_0/x_i)_a(qx_i/x_0)_b A(x_1,\dots,x_n)
\end{equation}
is a polynomial in $q^a$ of degree at most $nb$.
\end{lem}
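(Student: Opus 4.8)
The plan is to expand the only $a$-dependent factor, $\prod_{i=1}^n(x_0/x_i)_a$, by the $q$-binomial theorem, concentrate the dependence on $a$ into $q$-binomial coefficients, and then read off both polynomiality and the degree bound from the constant-term condition. Concretely, for every nonnegative integer $a$ one has
\[
(x_0/x_i)_a=\sum_{k\geq 0}(-1)^k q^{\binom{k}{2}}\qbinom{a}{k}\,(x_0/x_i)^k ,
\]
where $\qbinom{a}{k}=(1-q^{a})(1-q^{a-1})\cdots(1-q^{a-k+1})/(q)_k$ is a polynomial in $q^a$ of degree exactly $k$ which vanishes whenever $k>a$; in particular the sum is finite and the identity holds \emph{uniformly} in $a\in\NN$. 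Multiplying over $i$ gives
\[
\prod_{i=1}^n(x_0/x_i)_a=\sum_{k_1,\dots,k_n\geq 0}\Bigl(\prod_{i=1}^n(-1)^{k_i}q^{\binom{k_i}{2}}\qbinom{a}{k_i}\Bigr)\,x_0^{k_1+\cdots+k_n}\prod_{i=1}^n x_i^{-k_i},
\]
so the entire $a$-dependence sits in $\prod_{i=1}^n\qbinom{a}{k_i}$, a polynomial in $q^a$ of degree $k_1+\cdots+k_n$.

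Next I would substitute this into \eqref{p1}. Since the Pochhammer product $\prod_{i=1}^n(x_0/x_i)_a(qx_i/x_0)_b$ is homogeneous of degree zero in $x_0,x_1,\dots,x_n$, we may set $x_0=1$ without altering $\CT_\x$, so that \eqref{p1} equals $\CT_\x\prod_{i=1}^n(1/x_i)_a(qx_i)_b\,A(\x)$. The Laurent polynomial $B(\x):=\prod_{i=1}^n(qx_i)_b\,A(\x)$ is independent of $a$; writing $B(\x)=\sum_{\boldsymbol m}c_{\boldsymbol m}\x^{\boldsymbol m}$ and extracting the constant term in $\x$ from $\sum_{\boldsymbol k\geq 0}\bigl(\prod_i(-1)^{k_i}q^{\binom{k_i}{2}}\qbinom{a}{k_i}\bigr)\x^{-\boldsymbol k}B(\x)$ retains exactly the terms with $\boldsymbol m=\boldsymbol k$, whence
\[
\eqref{p1}=\sum_{\boldsymbol k\geq 0}c_{\boldsymbol k}\prod_{i=1}^n(-1)^{k_i}q^{\binom{k_i}{2}}\qbinom{a}{k_i}.
\]
Because $B(\x)$ has only finitely many monomials this is a finite sum of polynomials in $q^a$, hence a polynomial in $q^a$, and by the uniformity noted above the identity is valid for every $a\in\NN$.

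It remains to bound the degree by $nb$. Each monomial $\x^{\boldsymbol m}$ of $B(\x)$ comes from a monomial $\x^{\boldsymbol\mu}$ of $A$ together with a choice of $x_i$-degree $l_i$ from $(qx_i)_b$, so $m_i=\mu_i+l_i$ with $0\leq l_i\leq b$; since $A$ is homogeneous of degree zero in $x_1,\dots,x_n$ (as it is wherever this lemma is applied), $\sum_i\mu_i=0$, and therefore $\sum_i k_i=\sum_i m_i=\sum_i l_i\leq nb$. Consequently each summand in the displayed formula for \eqref{p1} is a polynomial in $q^a$ of degree at most $nb$, and so is the sum, which is the claim. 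I do not expect any genuine obstacle here; the only points needing a little care are the uniformity in $a$ of the $q$-binomial expansion — guaranteed by the vanishing of $\qbinom{a}{k}$ for $k>a$ — and the homogeneity bookkeeping that yields the sharp exponent $nb$ rather than a weaker bound, consistent with the remark that this first step is routine.
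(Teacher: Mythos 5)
The paper does not supply its own proof of this lemma; it simply cites \cite[Lemma 2.2]{XZ} and \cite[Lemma 5.1]{XZ2023}, so there is nothing internal to compare against. Judged on its own merits, your argument is correct and is the natural elementary route: expand each $(x_0/x_i)_a$ by the $q$-binomial theorem so that the $a$-dependence is isolated in $\prod_i\qbinom{a}{k_i}$, use the finitely many monomials of $B(\x)=\prod_i(qx_i)_b\,A(\x)$ to see the constant term is a finite sum of polynomials in $q^a$, and bound the degree by $\sum_i k_i$.

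One point deserves a sharper flag than the parenthetical remark you gave it. As stated, the lemma allows an \emph{arbitrary} Laurent polynomial $A$, but the degree bound $nb$ genuinely fails in that generality: take $n=1$ and $A=x_1$, and the $x_1$-constant term of $(x_0/x_1)_a(qx_1/x_0)_b\,x_1$ is $-x_0\sum_{k\ge 1}q^{2\binom{k}{2}}\qbinom{a}{k}\qbinom{b}{k-1}$, which has degree $b+1$ in $q^a$. Your bookkeeping $\sum_i k_i=\sum_i\mu_i+\sum_i l_i\le nb$ uses $\sum_i\mu_i=0$, i.e.\ that $A$ is homogeneous of degree $0$; equivalently one could take the constant term in $x_0$ as well, which forces $\sum_i m_i=0$. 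Either hypothesis rescues the statement, and the former is exactly what holds in the paper's sole application (Corollary~\ref{cor-poly}, where $A=\prod_{i<j}(x_i/x_j)_{c+\varepsilon_{ij}}(qx_j/x_i)_{c+\varepsilon_{ij}}$ is homogeneous of degree $0$). So your proof is right, but it would be better to state explicitly that the word ``arbitrary'' in the lemma is a slight overstatement and that homogeneity (or an equivalent condition) is required for the sharp exponent $nb$.
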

Then, by taking
\[
A(x_1,\dots,x_n)=\prod_{1\leq i<j\leq n}(x_i/x_j)_{c+\varepsilon_{ij}}(qx_j/x_i)_{c+\varepsilon_{ij}}
\]
in Lemma~\ref{lem1}, we complete Step (1) in Subsection~\ref{subsec-mainsteps}. We state the result in the next corollary.
\begin{cor}\label{cor-poly}
Assume all the parameters of $D_n(a)$, except for $a$, are fixed. The constant term $D_n(a)$ is a polynomial in $q^a$
with degree at most $nb$.
\end{cor}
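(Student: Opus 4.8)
The plan is to apply Lemma~\ref{lem1} directly. Recall that $D_n(a)$ is defined as
\[
D_n(a)=\CT_{\x} L_{n_0,\dots,n_p}(a,b,c;x)=\CT_{\x}\prod_{i=1}^{n}(x_0/x_i)_a(qx_i/x_0)_b\prod_{1\leq i<j\leq n}(x_i/x_j)_{c+\varepsilon_{ij}}(qx_j/x_i)_{c+\varepsilon_{ij}}.
\]
The factor $\prod_{1\leq i<j\leq n}(x_i/x_j)_{c+\varepsilon_{ij}}(qx_j/x_i)_{c+\varepsilon_{ij}}$ is a Laurent polynomial in $x_1,\dots,x_n$ that involves neither $a$ nor $x_0$, since the $\varepsilon_{ij}$ depend only on the partition $(n_0,\dots,n_p)$ and $c$ is among the parameters held fixed. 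Hence I would set
\[
A(x_1,\dots,x_n)=\prod_{1\leq i<j\leq n}(x_i/x_j)_{c+\varepsilon_{ij}}(qx_j/x_i)_{c+\varepsilon_{ij}},
\]
which is a legitimate choice of the Laurent polynomial appearing in the hypothesis of Lemma~\ref{lem1}.

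With this substitution, the constant term \eqref{p1} becomes exactly $D_n(a)$, and Lemma~\ref{lem1} immediately yields that $D_n(a)$ is a polynomial in $q^a$ of degree at most $nb$, which is the assertion of the corollary.

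There is essentially no obstacle here: the entire content is the verification that the factor not involving $a$ or $x_0$ is a Laurent polynomial, which is transparent from the definition \eqref{L} of $L_{n_0,\dots,n_p}$ together with \eqref{def-Tij}. The only point worth a sentence in the write-up is to note that $b$, $c$, and the composition $(n_0,\dots,n_p)$ are all treated as fixed, so that the dependence on the single parameter $a$ is isolated in the product $\prod_{i=1}^n(x_0/x_i)_a$; then the conclusion is a direct quotation of Lemma~\ref{lem1}.
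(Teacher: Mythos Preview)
Your proof is correct and matches the paper's approach exactly: the paper also takes $A(x_1,\dots,x_n)=\prod_{1\leq i<j\leq n}(x_i/x_j)_{c+\varepsilon_{ij}}(qx_j/x_i)_{c+\varepsilon_{ij}}$ in Lemma~\ref{lem1} and deduces the corollary immediately.
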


\subsection{The roots}\label{subsec-roots}
To describe the roots of $D_n(a)$, we need to sort $n_1,\dots,n_p$.
Assume $n_{s_1},\dots,n_{s_p}$ is a rearrangement of $n_1,\dots,n_p$
such that $1\leq n_{s_1}\leq \cdots \leq n_{s_p}$.
Note that if one $n_i=0$ then it is in fact the $p-1$ case.
Therefore, we can assume all the $n_i$ to be positive integers.
Also note that $n_0$ is not sorted and it is not included
in the $n_{s_j}$.
To explicitly describe all the roots of $D_n(a)$, we begin with the patterns
from $p=0,1,2$.

For the $p=0$ case, the constant term $D_n(n_0;a,b,c)$ vanishes when
\begin{equation}\label{e-root0}
-a\in\{ic+1,ic+2,\dots,ic+b\mid i=0,\dots,n-1\}.
\end{equation}

For the $p=1$ case, the constant term $D_n\big((n_0,n_1);a,b,c\big)$ vanishes when
\begin{multline}\label{e-root1}
-a\in\{ic+1,ic+2,\dots,ic+b\mid i=0,\dots,n_0\} \\
\cup \{ic+i-n_0+1,ic+i-n_0+2,\dots,ic+i-n_0+b\mid i=n_0+1,\dots,n-1\}.
\end{multline}
We can represent the sets in \eqref{e-root1} as
\begin{subequations}\label{d-roots1}
\begin{equation}\label{d-roots1-1}
\left.
\begin{array}{cccccccccc}
&1 &\dots &b \\
&c+1 &\dots &c+b\\
&\vdots  &\vdots &\vdots \\
&n_0c+1 &\dots &n_0c+b
\end{array}\right\} \text{$n_0+1$ rows}
\end{equation}
\begin{equation}\label{d-roots1-2}
\left.
\begin{array}{cccccccccc}
&(n_0+1)c+2 &\dots &(n_0+1)c+b+1\\
&(n_0+2)c+3 &\dots &(n_0+2)c+b+2\\
&\vdots  &\vdots &\vdots \\
&(n-1)c+n_1 &\dots &(n-1)c+b+n_1-1
\end{array}\right\} \text{$n_1-1$ rows}.
\end{equation}
\end{subequations}
Each row in \eqref{d-roots1} contains $b$ elements, and there are $n$ rows in total.
The first $n_0+1$ or $n-(n_1-1)$ rows in \eqref{d-roots1-1}
keep the same pattern as that in \eqref{e-root0}.

For the $p=2$ case, assume $n_1$ and $n_2$ are rearranged increasingly as $n_{s_1}, n_{s_2}$.
Then the constant term $D_n\big((n_0,n_1,n_2);a,b,c\big)$ vanishes when
\[
-a\in \bigcup_{i=0}^{n_0+1}R^0_i\bigcup_{i=n_0+2}^{n_0+2n_{s_1}-1} R^1_i\bigcup_{i=n_0+2n_{s_1}}^{n-1} R^2_i.
\]
Here
\[
R^0_i=\{ic+1,ic+2,\dots,ic+b\}
\]
for $i=0,\dots,n_0+1$,
\begin{equation*}
R^1_i=\Big\{ic+\Big\lfloor\frac{i-n_0}{2}\Big\rfloor+1,ic+\Big\lfloor\frac{i-n_0}{2}\Big\rfloor+2,\dots,
ic+\Big\lfloor\frac{i-n_0}{2}\Big\rfloor+b\Big\}
\end{equation*}
for $i=n_0+2,\dots,n_0+2n_{s_1}-1$,
and
\begin{equation*}
R^2_i=\{ic+i-n_0-n_{s_1}+1,ic+i-n_0-n_{s_1}+2,\dots,
ic+i-n_0-n_{s_1}+b\}
\end{equation*}
for $i=n_0+2n_{s_1},\dots,n-1$.
We represent the elements in $R^0_i, R^1_i, R^2_i$ in rows as follows respectively:
\begin{subequations}\label{d-roots2}
\begin{equation}\label{d-roots2-1}
\left.
\begin{array}{cccccccccc}
&1 &\dots &b \\
&c+1 &\dots &c+b\\
&\vdots  &\vdots &\vdots \\
&(n_0+1)c+1 &\dots &(n_0+1)c+b
\end{array}\right\} \text{$n_0+2$ rows}
\end{equation}
\begin{equation}\label{d-roots2-2}
\left.
\begin{array}{cccccccccc}
&(n-n_1-n_2+2)c+2 &\dots &(n-n_1-n_2+2)c+b+1\\
&(n-n_1-n_2+3)c+2 &\dots &(n-n_1-n_2+3)c+b+1\\
&(n-n_1-n_2+4)c+3 &\dots &(n-n_1-n_2+4)c+b+2\\
&(n-n_1-n_2+5)c+3 &\dots &(n-n_1-n_2+5)c+b+2\\
&\vdots  &\vdots &\vdots \\
&(n+n_{s_1}-n_{s_2}-2)c+n_{s_1} &\dots &(n+n_{s_1}-n_{s_2}-2)c+n_{s_1}+b-1\\
&(n+n_{s_1}-n_{s_2}-1)c+n_{s_1} &\dots &(n+n_{s_1}-n_{s_2}-1)c+n_{s_1}+b-1
\end{array}\right\} \text{$2(n_{s_1}-1)$ rows}
\end{equation}
\begin{equation}\label{d-roots2-3}
\left.
\begin{array}{cccccccccc}
&(n+n_{s_1}-n_{s_2})c+n_{s_1}+1 &\dots &(n+n_{s_1}-n_{s_2})c+n_{s_1}+b\\
&(n+n_{s_1}-n_{s_2}+1)c+n_{s_1}+2 &\dots &(n+n_{s_1}-n_{s_2}+1)c+n_{s_1}+b+1\\
&\vdots  &\vdots &\vdots \\
&(n-1)c+n_{s_2} &\dots &(n-1)c+n_{s_2}+b-1
\end{array}\right\} \text{$n_{s_2}-n_{s_1}$ rows.}
\end{equation}
\end{subequations}
Note that in \eqref{d-roots2} each row contains $b$ elements,
the elements in each row are consecutive integers arranged in an increasing order.
There are $n$ rows in total.
The first $n_0+2$ or $n-(n_1-1)-(n_2-1)$ rows in \eqref{d-roots2-1}
are of the same pattern as that in \eqref{e-root0} and \eqref{d-roots1-1}.
Take the rows of \eqref{d-roots2-2} every other row from the first line
(represent these rows using their first elements):
\[
(n-n_1-n_2+2)c+2,(n-n_1-n_2+4)c+3,\dots,(n+n_{s_1}-n_{s_2}-2)c+n_{s_1}.
\]
We can see that these $n_{s_1}-1$ rows are of the similar pattern as that
in \eqref{d-roots1-2}, except the coefficients of $c$ differ by 2.
The first elements of the remaining rows of \eqref{d-roots2-2} and \eqref{d-roots2-3}
are
\begin{multline*}
(n-n_1-n_2+3)c+2,(n-n_1-n_2+5)c+3,\dots,(n+n_{s_1}-n_{s_2}-1)c+n_{s_1},\\
(n+n_{s_1}-n_{s_2})c+n_{s_1}+1,(n+n_{s_1}-n_{s_2}+1)c+n_{s_1}+2,
\dots,(n-1)c+n_{s_2}.
\end{multline*}
We can see that these $n_{s_2}-1$ rows are of the similar pattern as that
in \eqref{d-roots1-2}, except the coefficients of $c$ differ by 2 or 1.
We can arrange the elements in \eqref{d-roots2} in this way:
\begin{enumerate}
\item Arrange the consecutive $b$ integers increasingly in each row.
Thus, the elements in every row are determined by the first element of that row;
\item
The roots in \eqref{d-roots2-1} are routine. There are $n-(n_1-1)-(n_2-1)=n_0+2$ rows,
and the first element of each row is of the form $ic+1$ for $i=0,\dots,n_0+1$;
\item
The numbers $n_1$ and $n_2$ determine $n_1-1$ and $n_2-1$ rows respectively.
We can think these rows are arranged alternatively, as shown below.
\begin{equation}\label{root-p2}
\xymatrix@R=0.5ex{
&(n-n_1-n_2+2)c+2 \quad \dots \quad (n-n_1-n_2+2)c+b+1&&\\
&(n-n_1-n_2+3)c+2 \quad \dots \quad (n-n_1-n_2+3)c+b+1& &\\
&(n-n_1-n_2+4)c+3 \quad \dots \quad (n-n_1-n_2+4)c+b+2&&\text{$n_{s_1}-1$ rows}\\
&(n-n_1-n_2+5)c+3 \quad \dots \quad (n-n_1-n_2+5)c+b+2&&\\
&\vdots  \quad \quad \quad  \quad \quad \quad \vdots  \quad \quad \quad \quad \quad \quad \vdots &&\\
&(n+n_{s_1}-n_{s_2}-2)c+n_{s_1}\quad \dots \quad(n+n_{s_1}-n_{s_2}-2)c+n_{s_1}+b-1&&\\
&(n+n_{s_1}-n_{s_2}-1)c+n_{s_1}\quad \dots \quad(n+n_{s_1}-n_{s_2}-1)c+n_{s_1}+b-1&&\text{$n_{s_2}-1$ rows.}\\
&&&\\
&&&\\
&&&\\
&(n+n_{s_1}-n_{s_2})c+n_{s_1}+1 \quad \dots \quad (n+n_{s_1}-n_{s_2})c+n_{s_1}+b&&\\
&(n+n_{s_1}-n_{s_2}+1)c+n_{s_1}+2\quad \dots \quad (n+n_{s_1}-n_{s_2}+1)c+n_{s_1}+b+1&&\\
&\vdots  \quad \quad \quad  \quad \quad \quad \vdots  \quad \quad \quad \quad \quad \quad \vdots &&\\
&(n-1)c+n_{s_2} \quad \dots \quad (n-1)c+n_{s_2}+n_{s_2}+b-1&&
\ar"1,2";"3,4" \ar"3,2";"3,4" \ar"6,2";"3,4"
\ar"2,2";"7,4" \ar"4,2";"7,4" \ar"7,2";"7,4" \ar"11,2";"7,4" \ar"12,2";"7,4" \ar"14,2";"7,4"\ar@{-}"9,1";"9,3"
}
\end{equation}
\end{enumerate}

The reader may conclude the general pattern for any nonnegative integer $p$. Notice that in this general case
$n_0=n-\sum_{i=1}^pn_{i}$, and $1\leq n_{s_1}\leq \dots\leq n_{s_p}$ is a rearrangement of $n_1,\dots,n_p$.
In this case, the constant term $D_n\big((n_0,\dots,n_p);a,b,c\big)$ vanishes
when $-a$ belongs to the following sets:
\begin{equation}\label{def-R0}
R^0_i=\{ic+1,ic+2,\dots,ic+b\}
\end{equation}
for $i=0,\dots,n_0+p-1$;
\[
R^1_i=\Big\{ic+\Big\lfloor \frac{i-n_0}{p} \Big\rfloor+1, \dots,
ic+\Big\lfloor \frac{i-n_0}{p} \Big\rfloor+b
\Big\}
\]
for $i=n-\sum_{i=1}^pn_{i}+p,\dots,n-\sum_{\substack{i=1\\i\neq s_1}}^pn_{i}+(p-1)n_{s_1}-1$;
\[
R^2_i=\Big\{ic+\Big\lfloor \frac{i-n_0-n_{s_1}}{p-1} \Big\rfloor+1, \dots,
ic+\Big\lfloor \frac{i-n_0-n_{s_1}}{p-1} \Big\rfloor+b
\Big\}
\]
for $i=n-\sum_{\substack{i=1\\i\neq s_1}}^pn_{i}+(p-1)n_{s_1},
\dots,n-\sum_{\substack{i=1\\i\neq s_1,s_2}}^pn_{i}+(p-2)n_{s_2}-1$;
\[
R^3_i=\Big\{ic+\Big\lfloor \frac{i-n_0-n_{s_1}-n_{s_2}}{p-2} \Big\rfloor+1, \dots,
ic+\Big\lfloor \frac{i-n_0-n_{s_1}-n_{s_2}}{p-2} \Big\rfloor+b
\Big\}
\]
for $i=n-\sum_{\substack{i=1\\i\neq s_1,s_2}}^pn_{i}+(p-2)n_{s_2},
\dots,n-\sum_{\substack{i=1\\i\neq s_1,s_2,s_3}}^pn_{i}+(p-3)n_{s_3}-1$;
and so on. Finally,
\begin{equation}\label{R_p}
R^p_i=\Big\{ic+i-n_0-n_{s_1}-\cdots-n_{s_{p-1}}+1, \dots,
ic+i-n_0-n_{s_1}-\cdots-n_{s_{p-1}}+b
\Big\}
\end{equation}
for $i=n-\sum^p_{\substack{i=1\\i\neq s_1,\dots,s_{p-1}}}n_i+n_{s_{p-1}}, \dots,n-1$.
In general, for a fixed $j\in \{1,\dots,p\}$ we can define
\begin{equation}\label{def-Rj}
R^j_i:=\Big\{ic+\Big\lfloor \frac{i-n_0-\sum_{k=1}^{j-1}n_{s_k}}{p-j+1} \Big\rfloor+1, \dots,
ic+\Big\lfloor \frac{i-n_0-\sum_{k=1}^{j-1}n_{s_k}}{p-j+1} \Big\rfloor+b
\Big\}
\end{equation}
for $i=n-\sum_{i=j}^pn_{s_i}+(p-j+1)n_{s_{j-1}},\dots,
n-\sum_{i=j+1}^pn_{s_i}+(p-j)n_{s_j}-1$.
Here $n_{s_0}:=1$ and a summation is defined to be zero as usual all over this paper if the lower bound exceeds the upper bound.
The sets $R^0_i$ should be defined as in \eqref{def-R0} separately.
Note that in $R_i^j$ the index $j$ is determined by $i$. We use this redundant index $j$ to categorize the sets $R_i^j$ according to the number $n_{s_j}$.
We can also arrange all the elements of $R^j_i$
as $nb$ rows similar to that in \eqref{d-roots2}. The first $n_0+p$ rows in $R^0_i$ are routine as in
\eqref{d-roots1-1} and \eqref{d-roots2-1}.
The numbers $n_j$ determine $n_j-1$ rows for $j=1,\dots,p$ respectively
similar to the $p=2$ case in \eqref{root-p2}.
Define $R$ to be the union of all the sets $R^j_i$. That is
\begin{equation}\label{def-roots}
R=\bigcup_{i=0}^{n-1}R^j_i,
\end{equation}
where $j\in \{0,\dots,p\}$ is determined by the corresponding $i$ in $R_i^j$ as mentioned above.
Then we can state our main lemma.
\begin{lem}\label{lem-roots}
The constant term $D_n\big((n_0,\dots,n_p);a,b,c\big)$
vanishes for $-a\in R$.
\end{lem}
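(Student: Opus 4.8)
The plan rests on Corollary~\ref{cor-poly}: $D_n(a)$ is a polynomial in $q^a$ of degree at most $nb$, while $R$ consists of $nb$ integers (pairwise distinct once $c$ is large), so it suffices to prove that $D_n(a)$ vanishes at every $q^a=q^{-m}$ with $m\in R$; together with one non‑vanishing value this would pin down $D_n(a)$ and yield \eqref{e-recursion}. Since the points $q^{-m}$ correspond to negative integers $a=-m$, I would first extend the definition of $D_n(a)$ to all $a\in\ZZ$ by means of iterated Laurent series \cite{xinresidue}, fixing a term order on $x_0,x_1,\dots,x_n$; for $a=-m$ the factor $(x_0/x_i)_{-m}$ becomes the reciprocal $\bigl(q^{-m}x_0/x_i\bigr)_m^{-1}$, so $D_n(-m)$ is the constant term of a rational function rather than of a Laurent polynomial. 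As in \cite{GX,LXZ,XZ2023}, this extended constant term agrees, as a function of $q^a$, with the polynomial of Corollary~\ref{cor-poly}, so the lemma is equivalent to the assertion that $\CT_{\x}L_{n_0,\dots,n_p}(-m,b,c;x)=0$ for every $m\in R$.

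I would then split $R$ according to its rows. For $m$ in a routine row $R^0_i$, the vanishing mechanism is the one already present in the $q$-Morris identity \eqref{q-Morris}: taking the constant term in one suitably chosen variable by partial fractions produces a univariate rational function of negative degree, and the classical Gessel–Xin argument (in the form used in \cite{LXZ}) makes the resulting residue sum telescope to $0$. Here the factorization of $L_{n_0,\dots,n_p}$ that separates off the factors containing one variable $x_i$, together with the splitting formula of Section~\ref{sec-split}, lets one remove a variable and run an induction on $n$, with the cases $p=0$ and $p=1$ (known via \eqref{q-Morris} and Theorem~\ref{thm-qForrester}) as base. Some of the non‑routine rows $R^j_i$ ($j\ge1$) can be reached in the same way.

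The heart of the matter is the non‑routine rows $R^j_i$ for which the natural constant-term extraction leaves a rational function of \emph{nonnegative} degree in the relevant variable; because of the asymmetry between $x_0$ and $x_1,\dots,x_n$ the plain Gessel–Xin method does not apply, which is the long-standing obstacle recorded in \cite{Gessel-Lv-Xin-Zhou2008}. To handle these I would combine the transformation of \cite{XZ2023}, which rewrites a $2$-component $q$-Baker–Forrester rational function as a Laurent polynomial under the appropriate numerical condition on $a$, with the new splitting formulas for $L_{n_0,\dots,n_p}$ developed in Section~\ref{sec-split}: splitting off one block $N_l$ expresses $D_n(-m)$ as a finite sum of constant terms, each of which either vanishes because its monomial factor meets one of the vanishing coefficients of $\prod_{1\le i<j\le n}(x_i/x_j)_{c+\varepsilon_{ij}}(qx_j/x_i)_{c+\varepsilon_{ij}}$ classified in Section~\ref{sec-split}, or reduces, after discarding a variable, to a smaller instance of the lemma, which closes the induction on $n$. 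The one genuinely new positive-degree constant term that survives this reduction I would isolate as Lemma~\ref{lem-Q} and prove separately in the final section. I expect that positive-degree evaluation to be the principal obstacle, since it is exactly the point where neither the symmetry of the $x_i$ nor the negative-degree hypothesis of Gessel–Xin is available, and the vanishing must be forced directly by the new splitting identities.
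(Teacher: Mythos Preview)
Your overall framing---extend $D_n(a)$ to negative integers $a$ via iterated Laurent series, interpret $D_n(-m)$ as the constant term of a rational function, and prove vanishing for each $m\in R$---is correct and matches the paper. But the proof architecture you sketch diverges from the paper's in ways that leave genuine gaps. The paper does \emph{not} split $R$ into routine and non-routine rows handled by separate mechanisms, nor does it induct on $n$ with $p\le 1$ as base. Instead it sets $d=-a$, writes $\CT_{\x}Q(d)$ with $Q(d)$ as in \eqref{def-Q}, and iterates Lemma~\ref{lem-almostprop} in $x_0,x_{u_1},x_{u_2},\dots$, producing terms $Q(d\Mid u;k)$ indexed by $u=(u_1,\dots,u_s)$ and $k=(k_1,\dots,k_s)$. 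The induction is on the depth $s$ of this iteration (equivalently on $n-s$), with base case $s=n$. All $d\in R$ are then handled uniformly by a single trichotomy (Lemma~\ref{lem-Q}) governed by where $d$ sits relative to the thresholds $(s-1)c+b+t_s$ and $sc+\sum_i r_i(n_i-r_i)/(n-s)$, with $t_s$ the piecewise quantity of Corollary~\ref{cor-3}. The essential ingredient missing from your plan is the combinatorial machinery behind this trichotomy: Proposition~\ref{prop-min} and the key Lemma~\ref{lem-key} analyse the configuration of the residues $k_1,\dots,k_s$ and force either a vanishing factor $(q^{1-k_i})_b$ or $(q^{k_j-k_i-c-\varepsilon})_{2c+2\varepsilon}$, or applicability of one further Gessel--Xin step, or the positive-degree regime. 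Your ``residue sum telescopes to $0$'' is not the actual mechanism, and without an analogue of Lemma~\ref{lem-key} you have no way to know which case you are in.

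Two further structural issues. First, your induction on $n$ with $p=0,1$ as base does not close: removing one variable from a block $N_l$ yields parameters $(n_0,\dots,n_l-1,\dots,n_p)$, which does not decrease $p$, so you never reach $p\le 1$. Second, the splitting formula of Section~\ref{sec-split} is not applied to $D_n(-m)$ at all; it is a partial-fraction expansion in an \emph{auxiliary} variable $y$ attached to the function $S$ of \eqref{def-L}, and its only role in the roots argument is to establish the vanishing-coefficient Lemma~\ref{lem-vanishing}. That lemma is invoked solely inside Case~(3) of Lemma~\ref{lem-Q}, \emph{after} $Q(d\Mid u;k)$ has been shown to be a genuine Laurent polynomial (Lemma~\ref{lem-QLaurentpoly}) and expanded in $x_{u_s}$. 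So the path from the splitting formula to the vanishing of $D_n(-m)$ is one level more indirect than your outline suggests, and Lemma~\ref{lem-Q} is not ``one surviving positive-degree constant term'' but the three-case engine that drives the entire induction.
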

The proof of this lemma is quite involved, we leave this to Section~\ref{sec-roots}.

\subsection{A rationality result}\label{subsec-rati}
The number of elements $|R|$ in \eqref{def-roots} may not be $nb$
if $D_n(a)$ has multiple roots.
It is easy to find this in \eqref{d-roots1} if $c<b$ in the $p=1$ case.
If $c<b$ then
the adjacent rows in \eqref{d-roots1} can have the same element.
If $c$ is sufficiently large, then the union of the $R^j_i$ in \eqref{def-roots} is a disjoint union, and $|R|=nb$ is the same as the upper bound of the degree of $D_n(a)$. In this case, we only need to find one extra non-vanishing value for $D_n(a)$
to determine its closed-form expression.
It is evident that the explicit lower bound for the variable $c$ to ensure $|R|=nb$ for any value of $p$ is $c\geq b$.
We use the next result to extend the expression for $D_n(a)$ from sufficiently large $c$
to all nonnegative integers $c$.
\begin{prop}\cite[Corollary 7.4]{Zhou23}\label{prop-rationality}
Let $d$ be a nonnegative integer independent of $c$ and $H$ be a homogeneous Laurent polynomial in $x_0,x_1,\dots,x_n$ of degree 0. If an expression for
\begin{equation}\label{e-H}
\CT_{\x} H\prod_{1\leq i<j\leq n}
\Big(\frac{x_{i}}{x_{j}}\Big)_{c}\Big(q\frac{x_{j}}{x_{i}}\Big)_{c}
\end{equation}
holds for $c\geq d$, then it also holds for all nonnegative integers $c$.
\end{prop}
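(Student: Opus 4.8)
My plan is to fix a single Laurent monomial, establish a structural factorisation of the corresponding constant term as a function of $c$, and then deduce the rationality principle by a rigidity argument over the field of rational functions in $q^c$. By linearity in $H$, I would first reduce to the case $H=x^{-v}$ a Laurent monomial of degree $0$; since $\prod_{1\le i<j\le n}(x_i/x_j)_c(qx_j/x_i)_c$ involves only $x_1,\dots,x_n$, this forces the $x_0$-exponent of $H$ to vanish, so effectively $v\in\ZZ^n$ with $|v|=0$. Set
\[
\psi_v(c):=\CT_{\x}\left[x^{-v}\prod_{1\le i<j\le n}(x_i/x_j)_c(qx_j/x_i)_c\right].
\]
The $H=1$ value $\Omega(c):=\CT_{\x}\prod_{1\le i<j\le n}(x_i/x_j)_c(qx_j/x_i)_c=(q)_{nc}/(q)_c^{\,n}$ is the equal-parameter case of the Zeilberger--Bressoud $q$-Dyson identity \eqref{q-Dyson}. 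The key claim I would aim for is
\begin{equation}\label{e-planfactor}
\psi_v(c)=\Omega(c)\,\sigma_v(q^c)\qquad\text{for every nonnegative integer }c,
\end{equation}
with $\sigma_v(u)\in\CC(q)(u)$ a fixed rational function, depending only on $n$ and $v$.

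Granting \eqref{e-planfactor}, the proposition follows quickly. Any closed-form ``expression'' $E(c)$ of the kind occurring in this theory likewise factors as $E(c)=\Omega(c)\,\tau(q^c)$ with $\tau\in\CC(q)(u)$ fixed: the $a=b=0$ case of the $q$-Morris product \eqref{q-Morris} telescopes to exactly $\Omega(c)$, and the prefactor in the recursion \eqref{e-recursion} equals $\Omega(c)/\Omega_{n-1}(c)$ times a rational function in $q^c$ of degree bounded independently of $c$, where $\Omega_{n-1}(c)=(q)_{(n-1)c}/(q)_c^{\,n-1}$ (indeed $\qbinom{nc+n_k-1}{c}=\tfrac{(q)_{nc}}{(q)_c(q)_{(n-1)c}}$ times a bounded-degree rational function in $q^c$, and the $a,b$-factors contribute only bounded-degree polynomials in $q^c$); so by induction on $n$ all relevant closed forms have this shape. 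Now two functions of the form $\Omega(c)\,R(q^c)$, $R\in\CC(q)(u)$, that agree for all $c\ge d$ must agree for all $c\ge 0$: since $\Omega(c)$ is a nonzero scalar for every $c\ge 0$, the rational functions $\sigma_v$ and $\tau$ then coincide at the infinitely many distinct points $u=q^c$ ($c\ge d$), hence coincide identically. (A pole of $\sigma_v$ at some $u=q^c$, $c\ge 0$, is impossible because $\psi_v(c)$ is finite and $\Omega(c)\ne 0$ there.) Summing over the finitely many monomials of a general $H$ then completes the proof.

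It remains to establish \eqref{e-planfactor}. Here $\psi_v(c)$ is an arbitrary-``layer'' coefficient of the equal-parameter $q$-Dyson product, and I would evaluate it by the Gessel--Xin iterated-Laurent-series method \cite{GX}: with the order $x_1\succ\cdots\succ x_n$ fixed, extract the constant term one variable at a time. At each stage the integrand is a rational function of the current variable with poles only at points $x_i=q^k x_j$, and the successive residue evaluations --- governed by the $q$-Vandermonde summation and the known $q$-Dyson and $q$-Morris constant terms --- should produce precisely the factor $\Omega(c)$ together with finitely many rational factors in $q^c$ of the form $1-q^{jc+\ell}$, with $j,\ell$ bounded in terms of $n$ and $v$ alone. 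Specialising to the first-layer monomials of \eqref{e-firstlayer} would recover the formulas of Stembridge \cite{stembridge1987} and Lv--Xin--Zhou \cite{LXZ}.

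The hard part will be the uniform bookkeeping in this last step: proving that \emph{all} of the $c$-dependence of $\psi_v(c)$ is carried by the single explicit factor $\Omega(c)=(q)_{nc}/(q)_c^{\,n}$ --- equivalently, that no factor $1-q^{jc+\ell}$ with $j$ or $\ell$ growing with $c$ survives the reduction --- uniformly over an arbitrary, possibly large, exponent $v$, together with handling the rational functions of non-negative degree in the current variable that arise at intermediate stages, for which one needs the extension of the Gessel--Xin method developed in \cite{LXZ} (the original method treating only strictly negative degrees). Once this control is achieved, \eqref{e-planfactor}, and with it the proposition, follows.
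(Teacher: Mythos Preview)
The paper does not prove this proposition; it is quoted as \cite[Corollary~7.4]{Zhou23} and used as a black box, so there is no in-paper proof to compare your proposal against.

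Your outline is in the right spirit --- the cited result does rest on a structural rationality of the constant term as a function of $q^c$ --- but two points keep it from being a proof.

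First, the factorisation $\psi_v(c)=\Omega(c)\,\sigma_v(q^c)$ with $\sigma_v\in\CC(q)(u)$ fixed is the entire content of the result, and you do not prove it. You correctly flag it as ``the hard part'' and gesture at the Gessel--Xin reduction, but you give no argument that the iterated residue extraction produces only boundedly many factors $1-q^{jc+\ell}$ uniformly in $v$. For large $|v|$ the intermediate rational functions have positive degree in the current variable, where neither the original method of \cite{GX} nor the extension in \cite{LXZ} (which handles degree at most zero) applies; that obstacle is precisely what \cite{XZ2023,Zhou23} build new machinery to overcome, and it is not bookkeeping.

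Second, your justification that the closed-form side also has the shape $\Omega(c)\,\tau(q^c)$ appeals to the recursion \eqref{e-recursion} by induction on $n$. This does not prove the proposition as stated (a general $H$ and an unspecified ``expression''); at best it yields the single instance used in Subsection~\ref{subsec-proof}. Even there you must address that the $H$ actually substituted immediately after the proposition depends on $c$ through the factors $(q^c x_i/x_j)_{\varepsilon_{ij}}$, so the reduction to a single $c$-independent monomial $x^{-v}$ is not immediate: one needs the coefficients of $H$ in the monomial basis to be polynomial in $q^c$ of controlled degree and to assemble into a single rational function of bounded complexity, which again reduces to the unproved structural claim above.
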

By taking
\[
H=\prod_{i=1}^{n}(x_0/x_i)_a(qx_i/x_0)_b
\prod_{1\leq i<j\leq n}(q^cx_i/x_j)_{\varepsilon_{ij}}(q^{c+1}x_j/x_i)_{\varepsilon_{ij}}
\]
in Proposition~\ref{prop-rationality},
the constant term in \eqref{e-H} is just $D_n(a)$.
Thus, to determine $D_n(a)$, it is sufficient to find its closed-form expression for $c\geq b$. In the remainder of the paper, we always assume that $c\geq b$.

\subsection{A recursion for $D_n(0)$}\label{subsec-recursion}
Our last step to determine $D_n(a)$ is to obtain $D_n(0)$.
It is easy to see that
\begin{equation}\label{e-0}
D_n(0)=D_n\big((n_0,\dots,n_p);0,b,c\big)=D_n\big((n_0,\dots,n_p);0,0,c\big)
\end{equation}
by homogeneity.
We obtain a recursion
for $D_n(0)$ in the next lemma.
\begin{lem}\label{lem-0}
Assume $n_k=\max\{n_1,\dots,n_p\}$. Then
\begin{equation}\label{e-rec0}
D_n\big((n_0,\dots,n_p);0,0,c\big)
=\frac{1-q^{n_k(c+1)}}{1-q^{c+1}}\qbinom{nc+n_k-1}{c}
D_{n-1}\big((n_0,\dots,n_{k-1},n_{k}-1,n_{k+1},\dots,n_p);0,0,c\big).
\end{equation}
\end{lem}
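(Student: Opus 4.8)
The plan is to evaluate $D_n(0)=D_n\big((n_0,\dots,n_p);0,0,c\big)$ directly as the constant term of
\[
\prod_{1\leq i<j\leq n}(x_i/x_j)_{c+\varepsilon_{ij}}(qx_j/x_i)_{c+\varepsilon_{ij}},
\]
and to extract the factor corresponding to the block $N_k$ of maximal size $n_k$ by a splitting argument in the spirit of Cai \cite{cai}. First I would isolate one distinguished variable from $N_k$, say $x_m$ where $m$ is the last index of $N_k$, and write the product as $x_m$-dependent factors times the remaining product. The $x_m$-dependent part is
\[
\prod_{\substack{1\le i\le n\\ i\ne m}}(x_i/x_m)_{c+\varepsilon_{im}}(qx_m/x_i)_{c+\varepsilon_{im}},
\]
where $\varepsilon_{im}=1$ exactly for the $n_k-1$ other indices $i\in N_k$ and $\varepsilon_{im}=0$ for the remaining $n-n_k$ indices. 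The goal is to apply the splitting formula from Section~\ref{sec-split} to peel off $x_m$ and reduce $D_n(0)$ to $D_{n-1}(0)$ with $n_k$ decreased by one, picking up the rational prefactor $\dfrac{1-q^{n_k(c+1)}}{1-q^{c+1}}\qbinom{nc+n_k-1}{c}$.

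The key steps, in order, would be: (1) apply the splitting formula to the $x_m$-factors, expressing the Laurent polynomial in $x_m$ as a telescoping/partial-fraction-type sum whose constant term in $x_m$ can be computed term by term; (2) observe that because $a=b=0$ there is no $x_0$-factor and the whole expression is homogeneous of degree $0$, so after taking the $x_m$ constant term the surviving terms collapse to a single sum that factors as (a closed-form $q$-product in $c$, $n$, $n_k$) times the constant term of the product over the remaining $n-1$ variables; (3) identify that remaining constant term as $D_{n-1}\big((n_0,\dots,n_k-1,\dots,n_p);0,0,c\big)$, using that deleting $x_m$ from the index set exactly replaces $N_k$ by a block of size $n_k-1$ and leaves all $\varepsilon_{ij}$ for $i,j\ne m$ unchanged; (4) simplify the scalar prefactor to the claimed form $\dfrac{1-q^{n_k(c+1)}}{1-q^{c+1}}\qbinom{nc+n_k-1}{c}$ by a $q$-binomial manipulation. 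A sanity check is available at $p=0$, where $n_k$ would be absent; and at $p=1$ the formula should match the $a=b=0$ specialization of Theorem~\ref{thm-qForrester}, and more generally the $a=b=0$ case agrees with Baratta \cite{Baratta}.

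I expect the main obstacle to be step (1)–(2): controlling which terms of the splitting expansion actually contribute to the constant term in $x_m$. The difficulty is precisely the non-symmetry introduced by the $\varepsilon_{im}$ — the exponents attached to $x_m$ differ ($c+1$ for partners in $N_k$, $c$ otherwise) — so the constant-term extraction is not a single clean residue but requires tracking a window of terms, and one must verify that the "cross terms" cancel or resum into the stated rational function. This is exactly the kind of bookkeeping that the Gessel--Xin iterated-Laurent-series framework of Section~\ref{sec-GX} is designed to handle, and I would lean on it: order the variables so that $x_m$ is the smallest, so that all the relevant geometric series expand in a fixed direction, and then the constant term in $x_m$ becomes a finite, explicitly summable hypergeometric-type sum. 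Once that sum is evaluated in closed form, the remaining factor is manifestly $D_{n-1}(0)$ and the recursion \eqref{e-rec0} follows; extending from "$c$ sufficiently large" to all nonnegative $c$ is not needed here since \eqref{e-rec0} is an identity of rational functions in $q^c$ once both sides are known.
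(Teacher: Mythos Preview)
Your plan has a genuine gap at the step you yourself flag as the main obstacle. You propose to ``apply the splitting formula to the $x_m$-factors'' for a single distinguished $x_m\in N_k$, but the splitting formula of Section~\ref{sec-split} does not operate that way, and a direct attack on $x_m$ cannot work: the $x_m$-dependent factor is a Laurent \emph{polynomial} in $x_m$, so there are no poles and no partial-fraction decomposition to perform; and $\CT_{x_m}$ of the full product is not a scalar multiple of the reduced $(n-1)$-variable product. The paper's device is to introduce an \emph{auxiliary} variable $y$ and a carefully asymmetric denominator
\[
\prod_{i<k}\prod_{l\in N_i}(y/x_l)_c\;\prod_{l\in N_k}(q^{-1}y/x_l)_{c+1}\;\prod_{i>k}\prod_{l\in N_i}(q^{-1}y/x_l)_c,
\]
so that the resulting rational function $S$ has simple poles in $y$ and $\CT_{y,\x}S=D_n(0)$. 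Partial fractions in $y$ then give a sum of pieces $A_{ij}^{(t)}/(1-q^jy/x_i)$ indexed by \emph{all} $i\in\{1,\dots,n\}$, not just one chosen $m$.

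The hypothesis $n_k=\max\{n_1,\dots,n_p\}$ enters not in choosing which variable to peel off but in a degree count: each $A_{ij}^{(t)}$ is a polynomial in $x_i^{-1}$ whose lowest degree is at least $n_k$ for $t=0$ and at least $n_k-n_t+1\ge 1$ for $t\in\{1,\dots,p\}\setminus\{k\}$, so $\CT_{x_i}A_{ij}^{(t)}=0$ for all $t\ne k$. Only the pieces with $i\in N_k$ survive, and for those $\CT_{x_i}A_{ij}^{(k)}$ is an explicit scalar times the reduced product, which is $D_{n-1}\big((n_0,\dots,n_k-1,\dots,n_p);0,0,c\big)$. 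Summing that scalar over $i\in N_k$ and $j=-1,\dots,c-1$ is a finite $q$-binomial sum evaluating to $\dfrac{1-q^{n_k(c+1)}}{1-q^{c+1}}\qbinom{nc+n_k-1}{c}$. Your outline misses both the auxiliary-$y$ construction (the ``suitable denominator'' whose discovery the paper itself calls the main difficulty) and the degree mechanism that singles out the block $N_k$; without these, steps (1)--(2) of your plan have no concrete content.
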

We will give a proof of Lemma~\ref{lem-0} in Section~\ref{sec-split}.
It is not hard to see that one can obtain a closed-form expression for $D_n(0)$ using the recursion \eqref{e-rec0} and the equal parameter case of the $q$-Dyson identity \eqref{q-Dyson}.
However, it is too complicated to present its explicit expression here.

\subsection{A proof of Theorem~\ref{thm-1}}\label{subsec-proof}
By Corollary~\ref{cor-poly}, Lemma~\ref{lem-roots}, Proposition~\ref{prop-rationality} and Lemma~\ref{lem-0},
we can prove Theorem~\ref{thm-1}.
\begin{proof}[Proof of Theorem~\ref{thm-1}]
Since $D_n(a)$ is a polynomial in $q^a$ with degree at most $nb$ by Corollary~\ref{cor-poly},
we can determine it by its $nb+1$ distinct values.
Recall that $R$ (defined in \eqref{def-roots}) is the collection of all the roots of $D_n(a)$.
Assume $c\geq b$ in the proof below, then the number of roots $|R|=nb$.
We use the values $-a\in R\cup\{0\}$ to determine $D_n(a)$. By Lemma~\ref{lem-roots},
$D_n(a)=0$ for $-a\in R$. Then we can write
\[
D_n(a)
=\prod_{i\in R}\frac{1-q^{a+i}}{1-q^i}D_n(0).
\]
Using \eqref{e-0} and Lemma~\ref{lem-0}, we have
\begin{equation}\label{e-Dn}
D_n(a)
=\prod_{i\in R}\frac{1-q^{a+i}}{1-q^i}\cdot
\frac{1-q^{n_k(c+1)}}{1-q^{c+1}}\qbinom{nc+n_k-1}{c}
D_{n-1}\big((n_0,\dots,n_{k-1},n_{k}-1,n_{k+1},\dots,n_p);0,0,c\big).
\end{equation}
We can derive a closed-form expression for $D_n(a)$ using \eqref{e-Dn} and the recursion
for $D_n(0)$ outlined in \eqref{e-rec0}. However, this expression is not particularly elegant and requires additional notation.
Therefore, we aim to establish a recursion for $D_n(a)$ in terms of
$D_{n-1}(a)$. We present the transformation below.

Notice that $D_{n-1}\big((n_0,\dots,n_{k-1},n_{k}-1,n_{k+1},\dots,n_p);a,b,c\big)$
is a polynomial in $q^a$ with degree at most $(n-1)b$ by Corollary~\ref{cor-poly},
and it vanishes when $-a\in R\setminus R^p_{n-1}$
by Lemma~\ref{lem-roots}. Then
\begin{align}
&D_{n-1}\big((n_0,\dots,n_{k-1},n_{k}-1,n_{k+1},\dots,n_p);a,b,c\big)\nonumber \\
&=\prod_{i\in R\setminus R^p_{n-1}}\frac{1-q^{a+i}}{1-q^i}
D_{n-1}\big((n_0,\dots,n_{k-1},n_{k}-1,n_{k+1},\dots,n_p);0,b,c\big)\nonumber \\
&=\prod_{i\in R\setminus R^p_{n-1}}\frac{1-q^{a+i}}{1-q^i}
D_{n-1}\big((n_0,\dots,n_{k-1},n_{k}-1,n_{k+1},\dots,n_p);0,0,c\big)\label{e-Dn-1}.
\end{align}
Here the last equality holds by homogeneity.
Substituting \eqref{e-Dn-1} into \eqref{e-Dn} gives
\begin{equation}\label{e-Dn2}
D_n(a)=\prod_{i\in R^p_{n-1}}\frac{1-q^{a+i}}{1-q^i}
\frac{1-q^{n_k(c+1)}}{1-q^{c+1}}\qbinom{nc+n_k-1}{c}
D_{n-1}\big((n_0,\dots,n_{k-1},n_{k}-1,n_{k+1},\dots,n_p);a,b,c\big).
\end{equation}
By the definition of the $R^j_i$ in \eqref{R_p} for $i=n-1$ and $n_k=\max\{n_1,\dots,n_p\}=n_{s_p}$,
we can write
\[
R^p_{n-1}=\{(n-1)c+n_k,(n-1)c+n_k+1,\dots,(n-1)c+n_k+b-1\}.
\]
Then we can rewrite \eqref{e-Dn2} as
\begin{equation}\label{e-Dn3}
D_n(a)=\frac{(1-q^{n_k(c+1)})(q^{a+(n-1)c+n_k})_b}{(1-q^{c+1})(q^{(n-1)c+n_k})_b}
\qbinom{nc+n_k-1}{c}D_{n-1}\big((n_0,\dots,n_{k-1},n_{k}-1,n_{k+1},\dots,n_p);a,b,c\big).
\end{equation}
We complete the proof by extending \eqref{e-Dn3} to all nonnegative integers $c$
using Proposition~\ref{prop-rationality}.
\end{proof}

\section{The splitting formula and consequences}\label{sec-split}

In this section, we obtain a splitting formula \eqref{e-sp-main} for the rational function $S\big((n_0,\dots,n_p),c;x,y\big)$ defined in \eqref{def-L} below and provide its several consequences.
In Subsection~\ref{sec-SP0}, we present the splitting formula.
In Subsection~\ref{sec-SP1}, we obtain a recursion for $D_n(0)$ by the splitting formula.
Hence, we complete the proof of Lemma~\ref{lem-0}.
In Subsection~\ref{sec-SP2}, we characterize a family of vanishing coefficients of
$L_{n_0,n_1,\dots,n_p}(0,0,c;x)$ (defined in \eqref{L}).
These vanishing coefficients are crucial in determining many roots of $D_n(a)$ that the classic Gessel--Xin method is invalid.

\subsection{The splitting formula}\label{sec-SP0}
Let
\begin{equation}\label{def-L}
S=S\big((n_0,\dots,n_p),c;\x,y\big)
:=\frac{\prod_{1\leq i<j\leq n}(x_i/x_j)_{c+\varepsilon_{ij}}(qx_j/x_i)_{c+\varepsilon_{ij}}}
{\prod_{i=0}^{k-1}\prod_{l\in N_i}(y/x_l)_{c}
\prod_{l\in N_k}(q^{-1}y/x_l)_{c+1}
\prod_{i=k+1}^{p}\prod_{l\in N_i}(q^{-1}y/x_l)_{c}}.
\end{equation}
Here $k$ is also an integer such that $n_k=\max\{n_1,\dots,n_p\}$, the $N_i$ and the $\varepsilon_{ij}$
are defined in \eqref{def-Tij}, and for $z\in \mathbb{Z}$ all forms of $(1-q^zy/x_l)^{-1}$ in
\eqref{def-L} are explained as
\begin{equation}\label{e-explain}
\frac{1}{1-q^zy/x_l}=\sum_{j\geq 0}(q^zy/x_l)^j.
\end{equation}
We can inteprete \eqref{e-explain} as:
since the $z$ only ranges over a finite set, we can
find a parameter $y$ such that all forms of $|q^zy/x_l|<1$.
Then \eqref{e-explain} holds.
Hence, the function $S$ in \eqref{def-L} is in fact regarded as a Laurent series in each $x_l^{-1}$ and a power series in $y$,
rather than a rational function.
Note that by the above explanation it is not hard to see that
\begin{equation}\label{e-sp-1}
\CT_{y,\x}S\big((n_0,\dots,n_p),c;x,y\big)
=\CT_{\x} \prod_{1\leq i<j\leq n}(x_i/x_j)_{c+\varepsilon_{ij}}(qx_j/x_i)_{c+\varepsilon_{ij}}
=D_n(0).
\end{equation}
We find that $S$ can be written as
\begin{equation}\label{e-sp-0}
S= \sum_{z,l}\frac{A_{z,l}(x_l^{-1})}{1-q^zy/x_l}
\end{equation}
using the partial fraction decomposition,
where the sum is a finite summation and
each $A_{z,l}(x_l^{-1})$ is a polynomial in $x_{l}^{-1}$.
Then the constant term of each summand in \eqref{e-sp-0}
\[
\CT_{y,x_l}\frac{A_{z,l}(x_l^{-1})}{1-q^zy/x_l}=\CT_{x_l}A_{z,l}(x_l^{-1})
=A_{z,l}(x_l^{-1})\big|_{x_l^{-1}=0}=A_{z,l}(0).
\]
Together with \eqref{e-sp-1}, we have
\[
D_n(0)=\sum_{z,l}\CT_{x^{(l)}}A_{z,l}(0),
\]
where $x^{(l)}=(x_1,\dots,x_{l-1},x_{l+1},\dots,x_n)$.
We will show that each $A_{z,l}(0)$ is a Laurent polynomial in $x^{(l)}$.
The above operation originates from the splitting idea of
Cai's proof \cite{cai} of the $q$-Dyson constant term identity.
The main difficulty to utilize this idea is to find a suitable denominator such as that in \eqref{def-L}.

The main result of this section is the next lemma.
\begin{lem}\label{lem-split}
Let $S$ and $k$ be defined as in \eqref{def-L}.
Denote by $\sigma_l=n_0+n_1+\cdots+n_l$ for $l=0,\dots,p$.
We find that $S$ admits the next splitting formula:
\begin{equation}\label{e-sp-main}
S\big((n_0,\dots,n_p),c;\x,y\big)=\sum_{t=0}^{k-1}\sum_{i\in N_t}\sum_{j=0}^{c-1}\frac{A_{ij}^{(t)}}{1-q^jy/x_i}
+\sum_{i\in N_k}\sum_{j=-1}^{c-1}\frac{A^{(k)}_{ij}}{1-q^jy/x_i}
+\sum_{t=k+1}^{p}\sum_{i\in N_t}\sum_{j=-1}^{c-2}\frac{A_{ij}^{(t)}}{1-q^jy/x_i},
\end{equation}
where
\begin{subequations}
\begin{align}\label{A-0}
A^{(0)}_{ij}&=\frac{\prod_{\substack{1\leq u<v\leq n\\u,v\neq i}}(x_u/x_v)_{c+\varepsilon_{uv}}(qx_v/x_u)_{c+\varepsilon_{uv}}}
{(q^{-j})_j(q)_{c-j-1}}
\prod_{l=1}^{i-1}q^{c(j+1)}\big(q^{-c}x_l/x_i\big)_{j+1}\big(q^{j+1}x_l/x_i\big)_{c-j-1}
 \\
&\times\prod_{l=i+1}^{\sigma_{k-1}}q^{cj}\big(q^{1-c}x_l/x_i\big)_j\big(q^{j+1}x_l/x_i\big)_{c-j}
\prod_{l=\sigma_{k-1}+1}^{\sigma_k}q^{(c+1)j+1}\boldsymbol{(-x_l/x_i)}(q^{1-c}x_l/x_i)_j(q^{j+2}x_l/x_i)_{c-j-1}\nonumber \\
&\times \prod_{l=\sigma_k+1}^nq^{c(j+1)}(q^{1-c}x_l/x_i)_{j+1}(q^{j+2}x_l/x_i)_{c-j-1},\nonumber
\end{align}
\begin{align}\label{A-1}
A^{(t)}_{ij}&=\frac{\prod_{\substack{1\leq u<v\leq n\\u,v\neq i}}(x_u/x_v)_{c+\varepsilon_{uv}}(qx_v/x_u)_{c+\varepsilon_{uv}}}
{(q^{-j})_j(q)_{c-j-1}}
\prod_{l=1}^{\sigma_{t-1}}q^{c(j+1)}\big(q^{-c}x_l/x_i\big)_{j+1}\big(q^{j+1}x_l/x_i\big)_{c-j-1}
 \\
&\times\prod_{l=\sigma_{t-1}+1}^{i-1}q^{c(j+2)+1}\boldsymbol{(-x_i/x_l)}\big(q^{-c-1}x_l/x_i\big)_{j+2}
\big(q^{j+1}x_l/x_i\big)_{c-j}\nonumber \\
&\times\prod_{l=i+1}^{\sigma_t}q^{c(j+1)}\boldsymbol{(-x_i/x_l)}(q^{-c}x_l/x_i)_{j+1}(q^{j+1}x_l/x_i)_{c+1-j}\nonumber \\
&\times \prod_{l=\sigma_t+1}^{\sigma_{k-1}}q^{cj}(q^{1-c}x_l/x_i)_{j}(q^{j+1}x_l/x_i)_{c-j}
\prod_{l=\sigma_{k-1}+1}^{\sigma_k}q^{(c+1)j+1}\boldsymbol{(-x_l/x_i)}(q^{1-c}x_l/x_i)_{j}(q^{j+2}x_l/x_i)_{c-j-1}\nonumber \\
&\times \prod_{l=\sigma_{k}+1}^{n}q^{c(j+1)}(q^{1-c}x_l/x_i)_{j+1}(q^{j+2}x_l/x_i)_{c-j-1}\nonumber
\end{align}
for $t=1,\dots,k-1$,
\begin{align}\label{A-2}
A^{(k)}_{ij}&=\frac{\prod_{\substack{1\leq u<v\leq n\\u,v\neq i}}(x_u/x_v)_{c+\varepsilon_{uv}}(qx_v/x_u)_{c+\varepsilon_{uv}}}
{(q^{-j-1})_{j+1}(q)_{c-j-1}}
\prod_{l=1}^{\sigma_{k-1}}q^{c(j+1)}\big(q^{-c}x_l/x_i\big)_{j+1}\big(q^{j+1}x_l/x_i\big)_{c-j-1}
 \\
&\times\prod_{l=\sigma_{k-1}+1}^{i-1}q^{(c+1)(j+2)}\big(q^{-c-1}x_l/x_i\big)_{j+2}
\big(q^{j+2}x_l/x_i\big)_{c-j-1}\nonumber \\
&\times\prod_{l=i+1}^{\sigma_k}q^{(c+1)(j+1)}(q^{-c}x_l/x_i)_{j+1}(q^{j+2}x_l/x_i)_{c-j}
\prod_{l=\sigma_k+1}^{n}q^{c(j+1)}(q^{1-c}x_l/x_i)_{j+1}(q^{j+2}x_l/x_i)_{c-j-1},\nonumber
\end{align}
and
\begin{align}\label{A-3}
A^{(t)}_{ij}&=\frac{\prod_{\substack{1\leq u<v\leq n\\u,v\neq i}}(x_u/x_v)_{c+\varepsilon_{uv}}(qx_v/x_u)_{c+\varepsilon_{uv}}}
{(q^{-j-1})_{j+1}(q)_{c-j-2}}
\prod_{l=1}^{\sigma_{k-1}}q^{c(j+1)}\big(q^{-c}x_l/x_i\big)_{j+1}\big(q^{j+1}x_l/x_i\big)_{c-j-1}
 \\
&\times\prod_{l=\sigma_{k-1}+1}^{\sigma_k}q^{(c+1)(j+1)}\boldsymbol{(-x_l/x_i)}\big(q^{-c}x_l/x_i\big)_{j+1}
\big(q^{j+2}x_l/x_i\big)_{c-j-2}\nonumber \\
&\times\prod_{l=\sigma_k+1}^{\sigma_{t-1}}q^{c(j+2)}(q^{-c}x_l/x_i)_{j+2}(q^{j+2}x_l/x_i)_{c-j-2}\nonumber \\
&\times \prod_{l=\sigma_{t-1}+1}^{i-1}q^{c(j+3)+1}\boldsymbol{(-x_i/x_l)}(q^{-c-1}x_l/x_i)_{j+3}(q^{j+2}x_l/x_i)_{c-j-1}\nonumber \\
&\times\prod_{l=i+1}^{\sigma_t}q^{c(j+2)}\boldsymbol{(-x_i/x_l)}(q^{-c}x_l/x_i)_{j+2}(q^{j+2}x_l/x_i)_{c-j}\nonumber \\
&\times \prod_{l=\sigma_{t}+1}^{n}q^{c(j+1)}\big(q^{1-c}x_l/x_i\big)_{j+1}\big(q^{j+2}x_l/x_i\big)_{c-j-1}\nonumber
\end{align}
for $t=k+1,\dots,p$.
\end{subequations}
\end{lem}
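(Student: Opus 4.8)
\textbf{Proof proposal for Lemma~\ref{lem-split}.}

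The plan is to establish the splitting formula \eqref{e-sp-main} by the standard partial fraction route in the single variable $y$, followed by an explicit identification of each residue. First I would regard $S\big((n_0,\dots,n_p),c;\x,y\big)$, with the $x_l$ treated as fixed generic parameters, as a rational function of $y$. Its denominator is $\prod_{i=0}^{k-1}\prod_{l\in N_i}(y/x_l)_{c}\prod_{l\in N_k}(q^{-1}y/x_l)_{c+1}\prod_{i=k+1}^{p}\prod_{l\in N_i}(q^{-1}y/x_l)_{c}$, whose zeros in $y$ are exactly the points $y=q^{-j}x_i$ with: $j\in\{0,\dots,c-1\}$ when $i\in N_t$, $t<k$; $j\in\{-1,0,\dots,c-1\}$ when $i\in N_k$; and $j\in\{-1,0,\dots,c-2\}$ when $i\in N_t$, $t>k$. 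Since the numerator $\prod_{1\le u<v\le n}(x_u/x_v)_{c+\varepsilon_{uv}}(qx_v/x_u)_{c+\varepsilon_{uv}}$ is independent of $y$, the degree in $y$ of the numerator is $0$ while the denominator has positive degree, so there is no polynomial part; all poles are simple for generic $x_l$ (distinct $q^{-j}x_i$). Hence $S=\sum\frac{A}{1-q^j y/x_i}$ over exactly the index set appearing in \eqref{e-sp-main}, with $A^{(t)}_{ij}$ the residue-type coefficient, i.e. $A^{(t)}_{ij}=\big[(1-q^j y/x_i)\,S\big]\big|_{y=q^{-j}x_i}$.

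The core of the argument is then the evaluation of these coefficients, and I expect this to be the main obstacle: it is a bookkeeping computation but a delicate one. Substituting $y=q^{-j}x_i$ into $(1-q^j y/x_i)S$ means cancelling the vanishing factor against the matching factor in the denominator and evaluating everything else. The factor $(q)_{c-j-1}$ or $(q^{-j})_j$ (respectively $(q^{-j-1})_{j+1}$, $(q)_{c-j-2}$ in the $t\ge k$ cases) in the denominators of \eqref{A-0}--\eqref{A-3} comes from splitting $(y/x_i)_c$ (or $(q^{-1}y/x_i)_{c+1}$) as $(y/x_i)_j\,(1-q^j y/x_i)\,(q^{j+1}y/x_i)_{c-j-1}$ and evaluating the two surviving $q$-shifted factorials at $y=q^{-j}x_i$, using $(q^{-j})_j$ and $(q)_{c-j-1}$ as the resulting constants. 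The remaining factors $(y/x_l)_c$ with $l\ne i$ in the same block $N_t$, and those in other blocks, become $q$-shifted factorials in $x_l/x_i$ after the substitution $y=q^{-j}x_i$; tracking the shift in each of the four regimes $l<\sigma_{t-1}$, $\sigma_{t-1}<l<i$, $i<l\le\sigma_t$, $l>\sigma_t$ (and the interplay with whether $t<k$, $t=k$, or $t>k$, which changes $c$ to $c\pm 1$ and introduces the bold $(-x_l/x_i)$ or $(-x_i/x_l)$ factors from reindexing $(q^{-m}x_l/x_i)_r$ as $\pm(x_i/x_l)^{?}(\cdots)$) is exactly the content of the formulas as written. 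The powers of $q$ (the $q^{c(j+1)}$, $q^{cj}$, etc.) are the accumulated prefactors from these reindexings. I would verify one regime in full detail and assert the rest follow identically by the same manipulation.

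There are two points that need care beyond pure bookkeeping. First, the claim that \eqref{e-sp-main} holds as an identity of \emph{iterated Laurent series} in the $x_l^{-1}$ and power series in $y$, not merely of rational functions: since the partial fraction decomposition is an algebraic identity of rational functions over $\mathbb{Q}(q,x_1,\dots,x_n)$, and each side, under the expansion convention \eqref{e-explain}, is the series expansion of the same rational function in the region $|q^z y/x_l|<1$, the two series agree; this is the justification promised after \eqref{e-explain}. Second, one should double-check the \emph{endpoints} of the $j$-ranges: for $i\in N_t$ with $t<k$ the pole at $y=q^{c}x_i$ (i.e. $j=-1$) is absent because $(y/x_i)_c$ has no such root, whereas it is present for $i\in N_k$ and $i\in N_t$, $t>k$, owing to the shifted $(q^{-1}y/x_i)_{c+1}$; symmetrically the top endpoint $j=c-1$ versus $j=c-2$ reflects the $c+1$ versus $c$ in those factors. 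These endpoint conventions are precisely what make the index sets in \eqref{e-sp-main} match the denominator's zero set, so the decomposition is complete and the lemma follows.
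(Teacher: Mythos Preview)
Your proposal is correct and follows essentially the same route as the paper: partial fraction decomposition of $S$ in the single variable $y$, followed by computing each coefficient as $A^{(t)}_{ij}=\big[(1-q^j y/x_i)\,S\big]\big|_{y=q^{-j}x_i}$ and simplifying the resulting $q$-shifted factorials. The paper packages the reindexing identities you describe (the ones producing the $q^{c(j+1)}$ prefactors and the bold $(-x_l/x_i)$, $(-x_i/x_l)$ factors) into a short auxiliary lemma with five cases and then applies them block by block, exactly as you outline; one small slip to fix is that the pole $j=-1$ corresponds to $y=qx_i$, not $y=q^c x_i$.
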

Note that $A_{ij}^{(0)}$, $A_{ij}^{(t)}(t=1,\dots,k-1)$, $A_{ij}^{(k)}$
and $A_{ij}^{(t)}(t=k+1,\dots,p)$ are polynomials in $x_i^{-1}$
with degrees at least $n_k$, $n_k-n_t+1$, $0$ and $n_k-n_t+1$ respectively.
These can be seen by the bold symbols in the above.
The superscripts $(t)$ of the $A_{ij}^{(t)}$ are in fact determined by their corresponding subscripts $i$. We use the superscripts $(t)$ to identify the relation with
the subsets $N_t$.

Before giving the proof of Lemma~\ref{lem-split}, we need the following simple results.
\begin{lem}
Let $i,j$ be nonnegative integers. Then,
\begin{subequations}\label{e-ab}
\begin{equation}\label{prop-b1}
\frac{(1/y)_{i}(qy)_j}{(q^{-t}/y)_i}=q^{it}(q^{1-i}y)_t(q^{t+1}y)_{j-t}
\quad \text{for $t=0,\dots,j$};
\end{equation}
\begin{equation}\label{prop-b2}
\frac{(y)_j(q/y)_i}{(q^{-t}/y)_i}=q^{i(t+1)}(q^{-i}y)_{t+1}(q^{t+1}y)_{j-t-1}
\quad \text{for $t=-1,\dots,j-1$};
\end{equation}
\begin{equation}\label{prop-c}
\frac{(y)_j(q/y)_i}{(q^{-t}/y)_{i+1}}=-q^{(i+1)t}y(q^{-i}y)_t(q^{t+1}y)_{j-t-1}
\quad \text{for $t=0,\dots,j-1$};
\end{equation}
\begin{equation}\label{prop-d}
\frac{(1/y)_{i+1}(qy)_{j+1}}{(q^{-t}/y)_i}=-q^{i(t+1)}y^{-1}(q^{-i}y)_{t+1}(q^{t+1}y)_{j+1-t}
\quad \text{for $t=-1,\dots,j+1$};
\end{equation}
\begin{equation}\label{prop-e}
\frac{(y)_{j+1}(q/y)_{i+1}}{(q^{-t}/y)_i}=-q^{i(t+2)+1}y^{-1}(q^{-i-1}y)_{t+2}(q^{t+1}y)_{j-t}
\quad \text{for $t=-2,\dots,j$}.
\end{equation}
\end{subequations}
\end{lem}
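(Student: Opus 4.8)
The five identities \eqref{e-ab} are all of the same shape: each side is a product of $q$-shifted factorials in the single variable $y$, and the claimed equality is an identity of rational functions in $y$ (and $q$). The plan is to verify each one by a direct computation that reduces everything to the basic telescoping relation
\[
(\alpha)_m (\alpha q^m)_k = (\alpha)_{m+k}, \qquad (\alpha)_{-m} = \frac{1}{(\alpha q^{-m})_m} = \frac{(-q/\alpha)^m q^{\binom m2}}{(q/\alpha)_m},
\]
together with the reflection rule $(\alpha)_m = (-\alpha)^m q^{\binom m2}(q^{1-m}/\alpha)_m$ used to rewrite a $q$-Pochhammer ``read backwards.'' Concretely, for \eqref{prop-b1} I would start from the left-hand side $(1/y)_i (qy)_j / (q^{-t}/y)_i$, cancel the common factor by writing $(1/y)_i = (q^{-t}/y)_i \cdot \bigl[(1/y)_i/(q^{-t}/y)_i\bigr]$ and noting that $(1/y)_i/(q^{-t}/y)_i$ is a ratio of two $q$-Pochhammer symbols of the same length $i$ that differ by a shift $q^t$ in the argument; this ratio collapses to $q^{it}$ times a short product. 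Multiplying back by $(qy)_j$ and splitting $(qy)_j = (qy)_t (q^{t+1}y)_{j-t}$ via the telescoping relation, and then applying the reflection rule to turn $(qy)_t$ into $q^{it}(q^{1-i}y)_t$ after combining with the leftover powers, yields the stated right-hand side. The power of $q$ in front is bookkept automatically by the $\binom m2$ exponents from the reflections.

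The remaining four identities \eqref{prop-b2}–\eqref{prop-e} are handled the same way, differing only in which length appears in numerator versus denominator and in the parity of the resulting sign. In \eqref{prop-c} and \eqref{prop-d} the denominator has one more factor than the matching numerator factor, which is exactly what produces the explicit $-y$ or $-y^{-1}$ (from a single uncancelled factor $(1-q^{t}/y)$ or $(1-q^{-t}y)$ reflected once), and in \eqref{prop-e} there are two extra factors on each side, giving the $q^{(i+1)t+\dots}$ prefactor and the pattern $(q^{-i-1}y)_{t+2}$. The cleanest route is probably to treat all five uniformly: write a generic lemma computing $(y^{\pm1})_{m}(q^{\mp1}\!\!\cdot\! y^{\mp1})_{m'} / (q^{-t}y^{\mp1})_{m''}$ as a prefactor times $(q^{\alpha}y^{\pm1})_{t+\delta}(q^{t+1}y^{\pm1})_{m'-t-\delta}$, and then read off the five cases by specializing the integer parameters; but since each is a one-line manipulation, doing them individually is also fine and perhaps more transparent for the reader.

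The only genuine care needed — and the main place an error could creep in — is the \emph{range of validity of the parameter $t$}. Each identity is stated for $t$ in a specific interval (e.g. $t=0,\dots,j$ in \eqref{prop-b1}, $t=-1,\dots,j-1$ in \eqref{prop-b2}, down to $t=-2,\dots,j$ in \eqref{prop-e}), and these are exactly the ranges for which the products $(q^{t+1}y)_{j-t}$, $(q^{t+1}y)_{j-t-1}$, etc.\ on the right have nonnegative length and for which the splitting $(qy)_j = (qy)_t(q^{t+1}y)_{j-t}$ makes sense with the convention $(\alpha)_0=1$; at the endpoints (say $t=j$ in \eqref{prop-b1}) one factor becomes empty and the identity degenerates gracefully. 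So the proof is: fix $i,j$ and $t$ in the stated range, perform the cancellation-and-telescoping manipulation described above for that identity, and check the boundary cases $t$ at either extreme separately to confirm the empty-product conventions are consistent. I expect no real obstacle here — this is a routine but bookkeeping-heavy $q$-series verification — so I would relegate the five computations to a short displayed calculation (or omit them as ``straightforward'') and simply emphasize that they follow from repeated application of $(\alpha)_{m+k}=(\alpha)_m(\alpha q^m)_k$ and the reflection formula $(\alpha)_m=(-\alpha)^mq^{\binom m2}(q^{1-m}/\alpha)_m$.
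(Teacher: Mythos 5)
Your proposal is correct and takes essentially the same approach as the paper: each identity is established by cancelling common factors in the Pochhammer ratios and applying the reflection formula $(\alpha)_m=(-\alpha)^mq^{\binom m2}(q^{1-m}/\alpha)_m$, with the stated $t$-ranges being exactly those for which the factorizations make sense. The only minor economy in the paper that you do not mention is that \eqref{prop-b2} and \eqref{prop-e} are not proved from scratch but obtained by the substitution $y\mapsto y/q$, $t\mapsto t+1$ in \eqref{prop-b1} and \eqref{prop-d} respectively, which saves two of the five computations.
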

Note that \eqref{prop-c} only holds for a positive integer $j$.
\begin{proof}
For $0\leq t\leq j$,
\[
\frac{(1/y)_{i}(qy)_j}{(q^{-t}/y)_i}=\frac{(q^{i-t}/y)_t(qy)_j}{(q^{-t}/y)_t}
=\frac{(-1/y)^tq^{it-\binom{t+1}{2}}(q^{1-i}y)_t(qy)_j}{(-1/y)^{t}q^{-\binom{t+1}{2}}(qy)_t}
=q^{it}(q^{1-i}y)_t(q^{t+1}y)_{j-t}.
\]

Taking $y\mapsto y/q$ and $t\mapsto t+1$ in \eqref{prop-b1} yields
\eqref{prop-b2} for $-1\leq t\leq j-1$.

For $0\leq t\leq j-1$ and $j$ a positive integer,
\[
\frac{(y)_j(q/y)_i}{(q^{-t}/y)_{i+1}}=
\frac{(y)_j(q^{i-t+1}/y)_t}{(q^{-t}/y)_{t+1}}=
\frac{(y)_j(-1/y)^tq^{it-\binom{t}{2}}(q^{-i}y)_t}{(-1/y)^{t+1}q^{-\binom{t+1}{2}}(y)_{t+1}}=
-yq^{(i+1)t}(q^{-i}y)_t(q^{t+1}y)_{j-t-1}.
\]

For $-1\leq t\leq j+1$,
\begin{multline*}
\frac{(1/y)_{i+1}(qy)_{j+1}}{(q^{-t}/y)_i}
=\frac{(q^{i-t}/y)_{t+1}(qy)_{j+1}}{(q^{-t}/y)_t}
=\frac{(-1/y)^{t+1}q^{i(t+1)-\binom{t+1}{2}}(q^{-i}y)_{t+1}(qy)_{j+1}}
{(-1/y)^tq^{-\binom{t+1}{2}}(qy)_t}\\
=-q^{i(t+1)}y^{-1}(q^{-i}y)_{t+1}(q^{t+1}y)_{j+1-t}.
\end{multline*}

Taking $y\mapsto y/q$ and $t\mapsto t+1$ in \eqref{prop-d} yields
\eqref{prop-e} for $-2\leq t\leq j$.
\end{proof}

\begin{proof}[Proof of Lemma~\ref{lem-split}]
By partial fraction decomposition of $S$ with respect to $y$,
we can rewrite $S$ as \eqref{e-sp-main} and
\begin{subequations}
\begin{equation}\label{Aij0}
A^{(0)}_{ij}=S\cdot (1-q^jy/x_i)|_{y=q^{-j}x_i} \quad \text{for $i=1,\dots,n_0$ and $j=0,\dots,c-1$,}
\end{equation}
\begin{equation}\label{Aij1}
A^{(t)}_{ij}=S\cdot (1-q^jy/x_i)|_{y=q^{-j}x_i} \quad \text{for $t=1,\dots,k-1$, $i=n_0+1,\dots,\sigma_{k-1}$ and $j=0,\dots,c-1$,}
\end{equation}
\begin{equation}\label{Aij2}
A^{(k)}_{ij}=S\cdot (1-q^jy/x_i)|_{y=q^{-j}x_i} \quad \text{for $i=\sigma_{k-1}+1,\dots,\sigma_{k}$ and $j=-1,\dots,c-1$,}
\end{equation}
and
\begin{equation}\label{Aij3}
A^{(t)}_{ij}=S\cdot (1-q^jy/x_i)|_{y=q^{-j}x_i} \quad \text{for $t=k+1,\dots,p$, $i=\sigma_{k}+1,\dots,n$ and $j=-1,\dots,c-2$.}
\end{equation}
\end{subequations}
Note that we can write $N_i=\{\sigma_{i-1}+1,\sigma_{i-1}+2,\dots,\sigma_i\}$ for $i=0,\dots,p$. Here we take $\sigma_{-1}:=0$.
Hence, for example, the condition $i=n_0+1,\dots,\sigma_{k-1}$ in the above is the same as $i\in N_1\cup \dots \cup N_{k-1}$.

Carrying out the substitution $y=q^{-j}x_i$ in $S\cdot(1-q^jy/x_i)$ for $i=1,\dots,n_0$ yields
\begin{multline}\label{A0}
A^{(0)}_{ij}=
\frac{1}{(q^{-j})_j(q)_{c-j-1}}
\prod_{l=1}^{i-1}\frac{(x_l/x_i)_{c}(qx_i/x_l)_{c}}{(q^{-j}x_i/x_l)_{c}}
\prod_{l=i+1}^{\sigma_{k-1}}\frac{(x_i/x_l)_{c}(qx_l/x_i)_{c}}{(q^{-j}x_i/x_l)_{c}}
\\
\times
\prod_{l=\sigma_{k-1}+1}^{\sigma_k}\frac{(x_i/x_l)_{c}(qx_l/x_i)_{c}}{(q^{-j-1}x_i/x_l)_{c+1}}
\prod_{l=\sigma_k+1}^{n}\frac{(x_i/x_l)_{c}(qx_l/x_i)_{c}}{(q^{-j-1}x_i/x_l)_{c}}
\prod_{\substack{1\leq v<u\leq n\\v,u\neq i}}
(x_v/x_u)_{c+\varepsilon_{uv}}(qx_u/x_v)_{c+\varepsilon_{uv}}.
\end{multline}
Using \eqref{prop-b2} with $(i,j,t,y)\mapsto (c,c,j,x_l/x_i)$,
we have
\begin{subequations}\label{e-B-2}
\begin{equation}\label{B1-2}
\frac{(x_l/x_i)_{c}(qx_i/x_l)_{c}}{(q^{-j}x_i/x_l)_{c}}
=q^{c(j+1)}\big(q^{-c}x_l/x_i\big)_{j+1}\big(q^{j+1}x_l/x_i\big)_{c-j-1} \quad \text{for $j=-1,\dots,c-1$}.
\end{equation}
Using \eqref{prop-b1} with $(i,j,t,y)\mapsto (c,c,j,x_l/x_i)$ gives
\begin{equation}\label{B2-2}
\frac{(x_i/x_l)_{c}(qx_l/x_i)_{c}}{(q^{-j}x_i/x_l)_{c}}
=q^{cj}\big(q^{1-c}x_l/x_i\big)_j\big(q^{j+1}x_l/x_i\big)_{c-j}
\quad \text{for $j=0,\dots,c$}.
\end{equation}
Using \eqref{prop-c} with $(i,j,t,y)\mapsto (c,c,j,qx_l/x_i)$ yields
\begin{equation}\label{B2-2-2}
\frac{(x_i/x_l)_{c}(qx_l/x_i)_{c}}{(q^{-j-1}x_i/x_l)_{c+1}}=
-q^{(c+1)j+1}x_l/x_i(q^{1-c}x_l/x_i)_j(q^{j+2}x_l/x_i)_{c-j-1} \quad \text{for $j=0,\dots,c-1$}.
\end{equation}
Using \eqref{prop-b1} with $(i,j,t,y)\mapsto (c,c,j+1,x_l/x_i)$ gives
\begin{equation}\label{B2-2-3}
\frac{(x_i/x_l)_{c}(qx_l/x_i)_{c}}{(q^{-j-1}x_i/x_l)_{c}}
=q^{c(j+1)}\big(q^{1-c}x_l/x_i\big)_{j+1}\big(q^{j+2}x_l/x_i\big)_{c-j-1}
\quad \text{for $j=-1,\dots,c-1$}.
\end{equation}
\end{subequations}
Substituting \eqref{e-B-2} into \eqref{A0},
we obtain \eqref{A-0}.

Carrying out the substitution $y=q^{-j}x_i$ in $S\cdot (1-q^{-j}y/x_i)$ for $i=n_0+1,\dots,\sigma_{k-1}$ and $t=1,\dots,k-1$ yields
\begin{align}\label{BB2}
A^{(t)}_{ij}&=\frac{1}{(q^{-j})_j(q)_{c-j-1}}
\prod_{l=1}^{\sigma_{t-1}}\frac{(x_l/x_i)_{c}(qx_i/x_l)_{c}}{(q^{-j}x_i/x_l)_{c}}
\prod_{l=\sigma_{t-1}+1}^{i-1}\frac{(x_l/x_i)_{c+1}(qx_i/x_l)_{c+1}}{(q^{-j}x_i/x_l)_{c}}
\\
&\quad \times
\prod_{l=i+1}^{\sigma_t}\frac{(x_i/x_l)_{c+1}(qx_l/x_i)_{c+1}}{(q^{-j}x_i/x_l)_c}
\prod_{l=\sigma_t+1}^{\sigma_{k-1}}\frac{(x_i/x_l)_{c}(qx_l/x_i)_{c}}{(q^{-j}x_i/x_l)_c}
\prod_{l=\sigma_{k-1}+1}^{\sigma_{k}}\frac{(x_i/x_l)_{c}(qx_l/x_i)_{c}}{(q^{-j-1}x_i/x_l)_{c+1}}\nonumber \\
&\quad \times \prod_{l=\sigma_{k}+1}^{n}\frac{(x_i/x_l)_{c}(qx_l/x_i)_{c}}{(q^{-j-1}x_i/x_l)_{c}}
\prod_{\substack{1\leq v<u\leq n\\v,u\neq i}}
(x_v/x_u)_{c+\varepsilon_{uv}}(qx_u/x_v)_{c+\varepsilon_{uv}}.\nonumber
\end{align}
Using \eqref{prop-e} with $(i,j,t,y)\mapsto (c,c,j,x_l/x_i)$ gives
\begin{subequations}\label{e-At}
\begin{equation}\label{e-At-1}
\frac{(x_l/x_i)_{c+1}(qx_i/x_l)_{c+1}}{(q^{-j}x_i/x_l)_c}
=-q^{c(j+2)+1}x_i/x_l(q^{-c-1}x_l/x_i)_{j+2}(q^{j+1}x_l/x_i)_{c-j}
\end{equation}
for $-2\leq j\leq c$.
Using \eqref{prop-d} with $(i,j,t,y)\mapsto (c,c,j,x_l/x_i)$ gives
\begin{equation}\label{e-At-2}
\frac{(x_i/x_l)_{c+1}(qx_l/x_i)_{c+1}}{(q^{-j}x_i/x_l)_c}
=-q^{c(j+1)}x_i/x_l(q^{-c}x_l/x_i)_{j+1}(q^{j+1}x_l/x_i)_{c+1-j}
\end{equation}
\end{subequations}
for $-1\leq j\leq c+1$.
Substituting \eqref{B1-2}, \eqref{e-At-1}, \eqref{e-At-2}, \eqref{B2-2},
\eqref{B2-2-2} and \eqref{B2-2-3} into \eqref{BB2} respectively, we obtain \eqref{A-1}.

Carrying out the substitution $y=q^{-j}x_i$ in $S\cdot (1-q^{-j}y/x_i)$ for $i=\sigma_{k-1}+1,\dots,\sigma_k$ yields
\begin{align}\label{Ak}
A^{(k)}_{ij}&=\frac{1}{(q^{-j-1})_{j+1}(q)_{c-j-1}}
\prod_{l=1}^{\sigma_{k-1}}\frac{(x_l/x_i)_{c}(qx_i/x_l)_{c}}{(q^{-j}x_i/x_l)_{c}}
\prod_{l=\sigma_{k-1}+1}^{i-1}\frac{(x_l/x_i)_{c+1}(qx_i/x_l)_{c+1}}{(q^{-1-j}x_i/x_l)_{c+1}}
\\
&\quad \times
\prod_{l=i+1}^{\sigma_k}\frac{(x_i/x_l)_{c+1}(qx_l/x_i)_{c+1}}{(q^{-1-j}x_i/x_l)_{c+1}}
\prod_{l=\sigma_k+1}^{n}\frac{(x_i/x_l)_{c}(qx_l/x_i)_{c}}{(q^{-1-j}x_i/x_l)_c}
\prod_{\substack{1\leq v<u\leq n\\v,u\neq i}}
(x_v/x_u)_{c+\varepsilon_{uv}}(qx_u/x_v)_{c+\varepsilon_{uv}}.\nonumber
\end{align}
Using \eqref{B1-2} with $(c,j)\mapsto (c+1,j+1)$ yields
\begin{equation}\label{B1-2+1}
\frac{(x_l/x_i)_{c+1}(qx_i/x_l)_{c+1}}{(q^{-j-1}x_i/x_l)_{c+1}}
=q^{(c+1)(j+2)}\big(q^{-c-1}x_l/x_i\big)_{j+2}\big(q^{j+2}x_l/x_i\big)_{c-j-1} \quad \text{for $j=-2,\dots,c-1$}.
\end{equation}
Using \eqref{B2-2} with $(c,j)\mapsto (c+1,j+1)$ gives
\begin{equation}\label{B2-2+1}
\frac{(x_i/x_l)_{c+1}(qx_l/x_i)_{c+1}}{(q^{-j-1}x_i/x_l)_{c+1}}
=q^{(c+1)(j+1)}\big(q^{-c}x_l/x_i\big)_{j+1}\big(q^{j+2}x_l/x_i\big)_{c-j}
\quad \text{for $j=-1,\dots,c$}.
\end{equation}
Substituting \eqref{B1-2}, \eqref{B1-2+1}, \eqref{B2-2+1} and \eqref{B2-2-3}
into \eqref{Ak} respectively yields \eqref{A-2}.

Carrying out the substitution $y=q^{-j}x_i$ in $S\cdot (1-q^{-j}y/x_i)$ for $i=\sigma_k+1,\dots,n$ and $t=k+1,\dots,p$ yields
\begin{align}\label{e-At4}
A^{(t)}_{ij}&=\frac{1}{(q^{-1-j})_{j+1}(q)_{c-j-2}}
\prod_{l=1}^{\sigma_{k-1}}\frac{(x_l/x_i)_{c}(qx_i/x_l)_{c}}{(q^{-j}x_i/x_l)_{c}}
\prod_{l=\sigma_{k-1}+1}^{\sigma_k}\frac{(x_l/x_i)_{c}(qx_i/x_l)_{c}}{(q^{-1-j}x_i/x_l)_{c+1}}
\\
&\quad \times
\prod_{l=\sigma_k+1}^{\sigma_{t-1}}\frac{(x_l/x_i)_{c}(qx_i/x_l)_{c}}{(q^{-1-j}x_i/x_l)_{c}}
\prod_{l=\sigma_{t-1}+1}^{i-1}\frac{(x_l/x_i)_{c+1}(qx_i/x_l)_{c+1}}{(q^{-1-j}x_i/x_l)_{c}}
\prod_{l=i+1}^{\sigma_{t}}\frac{(x_i/x_l)_{c+1}(qx_l/x_i)_{c+1}}{(q^{-j-1}x_i/x_l)_{c}}\nonumber \\
&\quad \times \prod_{l=\sigma_{t}+1}^{n}\frac{(x_i/x_l)_{c}(qx_l/x_i)_{c}}{(q^{-j-1}x_i/x_l)_{c}}
\prod_{\substack{1\leq v<u\leq n\\v,u\neq i}}
(x_v/x_u)_{c+\varepsilon_{uv}}(qx_u/x_v)_{c+\varepsilon_{uv}}.\nonumber
\end{align}
Using \eqref{prop-c} with $(i,j,t,y)\mapsto (c,c,j+1,x_l/x_i)$ yields
\begin{equation}\label{A-t-3}
\frac{(x_l/x_i)_c(qx_i/x_l)_c}{(q^{-1-j}x_i/x_l)_{c+1}}
=-q^{(c+1)(j+1)}x_l/x_i(q^{-c}x_l/x_i)_{j+1}(q^{j+2}x_l/x_i)_{c-j-2}
\quad \text{for $j=-1,\dots,c-2$}.
\end{equation}
Using \eqref{B1-2} with $j\mapsto j+1$ gives
\begin{equation}\label{B1-2+2}
\frac{(x_l/x_i)_{c}(qx_i/x_l)_{c}}{(q^{-1-j}x_i/x_l)_{c}}
=q^{c(j+2)}\big(q^{-c}x_l/x_i\big)_{j+2}\big(q^{j+2}x_l/x_i\big)_{c-j-2} \quad \text{for $j=-2,\dots,c-2$}.
\end{equation}
Using \eqref{e-At-1} with $j\mapsto j+1$ gives
\begin{equation}\label{e-At-4}
\frac{(x_l/x_i)_{c+1}(qx_i/x_l)_{c+1}}{(q^{-j-1}x_i/x_l)_c}
=-q^{c(j+3)+1}x_i/x_l(q^{-c-1}x_l/x_i)_{j+3}(q^{j+2}x_l/x_i)_{c-j-1}
\quad \text{for $j=-3,\dots,c-1$}.
\end{equation}
Using \eqref{e-At-2} with $j\mapsto j+1$ gives
\begin{equation}\label{e-At-5}
\frac{(x_i/x_l)_{c+1}(qx_l/x_i)_{c+1}}{(q^{-j-1}x_i/x_l)_c}
=-q^{c(j+2)}x_i/x_l(q^{-c}x_l/x_i)_{j+2}(q^{j+2}x_l/x_i)_{c-j}
\quad \text{for $j=-2,\dots,c$}.
\end{equation}
Substituting \eqref{B1-2}, \eqref{A-t-3}, \eqref{B1-2+2},\eqref{e-At-4}, \eqref{e-At-5}
and \eqref{B2-2-3} into \eqref{e-At4} respectively yields \eqref{A-3}.
\end{proof}

\subsection{The recursion for $D_n(0)$}\label{sec-SP1}

Using the splitting formula \eqref{e-sp-main} for $S$, we can obtain a recursion for $D_n(0)$.
That is Lemma~\ref{lem-0}. In this subsection, we give a proof of this lemma.

\begin{proof}[Proof of Lemma~\ref{lem-0}]
Using \eqref{e-sp-1} and \eqref{e-sp-main}, we obtain
\begin{align}
D_n(0)&=\sum_{t=0}^{k-1}\sum_{i\in N_t}\sum_{j=0}^{c-1}\CT_{y,x}\frac{A_{ij}^{(t)}}{1-q^jy/x_i}
+\sum_{i\in N_k}\sum_{j=-1}^{c-1}\CT_{y,x}\frac{A^{(k)}_{ij}}{1-q^jy/x_i}
+\sum_{t=k+1}^{p}\sum_{i\in N_t}\sum_{j=-1}^{c-2}\CT_{y,x}\frac{A_{ij}^{(t)}}{1-q^jy/x_i}\nonumber \\
&=\sum_{t=0}^{k-1}\sum_{i\in N_t}\sum_{j=0}^{c-1}\CT_{x}A_{ij}^{(t)}
+\sum_{i\in N_k}\sum_{j=-1}^{c-1}\CT_{x}A^{(k)}_{ij}
+\sum_{t=k+1}^{p}\sum_{i\in N_t}\sum_{j=-1}^{c-2}\CT_{x}A_{ij}^{(t)}.\label{rec-Dn-0}
\end{align}
Note that $n_k=\max\{n_1,\dots,n_p\}$.
Since $A_{ij}^{(0)}$, $A_{ij}^{(t)}(t\in \{1,\dots,p\}\setminus \{k\})$ and $A_{ij}^{(k)}$
are polynomials in $x_i^{-1}$
with degrees at least $n_k$, $n_k-n_t+1$ and $0$ respectively,
the above equation \eqref{rec-Dn-0} reduces to
\begin{equation*}\label{rec-Dn-1}
D_n(0)=\sum_{i\in N_k}\sum_{j=-1}^{c-1}\CT_{x}A^{(k)}_{ij}.
\end{equation*}
By the expression for $A^{(k)}_{ij}$ in \eqref{A-2}, we have
\begin{multline}\label{rec-Dn-2}
D_n(0)=\sum_{i\in N_k}\sum_{j=-1}^{c-1}\CT_{x}
\frac{\prod_{\substack{1\leq u<v\leq n\\u,v\neq i}}(x_u/x_v)_{c+\varepsilon_{uv}}(qx_v/x_u)_{c+\varepsilon_{uv}}}
{(q^{-j-1})_{j+1}(q)_{c-j-1}}\\
\times\prod_{l=1}^{\sigma_{k-1}}q^{c(j+1)}
\prod_{l=\sigma_{k-1}+1}^{i-1}q^{(c+1)(j+2)}
\prod_{l=i+1}^{\sigma_k}q^{(c+1)(j+1)}
\prod_{l=\sigma_k+1}^{n}q^{c(j+1)}.
\end{multline}
It is known that $N_k=\{\sigma_{k-1}+1,\dots,\sigma_k\}$ and
\[
\CT_{\x}\prod_{\substack{1\leq u<v\leq n\\u,v\neq i}}(x_u/x_v)_{c+\varepsilon_{uv}}(qx_v/x_u)_{c+\varepsilon_{uv}}
=D_{n-1}\big((n_0,\dots,n_{k-1},n_{k}-1,n_{k+1},\dots,n_p);0,0,c\big)
\]
for $i\in N_k$.
Then, compare \eqref{rec-Dn-2} with \eqref{e-rec0} in Lemma~\ref{lem-0},
we prove the lemma by showing that
\begin{multline}\label{rec-Dn-3}
\sum_{i=\sigma_{k-1}+1}^{\sigma_k}\sum_{j=-1}^{c-1}
\frac{1}{(q^{-j-1})_{j+1}(q)_{c-j-1}}\prod_{l=1}^{\sigma_{k-1}}q^{c(j+1)}
\prod_{l=\sigma_{k-1}+1}^{i-1}q^{(c+1)(j+2)}
\prod_{l=i+1}^{\sigma_k}q^{(c+1)(j+1)}
\prod_{l=\sigma_k+1}^{n}q^{c(j+1)}\\
=\frac{1-q^{n_k(c+1)}}{1-q^{c+1}}\qbinom{nc+n_k-1}{c}.
\end{multline}
Denote the left-hand side of \eqref{rec-Dn-3} by $LH$. Then
\begin{align*}
LH&=\sum_{i=\sigma_{k-1}+1}^{\sigma_k}q^{(i-1-\sigma_{k-1})(c+1)}
\sum_{j=-1}^{c-1}\frac{q^{(j+1)\big(c(n-1)+\sigma_k-\sigma_{k-1}-1\big)}}{(q^{-j-1})_{j+1}(q)_{c-j-1}}\\
&=\frac{1-q^{n_k(c+1)}}{1-q^{c+1}}
\sum_{j=0}^{c}\frac{q^{j\big(c(n-1)+\sigma_k-\sigma_{k-1}-1\big)}}{(q^{-j})_{j}(q)_{c-j}}
=\frac{1-q^{n_k(c+1)}}{1-q^{c+1}}\qbinom{nc+n_k-1}{c}.
\end{align*}
The last equation holds by Lemma~\ref{prop-sum} below.
\end{proof}
\begin{lem}\label{prop-sum}
Let $n$ and $t$ be nonnegative integers. Then
\begin{equation}\label{e-sum}
\sum_{j=0}^{t}
\frac{q^{j(n-t)}}
{(q^{-j})_{j}(q)_{t-j}}
=\qbinom{n}{t}.
\end{equation}
\end{lem}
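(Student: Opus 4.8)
The plan is to recognize the left-hand side as a $q$-binomial theorem in disguise, after rewriting the factor $(q^{-j})_j$ in the denominator. First I would clear the negative powers: since $(q^{-j})_j = \prod_{m=0}^{j-1}(1-q^{m-j})$, one has $(q^{-j})_j = (-1)^j q^{-\binom{j+1}{2}}(q)_j$. Substituting this gives
\[
\sum_{j=0}^{t}\frac{q^{j(n-t)}}{(q^{-j})_j (q)_{t-j}}
=\sum_{j=0}^{t}\frac{(-1)^j q^{\binom{j+1}{2}} q^{j(n-t)}}{(q)_j (q)_{t-j}}
=\frac{1}{(q)_t}\sum_{j=0}^{t}(-1)^j q^{\binom{j}{2}} q^{j(n-t+1)}\qbinom{t}{j}.
\]
Here I used $\binom{j+1}{2}=\binom{j}{2}+j$ and $\qbinom{t}{j}=(q)_t/((q)_j(q)_{t-j})$.

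Next I would apply the finite $q$-binomial theorem in the form $\sum_{j=0}^{t}(-1)^j q^{\binom{j}{2}}\qbinom{t}{j} z^j = (z;q)_t$, taken with $z=q^{n-t+1}$. This yields
\[
\sum_{j=0}^{t}\frac{q^{j(n-t)}}{(q^{-j})_j (q)_{t-j}}
=\frac{(q^{n-t+1};q)_t}{(q)_t}=\qbinom{n}{t},
\]
which is exactly \eqref{e-sum}, using the definition $\qbinom{n}{t}=(q^{n-t+1})_t/(q)_t$ stated in the introduction. One should note that the identity holds as a rational-function identity in $q$, hence for all $n,t$, including the degenerate cases where $n<t$ makes the right side zero (then $(q^{n-t+1})_t$ contains the factor $1-q^0=0$), so no separate case analysis is needed.

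The argument is essentially a one-line manipulation, so there is no serious obstacle; the only point requiring a little care is the sign/power bookkeeping in the reflection formula $(q^{-j})_j=(-1)^j q^{-\binom{j+1}{2}}(q)_j$ and making sure the exponents of $q$ combine correctly so that the summand matches the standard $q$-binomial theorem. An equally clean alternative, if one prefers to avoid the reflection identity, is to prove \eqref{e-sum} by induction on $t$ using the Pascal-type recurrence $\qbinom{n}{t}=\qbinom{n-1}{t-1}+q^t\qbinom{n-1}{t}$ together with the recurrence $(q^{-j})_j = (1-q^{-j})(q^{1-j})_{j-1}$; but the $q$-binomial-theorem route is the shortest.
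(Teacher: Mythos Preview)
Your proof is correct and follows essentially the same route as the paper: rewrite $(q^{-j})_j=(-1)^jq^{-\binom{j+1}{2}}(q)_j$, factor out $1/(q)_t$, and apply the finite $q$-binomial theorem $(z)_t=\sum_{j=0}^t q^{\binom{j}{2}}\qbinom{t}{j}(-z)^j$ with $z=q^{n-t+1}$. The paper's argument is identical step for step; your added remark on the degenerate case $n<t$ is a nice touch but not needed.
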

\begin{proof}
We can rewrite the left-hand side of \eqref{e-sum} as
\begin{equation}\label{e-sum2}
\sum_{j=0}^{t}
\frac{(-1)^jq^{j(n-t)+\binom{j+1}{2}}}{(q)_{j}(q)_{t-j}}
=\frac{1}{(q)_t} \sum_{j=0}^{t}q^{\binom{j}{2}}\qbinom{t}{j}(-q^{n-t+1})^j.
\end{equation}
Using the well-known $q$-binomial theorem \cite[Theorem 3.3]{andrew-qbinomial}
\begin{equation}\label{e-qbinom}
(z)_t=\sum_{j=0}^tq^{\binom{j}{2}}\qbinom{t}{j}(-z)^j
\end{equation}
with $z\mapsto q^{n-t+1}$, we have
\[
\frac{1}{(q)_t} \sum_{j=0}^{t}q^{\binom{j}{2}}\qbinom{t}{j}(-q^{n-t+1})^j
=(q^{n-t+1})_t/(q)_t=\qbinom{n}{t}. \qedhere
\]
\end{proof}

\subsection{A family of vanishing coefficients}\label{sec-SP2}

Recall that
\[
L_{n_0,\dots,n_p}(0,0;x)=\prod_{1\leq i<j\leq n}(x_i/x_j)_{c+\varepsilon_{ij}}(qx_j/x_i)_{c+\varepsilon_{ij}}.
\]
By Lemma~\ref{lem-split},
we find a family of vanishing coefficients of $L_{n_0,\dots,n_p}(0,0;x)$ in this subsection.
\begin{lem}\label{lem-vanishing}
Let $n_0$, $h_1,\cdots,h_p$ be integers such that  $2\leq n_0\leq n-1$ and $h_1+\cdots+h_p\leq n_0-1$.
Then
\begin{equation}\label{V0-2}
\CT_{\x}\frac{x_1x_2\cdots x_{n_0}}{\prod_{u=1}^p\prod_{v\in N_u}x_v^{h_u}}\prod_{l=1}^nx_l^{t_l}
\times \prod_{1\leq i<j\leq n}(x_i/x_j)_{c+\varepsilon_{ij}}(qx_j/x_i)_{c+\varepsilon_{ij}}=0,
\end{equation}
where $t_1,t_2,\dots,t_n$ are nonnegative integers such that
\begin{equation}\label{e-tl}
\sum_{l=1}^nt_l=\sum_{u=1}^ph_un_u-n_0.
\end{equation}
\end{lem}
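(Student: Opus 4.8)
The plan is to use the splitting formula of Lemma~\ref{lem-split} as the engine of an induction on $n$. Because every factor of the denominator of $S=S((n_0,\dots,n_p),c;\x,y)$ equals $1$ at $y=0$ (recall the convention \eqref{e-explain}), the coefficient of $y^0$ in $S$ is exactly $L_{n_0,\dots,n_p}(0,0;x)$; taking the $y^0$-coefficient of both sides of \eqref{e-sp-main} and using that each $A^{(t)}_{ij}$ is free of $y$ gives
\[
L_{n_0,\dots,n_p}(0,0;x)=\sum_{t=0}^{k-1}\sum_{i\in N_t}\sum_{j=0}^{c-1}A_{ij}^{(t)}+\sum_{i\in N_k}\sum_{j=-1}^{c-1}A^{(k)}_{ij}+\sum_{t=k+1}^{p}\sum_{i\in N_t}\sum_{j=-1}^{c-2}A_{ij}^{(t)}.
\]
First I would multiply this by $M:=\dfrac{x_1\cdots x_{n_0}}{\prod_{u=1}^p\prod_{v\in N_u}x_v^{h_u}}\prod_{l=1}^n x_l^{t_l}$ and apply $\CT_{\x}$, so that the left side of \eqref{V0-2} becomes $\sum_{t,i,j}\CT_{\x}(M\,A^{(t)}_{ij})$; it then suffices to prove each summand vanishes. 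Note that $M$ is homogeneous of degree $0$, since its degree is $n_0+\sum_l t_l-\sum_u h_un_u=0$ by \eqref{e-tl}.

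Fix an admissible triple $(t,i,j)$ with $i\in N_t$. Reading off \eqref{A-0}--\eqref{A-3}, each $A^{(t)}_{ij}$ factors as a scalar times the $x_i$-free product $L^{(i)}:=\prod_{u<v,\,u,v\ne i}(x_u/x_v)_{c+\varepsilon_{uv}}(qx_v/x_u)_{c+\varepsilon_{uv}}$ --- which is the Baker--Forrester product for the configuration obtained by deleting $i$ from its block, so that $n$ drops by one --- times an explicit product $P^{(t)}_{ij}$ of $q$-shifted factorials in the ratios $x_l/x_i$; in particular $P^{(t)}_{ij}$ is a sum of monomials each homogeneous of degree $0$. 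Consequently $\CT_{x_i}(M\,A^{(t)}_{ij})$ equals a scalar times $L^{(i)}$ times a Laurent polynomial $R^{(t)}_{ij}$ in $x^{(i)}$ that is again homogeneous of degree $0$. Moreover, for $t\ne k$ the bold factors of \eqref{A-0}--\eqref{A-3} show $P^{(t)}_{ij}$ is divisible by $x_i^{-n_k}$ when $t=0$ and by $x_i^{-(n_k-n_t+1)}$ when $1\le t\le p$, $t\ne k$; hence $R^{(t)}_{ij}=0$, and the summand vanishes, unless the $x_i$-exponent of $M$ (which is $1+t_i$ if $i\in N_0$ and $t_i-h_t$ if $i\in N_t$ with $t\ge1$) reaches that power of $x_i^{-1}$.

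For the summands that survive this first cut I would appeal to the inductive hypothesis. Each monomial $\rho$ of $R^{(t)}_{ij}$, being homogeneous of degree $0$, decomposes as a distinguished factor times a weighted factor times a free monomial adapted to the deleted-block configuration: the factor $x_1\cdots x_{n_0}$ of $M$ (or its length-$(n_0-1)$ analogue when $i\in N_0$) persists through the extraction, the weights of the remaining blocks stay unchanged so that $\sum_u h_u\le n_0-1$ is inherited, and homogeneity of degree $0$ is exactly the analogue of \eqref{e-tl}. Thus $\CT_{x^{(i)}}(L^{(i)}\rho)=0$ by induction, whence $\CT_{\x}(M\,A^{(t)}_{ij})=0$. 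The induction is rooted in configurations too small to admit any admissible data --- for instance $n=n_0+1$, where \eqref{e-tl} forces $\sum_l t_l<0$ --- for which the claim is vacuous.

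The hard part will be this last reduction: one must check, monomial by monomial, that the $q$-shifted factorials in $P^{(t)}_{ij}$ never spoil the form of \eqref{V0-2} for the smaller configuration --- that the exponents transferred onto the free variables remain nonnegative, that each surviving block retains a uniform weight at most $n_0-1$, and, most delicately, that the case $i\in N_0$ with $n_0=2$, where deletion would drop the number of distinguished variables to $1$ and so escape the hypothesis $n_0\ge2$, is instead disposed of directly, by pitting the divisibility of $P^{(0)}_{ij}$ by $x_i^{-n_k}$ (with $n_k\ge1$) against the $x_i$-exponent $1+t_i$ of $M$ and the residual degree budget from \eqref{e-tl}. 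The hypothesis $n_0\le n-1$ is what makes $p\ge1$, so that Lemma~\ref{lem-split} applies in the first place.
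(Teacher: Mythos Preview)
Your strategy is the paper's: split $L_{n_0,\dots,n_p}(0,0;x)$ via \eqref{e-sp-main}, multiply by $M$, and for each summand either use the $x_i^{-1}$-degree coming from the bold factors to kill it outright or, after extracting $\CT_{x_i}$, reduce to a smaller instance by induction on $n$. One correction is needed, though: the block weights do \emph{not} stay unchanged under this reduction. For $i\in N_0$ the bold factor $\prod_{l\in N_k}(-x_l/x_i)$ in \eqref{A-0} shifts $h_k\mapsto h_k-1$ while $n_0\mapsto n_0-1$, so that $\sum_u h'_u\le n_0-2=n'_0-1$; for $i\in N_t$ with $1\le t\le p$, $t\ne k$, the bold factors $\prod_{l\in N_t\setminus\{i\}}(-x_i/x_l)$ and $\prod_{l\in N_k}(-x_l/x_i)$ together shift $h_t\mapsto h_t+1$ and $h_k\mapsto h_k-1$, leaving $\sum_u h'_u$ fixed while $n'_0=n_0$. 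In every case the inherited constraint $\sum_u h'_u\le n'_0-1$ holds, but through this bookkeeping rather than by invariance.

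Two further points the paper handles that your outline skips: the case $c=0$ is dispatched directly (the product $\prod_{1\le i<j\le n}$ degenerates and the claim follows from $\CT_{x_i}$ for any $i\le n_0$) before the splitting formula is invoked; and the induction is rooted not in a vacuous configuration but in the explicit instance $n=4$, $n_0=2$, $p=1$, $h_1=1$, $t_l\equiv 0$, which is non-vacuous and is settled by a direct computation. Your proposed direct treatment of the $n_0=2$, $i\in N_0$ boundary is a reasonable alternative to the paper's appeal to the induction hypothesis there, but you will still need that base case.
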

Note that some $h_u$ can be negative integers.
This lemma reduces to \cite[Lemma 3.10]{XZ2023}
for $p=1$.
\begin{proof}
Denote by $\mathfrak{L}$ the left-hand side of \eqref{V0-2}.
If $c=0$ then $\mathfrak{L}$ reduces to
\[
\CT_{\x}\frac{x_1x_2\cdots x_{n_0}}{\prod_{u=1}^p\prod_{v\in N_u}x_v^{h_u}}
\prod_{l=1}^nx_l^{t_l}\times
\prod_{n_0+1\leq i<j\leq n}(x_i/x_j)_{\varepsilon_{ij}}(qx_j/x_i)_{\varepsilon_{ij}},
\]
which is clearly 0 by taking the constant term with respect to $x_i$ for any $i\in \{1,\dots,n_0\}$.
Then we prove $\mathfrak{L}=0$ for $c\geq 1$ below by the induction on $n$.

We first discuss the induction basis.
The $n=3$ and $n_0=2$ case: this forces $p=1$ and $h_1=1$ by $h_1\leq 1=n_0-1$.
In this case, there is no $t_l$ can satisfy \eqref{e-tl}.
The $n=4$ and $n_0=2$ case: this also forces $p=1$ and $h_1=1$, otherwise
there is also no $t_l$ can satisfy \eqref{e-tl}.
Therefore, the initial case satisfies the condition in the lemma should be $n=4, n_0=2, p=1, h_1=1$ and all the $t_l=0$.
That is
\[
\CT_{\x}\frac{x_1x_2}{x_3x_4}
\prod_{1\leq i<j\leq 4}(x_i/x_j)_{c+\chi(i\geq 3)}(qx_j/x_i)_{c+\chi(i\geq 3)}.
\]
The above constant term equals zero by \cite[Example 3.13]{XZ2023}.

We then proceed the induction by assuming that the lemma holds for $n\mapsto n-1$.

It is easy to see that
\begin{align*}
\mathfrak{L}&=\CT_{\x,y}\frac{x_1x_2\cdots x_{n_0}}{\prod_{u=1}^p\prod_{v\in N_u}x_v^{h_u}}
\prod_{l=1}^nx_l^{t_l} \frac{\prod_{1\leq i<j\leq n}(x_i/x_j)_{c+\varepsilon_{ij}}(qx_j/x_i)_{c+\varepsilon_{ij}}}
{\prod_{i=0}^{k-1}\prod_{l\in N_i}(y/x_l)_{c}
\prod_{l\in N_k}(q^{-1}y/x_l)_{c+1}
\prod_{i=k+1}^{p}\prod_{l\in N_i}(q^{-1}y/x_l)_{c}}\\
&=\CT_{\x,y}\frac{x_1x_2\cdots x_{n_0}}{\prod_{u=1}^p\prod_{v\in N_u}x_v^{h_u}}
\prod_{l=1}^nx_l^{t_l}\times S\big((n_0,\dots,n_p),c;x,y\big),
\end{align*}
where all forms of $1/(1-q^zy/x_l)$ satisfy \eqref{e-explain} and $S$ is defined in \eqref{def-L}.
Using the splitting formula \eqref{e-sp-main} for $S$ yields
\begin{multline*}
\mathfrak{L}=\CT_{\x,y}\frac{x_1x_2\cdots x_{n_0}}{\prod_{u=1}^p\prod_{v\in N_u}x_v^{h_u}}
\prod_{l=1}^nx_l^{t_l}\\
\times\bigg(
\sum_{t=0}^{k-1}\sum_{i\in N_t}\sum_{j=0}^{c-1}\frac{A_{ij}^{(t)}}{1-q^jy/x_i}
+\sum_{i\in N_k}\sum_{j=-1}^{c-1}\frac{A^{(k)}_{ij}}{1-q^jy/x_i}
+\sum_{t=k+1}^{p}\sum_{i\in N_t}\sum_{j=-1}^{c-2}\frac{A_{ij}^{(t)}}{1-q^jy/x_i}
\bigg).
\end{multline*}
Extracting the constant term with respect to $y$ gives
\begin{equation}\label{V1}
\mathfrak{L}=\CT_{\x,y}\frac{x_1x_2\cdots x_{n_0}}{\prod_{u=1}^p\prod_{v\in N_u}x_v^{h_u}}
\prod_{l=1}^nx_l^{t_l}
\times\bigg(
\sum_{t=0}^{k-1}\sum_{i\in N_t}\sum_{j=0}^{c-1}A_{ij}^{(t)}
+\sum_{i\in N_k}\sum_{j=-1}^{c-1}A^{(k)}_{ij}
+\sum_{t=k+1}^{p}\sum_{i\in N_t}\sum_{j=-1}^{c-2}A_{ij}^{(t)}
\bigg).
\end{equation}
We use $\hat{x}_i$ to denote the omission of $x_i$.
Let
\begin{subequations}\label{C}
\begin{equation}\label{C0}
C_0:=\CT_{\x}\frac{x_1\cdots \hat{x_i}\cdots x_{n_0}}{\prod_{u=1}^p\prod_{v\in N_u}x_v^{h_u}}
\times x_i^{t_i+1}\prod_{\substack{l=1\\l\neq i}}^nx_l^{t_l}\times A^{(0)}_{ij}
\end{equation}
for $i=1,\dots,n_0$ and $j=0,\dots,c-1$;
\begin{equation}\label{C1}
C_1:=\CT_{\x}\frac{x_1\cdots x_{n_0}}{\prod_{u=1}^p\prod_{\substack{v\in N_u\\v\neq i}}x_v^{h_u}}
\times x_i^{t_i-h_t}\prod_{\substack{l=1\\l\neq i}}^nx_l^{t_l}\times A^{(t)}_{ij}
\end{equation}
for $i\in N_1\cup \cdots \cup N_{k-1}$ and $j=0,\dots,c-1$;
\begin{equation}\label{C2}
C_2:=\CT_{\x}\frac{x_1\cdots x_{n_0}}{\prod_{u=1}^p\prod_{\substack{v\in N_u\\v\neq i}}x_v^{h_u}}
\times x_i^{t_i-h_k}\prod_{\substack{l=1\\l\neq i}}^nx_l^{t_l}\times A^{(k)}_{ij}
\end{equation}
for $i\in N_{k}$ and $j=-1,\dots,c-1$;
\begin{equation}\label{C3}
C_3:=\CT_{\x}\frac{x_1\cdots x_{n_0}}{\prod_{u=1}^p\prod_{\substack{v\in N_u\\v\neq i}}x_v^{h_u}}
\times x_i^{t_i-h_t}\prod_{\substack{l=1\\l\neq i}}^nx_l^{t_l}\times A^{(t)}_{ij}
\end{equation}
\end{subequations}
for $i\in N_{k+1}\cup \cdots \cup N_p$ and $j=-1,\dots,c-2$.
In $C_1$ and $C_3$, for a given $i$ there exists a unique $t\in \{0,\dots,p\}$ such that $i\in N_t$.
Hence, the parameter $t$ is determined by $i$ in each $A_{ij}^{(t)}$.
Also note that we suppress the parameters in the $C_l$.
By \eqref{V1}, it is clear that $\mathfrak{L}$ is a finite sum of
the forms of $C_l$. We show that all the $C_l=0$.

Notice that $A_{ij}^{(0)}$ is a polynomial in $x_i^{-1}$ with degree at least $n_k$.
If $t_i+1<n_k$, then $C_0=0$ since the Laurent polynomial in \eqref{C0} is actually
a polynomial in $x_i^{-1}$ with no constant term.
For $t_i+1\geq n_k$,
by the expression for $A^{(0)}_{ij}$ in \eqref{A-0} and taking the constant term
with respect to $x_i$, the constant term $C_0$ can be written as a finite sum of the form
\begin{align}\label{C0-1}
&c\times\CT_{x^{(i)}}\frac{x_1\cdots \hat{x_i}\cdots x_{n_0}}
{\prod_{\substack{u=1\\u\neq k}}^p\prod_{v\in N_u}x_v^{h_u}\prod_{v\in N_k}x_v^{h_k-1}}
\prod_{\substack{l=1\\l\neq i}}^nx_l^{t'_l}
\times \prod_{\substack{1\leq u<v\leq n\\u,v\neq i}}(x_u/x_v)_{c+\varepsilon_{uv}}(qx_v/x_u)_{c+\varepsilon_{uv}}\nonumber\\
&\quad =c\times\CT_{x^{(i)}}\frac{\prod_{v\in N_0'}x_v}
{\prod_{u=1}^p\prod_{v\in N'_u}x_v^{h'_u}}
\prod_{\substack{l=1\\l\neq i}}^nx_l^{t'_l}L_{n_0-1,n_1,\dots,n_p}(0,0;x^{(i)}).
\end{align}
Here $N'_0=N_0\setminus \{i\}$, $N'_i=N_i$ for $i=1,\dots,p$,
$h'_k=h_k-1$, $h'_v=h_v$ for $i\in \{1,\dots,p\}\setminus \{k\}$,
$c\in \mathbb{R}(q)$, the $t'_l$ are nonnegative integers such that
$\sum_{\substack{l=1\\l\neq i}}^nt'_l=\sum_{u=1}^ph'_un_u-n_0+1$, and $x^{(i)}=(x_1,\dots,\hat{x_i},\dots,x_n)$.
Since $\sum_{u=1}^ph'_u=\sum_{u=1}^ph_u-1\leq n_0-2$,
the constant term in \eqref{C0-1} equals 0 by the induction hypothesis.
Then we have $C_0=0$.

It is known that $A^{(t)}_{ij}$ is a polynomial in $x_i^{-1}$ with degree at least $n_k-n_t+1$
for $t=1,\dots,k-1$.
If $t_i-h_t<n_k-n_t+1$ then $C_1=0$ since the Laurent polynomial in \eqref{C1}
is in fact a polynomial in $x_i^{-1}$ with no constant term.
If $t_i-h_t\geq n_k-n_t+1$,  by the expression for $A^{(t)}_{ij}$ in \eqref{A-1} and taking the constant term with respect to $x_i$,
the constant term $C_1$ can be written as a finite sum of the form
\begin{align}\label{C1-1}
&c\times\frac{x_1\cdots \cdots x_{n_0}}
{\prod_{\substack{u=1\\u\neq t,k}}^p\prod_{v\in N_u}x_v^{h_u}
\prod_{\substack{v\in N_t\\v\neq i}}x_v^{h_t+1}
\prod_{v\in N_k}x_v^{h_k-1}}
\prod_{\substack{l=1\\l\neq i}}^nx_l^{t'_l}
\times \prod_{\substack{1\leq u<v\leq n\\u,v\neq i}}(x_u/x_v)_{c+\varepsilon_{uv}}(qx_v/x_u)_{c+\varepsilon_{uv}}\nonumber\\
&\quad =c\times\CT_{x^{(i)}}\frac{\prod_{v\in N_0'}x_v}
{\prod_{u=1}^p\prod_{v\in N'_u}x_v^{h'_u}}
\prod_{\substack{l=1\\l\neq i}}^nx_l^{t'_l}L_{n_0,n_1,\dots,n_{t-1},n_t-1,n_{t+1},\dots,n_p}(0,0;x^{(i)}).
\end{align}
Here $N'_0=N_0$, $N'_i=N_i$ for $i\in \{1,\dots,p\}\setminus \{t\}$,
$N'_t=N_t\setminus \{i\}$, $n'_i=|N'_i|$,
$h'_k=h_k-1$, $h'_t=h_t+1$, $h'_v=h_v$ for $i\in \{1,\dots,p\}\setminus \{t,k\}$,
$c\in \mathbb{R}(q)$ and the $t'_l$ are nonnegative integers such that
$\sum_{\substack{l=1\\l\neq i}}^nt'_l=\sum_{u=1}^ph'_un'_u-n_0$.
The sum
\[
\sum_{u=1}^ph'_{u}=\sum_{\substack{u=1\\u\neq t,k}}^ph_u+(h_t+1)+(h_k-1)=\sum_{u=1}^ph_u\leq n_0-1.
\]
Then the constant term in \eqref{C1-1} equals 0 by the induction hypothesis if $n_0<n-1$.
For $n_0=n-1$, the constant term $C_1$ does not exist (or we can say $C_1=0$ for this case).
Hence, we have $C_1=0$.

Since $A^{(k)}_{ij}$ is a polynomial in $x_i^{-1}$, the constant term \eqref{C2}
vanishes if $t_i<h_k$. If $t_i\geq h_k$, using the expression for $A^{(k)}_{ij}$
in \eqref{A-2} and taking the constant term with respect to $x_i$,
the constant term $C_2$ can be written as a finite sum of the form
\begin{align}\label{C2-1}
&c\times\frac{x_1\cdots \cdots x_{n_0}}
{\prod_{\substack{u=1\\u\neq k}}^p\prod_{v\in N_u}x_v^{h_u}
\prod_{\substack{v\in N_k\\v\neq i}}x_v^{h_k}}
\prod_{\substack{l=1\\l\neq i}}^nx_l^{t'_l}
\times \prod_{\substack{1\leq u<v\leq n\\u,v\neq i}}(x_u/x_v)_{c+\varepsilon_{uv}}(qx_v/x_u)_{c+\varepsilon_{uv}}\nonumber\\
&\quad =c\times\CT_{x^{(i)}}\frac{\prod_{v\in N_0'}x_v}
{\prod_{u=1}^p\prod_{v\in N'_u}x_v^{h'_u}}
\prod_{\substack{l=1\\l\neq i}}^nx_l^{t'_l}L_{n_0,n_1,\dots,n_{k-1},n_k-1,n_{k+1},\dots,n_p}(0,0;x^{(i)}).
\end{align}
Here $N'_i=N_i$ for $i\in \{0,\dots,p\}\setminus \{k\}$,
$N'_k=N_k\setminus \{i\}$, $n'_i=|N'_i|$,
$h'_v=h_v$ for $i\in \{1,\dots,p\}$,
$c\in \mathbb{R}(q)$ and the $t'_l$ are nonnegative integers such that
$\sum_{\substack{l=1\\l\neq i}}^nt'_l=\sum_{u=1}^ph'_un'_u-n_0$.
The sum
\[
\sum_{u=1}^ph'_{u}=\sum_{u=1}^ph_u\leq n_0-1.
\]
Then the constant term \eqref{C2-1} equals 0 by the induction hypothesis if $n_0<n-1$.
For $n_0=n-1$, the constant term \eqref{C2-1} also vanishes since it can not be homogeneous.
Then we have $C_2=0$.

The constant term $C_3=0$ by the similar argument as that of $C_1$.

In conclusion, we obtain that all the forms of $C_0,C_1,C_2,C_3$ defined in \eqref{C}
are zeros. Then $\mathfrak{L}=0$, completing the proof.
\end{proof}

\section{The Gessel--Xin method to evaluate constant terms}\label{sec-GX}

In this subsection, we briefly introduce the Gessel--Xin method to evaluate
constant terms of rational functions.

The method is based on the theory of iterated Laurent series. Xin constructed
the field of iterated Laurent series in his Ph.D thesis \cite{xinresidue}.
In the following of this paper, we work in the field of iterated
Laurent series $\mathbb{C}\langle\!\langle x_n, x_{n-1},\dots,x_0\rangle\!\rangle
=\mathbb{C}(\!(x_n)\!)(\!(x_{n-1})\!)\cdots (\!(x_0)\!)$.
Elements of $\mathbb{C}\langle\!\langle x_n,x_{n-1},\dots,x_0\rangle\!\rangle$
are regarded first as Laurent series in $x_0$, then as
Laurent series in $x_1$, and so on.
The application of this theory to constant terms first appeared
in Gessel and Xin's~\cite{GX} Laurent series proof of the Zeilberger--Bressoud
$q$-Dyson constant term identity \cite{andrews1975,zeil-bres1985}.
For more applications to constant terms, the reader could refer to \cite{LXZ, xiniterate, XZ, XZ2023, Zhou, Zhou23}.
 The most applicable fact in what is to follow is that the field $\mathbb{C}(x_0,\dots,x_n)$ of
rational functions in the variables $x_0,\dots,x_n$ with coefficients in $\mathbb{C}$
forms a subfield of $\mathbb{C}\langle\!\langle x_n, x_{n-1},\dots,x_0\rangle\!\rangle$, so that every rational function can be identified as its unique Laurent series expansion.

In our approach, the next expansion of $1/(1-cx_i/x_j)$ for $c\in \mathbb{C}\setminus \{0\}$
is fundamental:
\[
\frac{1}{1-c x_i/x_j}=
\begin{cases} \displaystyle \sum_{l\geq 0} c^l (x_i/x_j)^l
& \text{if $i<j$}, \\[5mm]
\displaystyle -\sum_{l<0} c^l (x_i/x_j)^l
& \text{if $i>j$}.
\end{cases}
\]
Thus,
\begin{equation}
\label{e-ct}
\CT_{x_i} \frac{1}{1-c x_i/x_j} =
\begin{cases}
    1 & \text{if $i<j$}, \\
    0 & \text{if $i>j$}, \\
\end{cases}
\end{equation}
where, for $f\in \mathbb{C}\langle\!\langle x_n, x_{n-1},\dots,x_0\rangle\!\rangle$,
the operator $\displaystyle\CT_{x_i}$ also means to take the constant term
with respect to $x_i$.
The constant term operators defined in $\mathbb{C}\langle\!\langle x_n,\dots,x_0\rangle\!\rangle$ have the property of commutativity:
\[
\CT_{x_i} \CT _{x_j} f = \CT_{x_j} \CT_{x_i} f
\]
for any $i$ and $j$.
This means that we are taking constant term in a set of variables.
Then $\CT\limits_x$ is well-defined in this field.

The following lemma has appeared previously in~\cite{GX}.
It is a basic tool for
extracting constant terms from rational functions.
\begin{lem}\label{lem-almostprop}
For a positive integer $m$, let $p(x_k)$ be a Laurent polynomial
in $x_k$ of degree at most $m-1$ with coefficients in
$K\langle\!\langle x_n,\dots,x_{k-1},x_{k+1},\dots,x_0\rangle\!\rangle$.
Let $0\leq i_1\leq\dots\leq i_m\leq n$ such that all $i_r\neq k$,
and define
\begin{equation}\label{e-defF}
f=\frac{p(x_k)}{\prod_{r=1}^m (1-c_r x_k/x_{i_r})},
\end{equation}
where $c_1,\dots,c_m\in \mathbb{C}\setminus \{0\}$ such that $c_r\neq c_s$ if $x_{i_r}=x_{i_s}$.
Then
\begin{equation}\label{e-almostprop}
\CT_{x_k} f=\sum_{\substack{r=1 \\[1pt] i_r>k}}^m
\big(f\,(1-c_rx_k/x_{i_r})\big)\Big|_{x_k=c_r^{-1}x_{i_r}}.
\end{equation}
\end{lem}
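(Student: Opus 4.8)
The plan is to decompose $f$ by partial fractions in the single variable $x_k$, over the coefficient field $K\langle\!\langle x_n,\dots,x_{k-1},x_{k+1},\dots,x_0\rangle\!\rangle$, and then to read off the constant term from the elementary expansion \eqref{e-ct}. First I would note that, regarded as polynomials in $x_k$, the factors $1-c_rx_k/x_{i_r}$ in \eqref{e-defF} have pairwise distinct roots $x_k=c_r^{-1}x_{i_r}$: when $i_r\neq i_s$ these are distinct monomials, and when $i_r=i_s$ the hypothesis $c_r\neq c_s$ does the job. Hence $f$ has a partial fraction expansion
\[
f=L(x_k)+\sum_{r=1}^m\frac{A_r}{1-c_rx_k/x_{i_r}},\qquad
A_r=\big(f\,(1-c_rx_k/x_{i_r})\big)\big|_{x_k=c_r^{-1}x_{i_r}},
\]
with $L(x_k)$ a Laurent polynomial in $x_k$ whose coefficients lie in the coefficient field. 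Each evaluation defining $A_r$ is legitimate: substituting the nonzero monomial $c_r^{-1}x_{i_r}$ for $x_k$ sends $p(x_k)$ to a bona fide element of the field and sends each remaining factor $1-c_sx_k/x_{i_s}$ ($s\neq r$) to $1-(c_s/c_r)(x_{i_r}/x_{i_s})$, which is a unit in the field of iterated Laurent series.

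The only real content is to show that $L(x_k)$ contains no term of degree $\geq 0$ in $x_k$, so that $\CT_{x_k}L(x_k)=0$; note that $L$ may be genuinely nonzero, since $p(x_k)$ is allowed to carry negative powers of $x_k$. Clearing denominators, with $D(x_k):=\prod_{r=1}^m(1-c_rx_k/x_{i_r})$, one obtains $p(x_k)-L(x_k)D(x_k)=\sum_{r=1}^m A_r\prod_{s\neq r}(1-c_sx_k/x_{i_s})$, whose $x_k$-degree is at most $m-1$. Since $\deg_{x_k}p(x_k)\leq m-1$ by hypothesis while $D(x_k)$ has $x_k$-degree exactly $m$ with nonzero leading coefficient $\prod_r(-c_r/x_{i_r})$, a nonnegative-degree term in $L(x_k)$ would force $L(x_k)D(x_k)$, hence $p(x_k)-L(x_k)D(x_k)$, to have $x_k$-degree $\geq m$, a contradiction. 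Therefore $L(x_k)$ involves only negative powers of $x_k$ and drops out upon taking the constant term.

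Finally I would apply $\CT_{x_k}$ to the partial fraction expansion: by linearity and the previous step, $\CT_{x_k}f=\sum_{r=1}^m A_r\,\CT_{x_k}(1-c_rx_k/x_{i_r})^{-1}$, and \eqref{e-ct} (with $x_i,x_j$ there playing the roles of $x_k,x_{i_r}$ here) gives $\CT_{x_k}(1-c_rx_k/x_{i_r})^{-1}=1$ if $k<i_r$ and $0$ if $k>i_r$; recall $i_r\neq k$. Keeping only the indices with $i_r>k$ yields precisely \eqref{e-almostprop}. The one step that requires care is the degree comparison eliminating the Laurent polynomial part $L(x_k)$ — everything else is routine bookkeeping with partial fractions inside the iterated Laurent series field.
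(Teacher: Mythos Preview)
Your argument is correct. The paper does not actually prove this lemma; it is quoted from \cite{GX} as a known tool, so there is no in-paper proof to compare against. Your approach---partial fraction decomposition in $x_k$ over the ambient iterated Laurent series field, the degree count showing the Laurent polynomial remainder $L(x_k)$ carries only negative powers of $x_k$, and then invoking \eqref{e-ct} termwise---is exactly the standard argument and matches how the result is established in the Gessel--Xin source.
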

Note that we extended Lemma~\ref{lem-almostprop} to \cite[Lemma~3.1]{LXZ},
in which $p(x_k)$ is a Laurent polynomial in $x_k$ of degree at most $m$.
In this paper, we need to deal with the cases when the degree in $x_k$ of $p(x_k)$ is larger than $m$. This technique first appeared in \cite{XZ2023}.

If $f$ is a rational function of the form
\eqref{e-defF} with respect to $x_0$. Then we can apply Lemma~\ref{lem-almostprop}
to $f$ with respect to $x_0$ and obtain \eqref{e-almostprop} for $k=0$.
We can write \eqref{e-almostprop} as a finite sum of the form
\[
\CT_{x_0} f=\sum_{1\leq u_1\leq n} f_{(u_1)}.
\]
Note that $f_{(u_1)}$ may be a sum. It is still a multi-variable rational function without the variable $x_0$.
If furthermore, one of the $f_{(u_1)}$ is still of the form \eqref{e-defF} with respect to $x_{u_1}$, then we can apply Lemma~\ref{lem-almostprop} again to the $f_{(u_1)}$ and obtain
\[
\CT_{x_{u_1}} f_{(u_1)}=\sum_{1\leq u_1<u_2\leq n} f_{(u_1,u_2)}.
\]
We can further apply Lemma~\ref{lem-almostprop} to each $f_{(u_1,u_2)}$ with respect to $x_{u_2}$ if applicable, and get a sum.
Continue this operation until Lemma~\ref{lem-almostprop} is no longer valid to every summand. Finally we write
\begin{equation}\label{hugesum}
\CT_{\x}f=\sum_{s\in T}\sum_{1\leq u_1<\cdots<u_s\leq n}
\CT_{\x}f_{(u_1,\dots,u_s)},
\end{equation}
where $T$ is a subset of $\{1,\dots,n\}$. Of course, here $f_{(u_1,\dots,u_s)}$ may
also be a sum.
We call this operation the Gessel--Xin operation to the rational function $f$, since it first appeared in Gessel and Xin's paper \cite{GX}.
Though we turn $\CT\limits_{\x}f$ into a huge sum \eqref{hugesum},
we can determine that all the summand in
\eqref{hugesum} are zeros under certain conditions.
Then we can arrive at a conclusion that
\[
\CT_{\x} f=0.
\]
This is the main procedure of the Gessel--Xin method.

\section{The roots}\label{sec-roots}

In this section, we determine all the roots of $D_n(a)$ and prove Lemma~\ref{lem-roots}.

\subsection{Preliminaries}

For a positive integer $s$, let $\mathfrak{S}_s$ be the set of all the permutations of $\{1,2,\dots,s\}$.
For positive integers $r_0,r_1,\dots,r_p$ such that $r_0+\cdots+r_p=s$,
denote $r=(r_0,\dots,r_p)$ and
\begin{equation}\label{def-Ri}
R_0=\{1,\dots,r_0\},\qquad R_i=\{r_0+\cdots+r_{i-1}+1,\dots,r_0+\cdots+r_i\}
\quad \text{for} \quad i=1,\dots,p.
\end{equation}
For $w\in \mathfrak{S}_s$,
define
\begin{equation}\label{Nwr}
N_{w,r}:=\sum_{j=1}^se_j(w,r),
\end{equation}
where
\begin{equation}\label{defi-ej}
e_j=e_j(w,r):=u_j(w)+\chi\big(\{w(j),w(j-1)\}\subseteq R_i \ \text{for some $i=1,\dots,p$}\big).
\end{equation}
Here $w(0):=0$ and
\begin{equation}\label{def-uj}
u_j(w)=\begin{cases}
1 &\text{if $w(j-1)<w(j)$,}\\
0 &\text{otherwise.}
\end{cases}
\end{equation}
For example, if $s=10$, $r=(r_0,r_1,r_2)=(3,3,4)$, and $w=(9,10,3,5,6,8,4,2,7,1)$,
then $R_0=\{1,2,3\}$, $R_1=\{4,5,6\}$, $R_2=\{7,8,9,10\}$ and $N_{w,r}=8$.
Let
\begin{equation}\label{e-weight}
0=w(0)\mathop{\longrightarrow}^{e_1}w(1)\mathop{\longrightarrow}^{e_2} \cdots\mathop{\longrightarrow}^{e_{s-1}} w(s-1)\mathop{\longrightarrow}^{e_s}w(s)
\end{equation}
be a weighted directed path on $w\in \mathfrak{S}_s$.
Then we can view $N_{w,r}$ as the sum of the weights on this directed path.
Thus, the above example can be represented graphically as follows:
\[
0\mathop{\longrightarrow}^{1}9\mathop{\longrightarrow}^{2}10\mathop{\longrightarrow}^{0}
3\mathop{\longrightarrow}^{1}5\mathop{\longrightarrow}^{2}
6\mathop{\longrightarrow}^{1}8\mathop{\longrightarrow}^{0}
4\mathop{\longrightarrow}^{0}2\mathop{\longrightarrow}^{1}
7\mathop{\longrightarrow}^{0}1.
\]
Note that $e_j$ is determined by $w(j), w(j-1)$ and $r$,
and $e_1=1$. We call $w(j-1)\mathop{\longrightarrow}\limits^{e_j}w(j)$ an ascending arc if $w(j-1)<w(j)$.

It is not hard to obtain the lower bound for $N_{w,r}$.
\begin{prop}\label{prop-min}
For a positive integer $s$, let $w\in \mathfrak{S}_s$, $r=(r_0,\dots,r_p)$ be a vector of  positive
integers such that $|r|=s$,
and $N_{w,r}$ be defined in \eqref{Nwr}.
Then, for all $w\in \mathfrak{S}_s$ and any $i\in \{1,\dots,s\}$
\begin{equation}\label{e-lowerbound}
N_{w,r}=\sum_{j=1}^se_j(w,r)\geq \max\{r_1,\dots,r_p\},
\end{equation}
and
\begin{equation}\label{lowerbound1}
\sum_{\substack{j=1\\j\neq i}}^se_j(w,r)\geq \max\{r_1,\dots,r_p\}-1.
\end{equation}
\end{prop}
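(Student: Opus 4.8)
The plan is to exhibit two \emph{disjoint} sets $A$ and $B$ of arcs of the weighted directed path \eqref{e-weight}, each arc of which carries weight $e_j\ge 1$, with $|A|+|B|=\max\{r_1,\dots,r_p\}$. Summing the $e_j$ over $A\cup B$ then gives \eqref{e-lowerbound}, and since deleting one index $i$ removes at most one arc of $A\cup B$, \eqref{lowerbound1} follows as well. Fix $k\in\{1,\dots,p\}$ with $r_k=\max\{r_1,\dots,r_p\}$ and write $R_k=\{a+1,\dots,a+r_k\}$, $a=r_0+\cdots+r_{k-1}$. The path $0=w(0),w(1),\dots,w(s)$ visits $R_k$ at positions forming $t$ maximal runs of consecutive positions $M_1,\dots,M_t$, listed in path order. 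Let $B$ consist of the arcs internal to these runs, i.e.\ the $j$ with $w(j-1),w(j)\in R_k$; a run of length $\ell$ contributes $\ell-1$ such arcs, so $|B|=r_k-t$, and every arc of $B$ has indicator term equal to $1$ in \eqref{defi-ej}, hence $e_j\ge 1$. This half is routine.

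Next I would attach to each run $M_i$ one distinguished \emph{ascending} arc $\alpha_i$, forming $A=\{\alpha_1,\dots,\alpha_t\}$. Let $\beta_i$ be the first position of $M_i$, so $w(\beta_i)\in R_k$ and $w(\beta_i-1)\notin R_k$ (with $w(0)=0$). If $w(\beta_i-1)<w(\beta_i)$, I would set $\alpha_i=\beta_i$. Otherwise $w(\beta_i-1)>w(\beta_i)\ge a+1$, and since $R_k$ is a block of \emph{consecutive} integers with $w(\beta_i-1)\notin R_k$, this forces $w(\beta_i-1)>a+r_k$; here $\beta_i\ge 2$ because arc $1$ is always ascending. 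I then walk back from position $\beta_i-1$ along the maximal run of positions whose $w$-value exceeds $a+r_k$ and let $\gamma_i$ be its first position; then $w(\gamma_i-1)\le a+r_k<w(\gamma_i)$ (the case $\gamma_i=1$ being covered by $w(0)=0$), so arc $\gamma_i$ is ascending, and I would set $\alpha_i=\gamma_i$.

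It then remains to check that $\alpha_1,\dots,\alpha_t$ are pairwise distinct and disjoint from $B$. Two runs both entered by an ascent give distinct first positions. If $M_i$ is entered by an ascent and $M_{i'}$ by a descent, then $w(\alpha_i)=w(\beta_i)\in R_k$ whereas $w(\alpha_{i'})=w(\gamma_{i'})>a+r_k$, so $\alpha_i\ne\alpha_{i'}$. If both $M_i$ (earlier in path order) and $M_{i'}$ (later) are entered by a descent, then every position between $\gamma_{i'}$ and $\beta_{i'}-1$ carries a value above $a+r_k$ and so is not a position of $M_i$ (whose values lie in $R_k$); as the last position of $M_i$ precedes $\beta_{i'}$, it must in fact precede $\gamma_{i'}$, giving $\gamma_i<\gamma_{i'}$. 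Disjointness from $B$ is clear since an arc of $B$ has both endpoints in $R_k$, while an $\alpha_i$ of the first type has $w(\alpha_i-1)\notin R_k$ and one of the second type has $w(\alpha_i)\notin R_k$. Hence $A\cap B=\emptyset$, $|A\cup B|=t+(r_k-t)=r_k$, and $e_j\ge 1$ on $A\cup B$, so $N_{w,r}=\sum_{j=1}^s e_j\ge r_k$, while for any $i$ one gets $\sum_{j\ne i}e_j\ge |(A\cup B)\setminus\{i\}|\ge r_k-1$.

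The main obstacle is the construction of $A$ for runs entered by a descent together with the distinctness verification: one must decide which arc to charge, and the ``trace back to the preceding run of high values'' works only because each $R_l$ is an interval of consecutive integers — this is exactly what forces a descending entry into $R_k$ to originate from a value larger than every element of $R_k$, and hence from inside a run whose own entry arc is ascending and not yet used. Once this bookkeeping is in place, both inequalities drop out simultaneously.
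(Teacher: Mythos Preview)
Your proof is correct and takes a genuinely different route from the paper's.

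The paper argues segment by segment: it shows that for \emph{any} two positions $i_1<i_2$ with $w(i_1),w(i_2)\in R_l$, the accumulated weight $\sum_{j=i_1+1}^{i_2}e_j$ is at least $1$, via a four-case analysis on whether the first step out of $w(i_1)$ and the last step into $w(i_2)$ are ascending or descending. Applying this to the $r_l-1$ disjoint segments between consecutive visits to $R_l$, and adding the initial segment (which contains $e_1=1$), yields $r_l$ disjoint blocks each of weight $\ge 1$. The second inequality then follows because any single index $i$ lies in at most one block.

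Your approach is more pointwise: instead of bounding block sums, you exhibit $r_k$ individual arcs with $e_j\ge 1$, namely the $r_k-t$ run-internal arcs (indicator term) together with one ascending ``entry'' arc per run (ascent term). The delicate part—tracing back to find an ascending arc when a run is entered by a descent, and verifying these traced-back arcs are pairwise distinct—is where you use the interval structure of $R_k$, just as the paper does in its case (iv). Your distinctness argument for two descent-entered runs is correct: since the last position of the earlier run $M_i$ precedes $\beta_{i'}$ and cannot lie in the high-value block $[\gamma_{i'},\beta_{i'}-1]$, it precedes $\gamma_{i'}$, and hence $\gamma_i<\beta_i\le\text{(last position of }M_i)<\gamma_{i'}$.

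What your approach buys is that inequality~\eqref{lowerbound1} becomes a one-liner: deleting one index removes at most one arc from $A\cup B$. The paper's segment argument also gives this, but less explicitly (it just says ``it is clear''). Conversely, the paper's segment bound is slightly more general—it applies to any pair of $R_l$-visits, not just consecutive ones—though that extra generality is not used elsewhere.
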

\begin{proof}
Recall the definition of  $R_i$ in \eqref{def-Ri}.
For a given $l\in \{1,2,\dots,p\}$ and any two elements $w(i_1), w(i_2)\in R_l$,
we show that the sum of weights that start from $w(i_1)$ and end at $w(i_2)$ must be no less than 1. Denote this sum by $h$.
More explicitly, either $w(i_1)\mathop{\longrightarrow}\limits^{e_{i_2}}w(i_2)$ or
\[
w(i_1)  \mathop{\longrightarrow}^{e_j}w(j)\overbrace{\longrightarrow\cdots \mathop{\longrightarrow}}^{h_1}w(k) \mathop{\longrightarrow}^{e_{i_2}}w(i_2),
\]
then $h=e_{i_2}\geq 1$ for the first case and
\begin{equation}\label{e-proplowerbound-1}
h=e_j+e_{i_2}+h_1\geq 1
\end{equation}
for the second case.
The first case holds apparently since both $w(i_1), w(i_2)\in R_l$.
We show the second case by discussing:
(i) $w(i_1)>w(j), w(k)<w(i_2)$;
(ii) $w(i_1)<w(j), w(k)>w(i_2)$;
(iii) $w(i_1)<w(j), w(k)<w(i_2)$;
(iv) $w(i_1)>w(j), w(k)>w(i_2)$.
For (i)--(iii), it is easy to see that $h\geq e_j+e_{i_2} \geq 1$ since there exists at least one ascending arc
between two consecutive elements.
The last case $w(i_1)>w(j), w(k)>w(i_2)$. 
If one of $w(j)$ and $w(k)$ belongs to $R_l$, then either $e_j=1$ or $e_{i_2}=1$ and $h\geq 1$.
Thus, we assume $w(j),w(k)\notin R_l$ below. In this case $w(j)\neq w(k)$, otherwise $w(i_1)>w(j)$ and $w(k)>w(i_2)$ can not hold both for
$w(j)=w(k)\notin R_l$ .
Since $w(j)\neq w(k)$, $w(i_1)>w(j)$, $w(k)>w(i_2)$ and $w(j),w(k)\notin R_l$, we conclude that $w(j)<w(i_1)<w(k)$.
Using the fact that $w(j)<w(k)$, there must exist at least one ascending arc between $w(j)$ and $w(k)$. Hence $h_1\geq 1$.
It follows that $h\geq 1$ by \eqref{e-proplowerbound-1}.

Let $i_1$ and $i_2$ be any two consecutive elements in $R_l$.
Using the above fact that the sum of weights that start from $w(i_1)$ and end at $w(i_2)$ must be no less than 1, together with $e_1=1$, we have
\[
N_{w,r}\geq r_l=|R_l|
\]
for any given $l\in \{1,\dots,p\}$.
Then $N_{w,r}\geq \max\{r_1,\dots,r_p\}$.

By the discuss above, it is clear that for any $i\in \{1,\dots,s\}$
\[
\sum_{\substack{j=1\\j\neq i}}^se_j(w,r)\geq \max\{r_1-1,\dots,r_p-1\}
=\max\{r_1,\dots,r_p\}-1.\qedhere
\]
\end{proof}
Note that the lower bound in \eqref{e-lowerbound} can be attained.
To determine the minimum $N_{w,r}$, the approach involves separating the elements of $R_i$ as much as possible and then sorting these elements in descending order, from largest to smallest.
To demonstrate the corresponding $w\in \mathfrak{S}_s$,
we name the elements of $R_i$ (in an increasing order) as $a_{i,1},\dots,a_{i,r_i}$ respectively.
Note that $|R_i|=r_i$.
Arrange all the $a_{i,j}$ as follows
\[
w^{0}=\big(a_{p,r_p}, a_{p-1,r_{p-1}},\dots,a_{0,r_0},
a_{p,r_{p}-1},a_{p-1,r_{p-1}-1},\dots,a_{0,r_0-1},\dots
\big),
\]
where $a_{p,r_p},\dots,a_{0,r_0}$
are the largest elements in $R_p,\dots,R_0$ respectively,
and $a_{p,r_{p}-1},\dots,a_{0,r_0-1}$
are the second largest elements in $R_p,\dots,R_0$ respectively, and so on.
It is not hard to see that $N_{w^0,r}=\max\{r_1,\dots,r_p\}$.
For example, if $s=10$, $r=(3,3,4)$, $R_0=\{1,2,3\}$, $R_1=\{4,5,6\}$ and $R_2=\{7,8,9,10\}$,
then $w^0$ in $\mathfrak{S}_{10}$ is
$(10,6,3,9,5,2,8,4,1,7)$ and $N_{w^0,r}=4=\max\{r_1,r_2\}$.

We present the key lemma, which plays a pivotal role in determining the roots of $D_n(a)$.
\begin{lem}\label{lem-key}
For a positive integer $s$, let $b,c,t$ and $k_1,\dots,k_s$ be nonnegative integers such that
$1\leq k_{i}\leq (s-1)c+b+t$ for $1\leq i\leq s$.
Then at least one of the following holds:
\begin{enumerate}
\item $1\leq k_i\leq b$ for some $i$ with $1\leq i\leq s$;
\item $-c\leq k_i-k_j\leq c-1$ for some $(i,j)$ such that $1\leq i<j\leq s$
and $\{i,j\}\nsubseteq R_l$ for all $l=1,2,\dots,p$;
\item $-c-1\leq k_{i}-k_{j}\leq c$ for some $(i,j)$ such that $1\leq i<j\leq s$
and $\{i,j\}\subseteq R_l$ for some $l\in \{1,2,\dots,p\}$;
\item there exists a permutation $w\in\mathfrak{S}_s$ and nonnegative integers $d_1,\dots,d_s$
such that
\begin{subequations}\label{e-k1-k}
\begin{equation}\label{e-k1-1-k}
k_{w(1)}=b+d_1,
\end{equation}
and
\begin{equation}\label{e-k1-2-k}
k_{w(j)}-k_{w(j-1)}=c+\chi\big(\{w(j),w(j-1)\}\subseteq R_i \ \text{for some $i=1,\dots,p$}\big)+d_j
 \quad \text{for $2\leq j\leq s$.}
\end{equation}
\end{subequations}
Here the $d_j$ satisfy
\begin{equation}\label{tt}
\max\{r_1,\dots,r_p\}\leq \sum_{j=1}^{s}\Big(\chi\big(\{w(j),w(j-1)\}\subseteq R_i \ \text{for some $i=1,\dots,p$}\big)+d_j\Big)\leq t,
\end{equation}
$w(0):=0$, and $d_j>0$ if $w(j-1)<w(j)$ for $1\leq j\leq s$.
\end{enumerate}
\end{lem}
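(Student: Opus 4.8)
The plan is to argue by contradiction: assume that none of (1), (2), (3) holds, and then construct the permutation $w$ and the nonnegative integers $d_1,\dots,d_s$ witnessing (4). The assumption that (1) fails gives $k_i\geq b+1$ for every $i$; the failure of (2) and (3) means that for every pair $i<j$ we have either $k_j-k_i\geq c+1+\varepsilon$ or $k_j-k_i\leq -c-1-\varepsilon$, where $\varepsilon=1$ if $\{i,j\}\subseteq R_l$ for some $l$ and $\varepsilon=0$ otherwise. In particular all the $k_i$ are \emph{distinct}, so there is a unique permutation $w\in\mathfrak{S}_s$ for which $k_{w(1)}<k_{w(2)}<\cdots<k_{w(s)}$. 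First I would fix this $w$ and then simply \emph{define} the $d_j$ by \eqref{e-k1-k}, i.e. $d_1:=k_{w(1)}-b\geq 0$ and $d_j:=k_{w(j)}-k_{w(j-1)}-c-\chi(\{w(j),w(j-1)\}\subseteq R_i)$ for $2\leq j\leq s$; the content of the lemma is then that these $d_j$ are nonnegative, satisfy the strict positivity condition on ascending arcs, and obey the two-sided bound \eqref{tt}.

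The nonnegativity of the $d_j$ and the condition ``$d_j>0$ when $w(j-1)<w(j)$'' is the heart of the matter, and I expect it to be the main obstacle. Since $k_{w(j)}>k_{w(j-1)}$ by construction, the pair $(w(j-1),w(j))$ in the original indexing is ordered one way or the other. If $w(j-1)<w(j)$ in the index order, then failure of (2)/(3) forces $k_{w(j)}-k_{w(j-1)}\geq c+1+\chi(\{w(j),w(j-1)\}\subseteq R_i)$, i.e. $d_j\geq 1>0$ — exactly the ascending-arc condition. If instead $w(j-1)>w(j)$ in the index order, the relevant inequality is $k_{w(j-1)}-k_{w(j)}\geq c+1+\chi$ or $\leq -c-1-\chi$; since $k_{w(j)}>k_{w(j-1)}$ only the second is possible, giving $k_{w(j)}-k_{w(j-1)}\geq c+1+\chi\geq c+\chi$, hence $d_j\geq 0$ (and $d_j\geq 1$ when $\chi=0$, but $d_j\geq 0$ suffices here). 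For $j=1$, $d_1=k_{w(1)}-b\geq 1$ by failure of (1), which is the ascending-arc condition at $j=1$ since $w(0)=0<w(1)$. This case analysis mirrors the weighted-path picture of Proposition~\ref{prop-min}: the quantity $e_j=e_j(w,r)$ defined in \eqref{defi-ej} is precisely $\chi(\{w(j),w(j-1)\}\subseteq R_i)+u_j(w)$, and one checks $d_j\geq e_j - \chi(\{w(j),w(j-1)\}\subseteq R_i) = u_j(w)$, so in particular $\chi(\cdots)+d_j\geq e_j$.

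It remains to verify the two bounds in \eqref{tt}. For the \emph{upper} bound, telescope \eqref{e-k1-2-k}: summing over $j=2,\dots,s$ and adding \eqref{e-k1-1-k} gives
\[
k_{w(s)}=b+\sum_{j=1}^s\Big(\chi\big(\{w(j),w(j-1)\}\subseteq R_i\big)+d_j\Big),
\]
so the middle quantity in \eqref{tt} equals $k_{w(s)}-b$. By hypothesis $k_{w(s)}\leq (s-1)c+b+t$, hence it is $\leq (s-1)c+t$; but from the recursion each step contributes at least $c$ (since $\chi+d_j\geq c$ is \emph{not} automatic — rather $k_{w(j)}-k_{w(j-1)}\geq c$, so the sum of the $s-1$ increments $k_{w(j)}-k_{w(j-1)}$ is at least $(s-1)c$), and subtracting this from $k_{w(s)}-k_{w(1)}\leq (s-1)c+t-d_1$ shows $\sum_{j=2}^s(\chi+d_j)=k_{w(s)}-k_{w(1)}\leq (s-1)c+t - d_1$; combining with $d_1$ added back gives the bound $\leq t$ once one uses $k_{w(1)}\geq b$ more carefully — I would write this telescoping cleanly so that the middle term of \eqref{tt} is exactly $k_{w(s)}-b\leq t$. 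For the \emph{lower} bound, the middle quantity is $\geq\sum_{j=1}^s e_j(w,r)=N_{w,r}$, which is $\geq\max\{r_1,\dots,r_p\}$ by \eqref{e-lowerbound} of Proposition~\ref{prop-min}. This closes the argument: under the negation of (1)–(3) we have produced data satisfying (4), so at least one of (1)–(4) always holds.
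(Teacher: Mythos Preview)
Your approach is correct and is essentially the paper's proof stripped of its tournament language: the paper builds a weighted tournament on $\{1,\dots,s\}$ and extracts $w$ from its unique Hamilton path, but since failure of (2)/(3) orients every edge toward the index with the larger $k$-value, that Hamilton path is exactly your sorting permutation $k_{w(1)}<\cdots<k_{w(s)}$. Two minor slips to fix. First, in the non-ascending case $w(j-1)>w(j)$ the failure of (2)/(3) (applied to the ordered pair $(w(j),w(j-1))$) gives only $k_{w(j)}-k_{w(j-1)}\geq c+\chi$, not $c+1+\chi$; so your parenthetical ``$d_j\geq 1$ when $\chi=0$'' is false, but $d_j\geq 0$ is what you need and do get. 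Second, your telescoped identity should read $k_{w(s)}=b+(s-1)c+\sum_{j=1}^s(\chi_j+d_j)$ (with $\chi_1=0$), not $k_{w(s)}=b+\sum(\chi_j+d_j)$; you dropped the $(s-1)c$, which is why your upper-bound paragraph wanders before arriving at the right conclusion $\sum_j(\chi_j+d_j)=k_{w(s)}-b-(s-1)c\leq t$. With these corrections your argument goes through exactly as written, the lower bound in \eqref{tt} coming from $d_j\geq u_j(w)$ and Proposition~\ref{prop-min} just as you say.
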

Note that the $p=0$ case reduces to \cite[Lemma 8.1]{Zhou23},
the $p=1$ case reduces to \cite[Lemma 6.4]{XZ2023}.
Also notice that $d_1$ is always a positive integer, and if $t=0$ then \eqref{tt} can not hold. Hence, (4) can not occur if $t=0$.
\begin{proof}
We prove the lemma by showing that if (1)--(3) fail then (4) must hold.

Assume that (1)--(3) are all false.
Then we construct a weighted tournament $T$ on a complete graph
on $s$ vertices, labelled $1,\dots,s$, as follows.
For the edge $(i,j)$ with $1\leq i<j\leq s$ and $\{i,j\}\nsubseteq R_l$ for any $l\in \{1,2,\dots,p\}$,
we draw an arrow from $j$ to $i$ and attach a weight $c$ if $k_i-k_j\ge c$.
If, on the other hand, $k_i-k_j\le -c-1$ then we draw an arrow
from $i$ to $j$ and attach the weight $c+1$.
Similarly, for the edge $(i,j)$ with $1\leq i<j\leq s$
and $\{i,j\}\subseteq R_l$ for some $l\in \{1,2,\dots,p\}$,
we draw an arrow from $j$ to $i$ and attach a weight $c+1$ if $k_i-k_j\ge c+1$,
and we draw an arrow from $i$ to $j$ and attach the weight $c+2$
if $k_i-k_j\le -c-2$.
Note that the weight of each edge of a tournament is nonnegative since $c$ is a nonnegative integer.

We call a directed edge from $i$ to $j$ ascending if $i<j$.
It is immediate from our construction that
(i) the weight of the edge $i\to j$ is less than or equal $k_j-k_i$, and
(ii) the weight of an ascending edge is positive.

We will use (i) and (ii) to show that any of the above-constructed
tournaments is acyclic and hence transitive.
As a consequence of (i), the weight of a directed path from $i$ to $j$ in $T$,
defined as the sum of the weights of its edges, is at most $k_j-k_i$.
Proceeding by contradiction, assume that $T$ contains a cycle $C$.
By the above, the weight of $C$ must be non-positive, and hence $0$.
Since $C$ must have at least one ascending edge, which by (ii) has a positive
weight, the weight of $C$ is positive, a contradiction.

Since each $T$ is transitive, there is exactly one directed Hamilton path $P$ in $T$,
corresponding to a total order of the vertices.
Assume $P$ is given by
\[
P=w(1)\rightarrow w(2)\rightarrow\cdots\rightarrow w(s-1)\rightarrow w(s),
\]
where we have suppressed the edge weights.
Then
\[
k_{w(s)}-k_{w(1)}\ge (s-1)c+\sum_{j=2}^s \chi\big(\{w(j),w(j-1)\}\subseteq R_i \ \text{for some $i=1,\dots,p$}\big),
\]
and thus
\begin{align}\label{e-contradiction}
k_{w(s)}&\ge k_{w(1)}+(s-1)c+\sum_{j=2}^s \chi\big(\{w(j),w(j-1)\}\subseteq R_i \ \text{for some $i=1,\dots,p$}\big)\nonumber\\
    & \ge b+1+(s-1)c+\sum_{j=2}^s \chi\big(\{w(j),w(j-1)\}\subseteq R_i \ \text{for some $i=1,\dots,p$}\big).
\end{align}
Together with the assumption that $k_{w(s)}\leq (s-1)c+b+t$
this implies that $P$ has at most $t-1$ ascending edges.
Let $d_1,\dots,d_s$ be nonnegative integers such that \eqref{e-k1-k} holds.
Since (1) does not hold and by \eqref{e-k1-1-k}, $k_{w(1)}=b+d_1\geq b+1$, so that $d_1\geq 1$.
For $2\leq j\leq s$, if $w(j-1)\to w(j)$ is an ascending edge, then $d_j$ is a positive integer.
That is, for $2\leq j\leq s$ if $w(j-1)<w(j)$ then $d_j\geq 1$.
Set $k_0:=0$.
Since
\begin{multline*}
\sum_{j=1}^s (k_{w(j)}-k_{w(j-1)})=k_{w(s)}
=b+(s-1)c+\sum_{j=1}^s \big(d_j+\chi\big(\{w(j),w(j-1)\}\subseteq R_i \ \text{for some $i=1,\dots,p$}\big) \\
\leq b+(s-1)c+t,
\end{multline*}
we have
\[
\sum_{j=1}^s \big(d_j+\chi\big(\{w(j),w(j-1)\}\subseteq R_i \ \text{for some $i=1,\dots,p$}\big)\big)\leq t.
\]
Since $d_j\geq u_j$ by the definition of the $u_j$ in \eqref{def-uj},
\[
\sum_{j=1}^s \big(d_j+\chi\big(\{w(j),w(j-1)\}\subseteq R_i \ \text{for some $i=1,\dots,p$}\big)\big)
\geq N_{w,r},
\]
where $N_{w,r}$ is defined in \eqref{Nwr}.
Together with Proposition~\ref{prop-min} yields
\[
\sum_{j=1}^s \big(d_j+\chi\big(\{w(j),w(j-1)\}\subseteq R_i \ \text{for some $i=1,\dots,p$}\big)\big)
\geq \max\{r_1,\dots,r_p\}.
\]
This completes the proof of the assertion that (4) must hold if (1)--(3) fail.
\end{proof}

By Lemma~\ref{lem-key}, we have an essential result to determine the roots
of $D_n(a)$.
\begin{cor}\label{cor-3}
For a positive integer $s$, let $b,c,k_1,\dots,k_s$
be nonnegative integers and $r_0,\dots,r_p,n_0,\dots,n_p$ be positive integers such that
$1\leq k_{i}\leq (s-1)c+b+t_s$ for $1\leq i\leq s\leq n$,
$n_0+\cdots+n_p=n$, $r_0+\cdots+r_p=s$, and
$r_i\leq \min \{s,n_i\}$ for $i=0,\dots,p$.
Take
\begin{equation}\label{def-t}
t_{s}=\begin{cases}
0  &\text{if $1\leq s\leq n-\sum_{i=1}^p n_i+p$},\\
\big\lfloor\frac{s-n_0-1}{p} \big\rfloor  &\text{if $n-\sum_{i=1}^p n_i+p+1\leq s
\leq n-\sum_{i=2}^pn_{w_i}+(p-1)n_{w_1}$},\\
\big\lfloor\frac{s-n_0-n_{w_1}-1}{p-1} \big\rfloor 
&\text{if $n-\sum_{i=2}^pn_{w_i}+(p-1)n_{w_1}+1\leq s
\leq n-\sum_{i=3}^pn_{w_i}+(p-2)n_{w_2}$},\\
 \vdots\\
s-n_0-n_{w_1}-\cdots-n_{w_{p-1}}-1  &\text{if
$n-n_{w_p}+n_{w_{p-1}}+1\leq s
\leq n$,}
\end{cases}
\end{equation}
where $(n_{w_1},\dots,n_{w_p})$ is a permutation of $(n_1,\dots,n_p)$ in an increasing order.
In particular, if $p=0$, then $t_s=0$ for $1\leq s\leq n$; if $p=1$, then $t_s=0$ for $1\leq s\leq n_0+1$ and $t_s=s-n_0-1$ for $n_0+2\leq s\leq n$.
Denote by $R_0=\{1,\dots,r_0\}$, $R_i=\{r_0+\cdots+r_{i-1}+1,\dots,r_0+\cdots+r_i\}$
for $i=1,\dots,p$.
Then, at least one of the following cases holds:
\begin{itemize}
\item[(i)] $1\leq k_i\leq b$ for some $i$ with $1\leq i\leq s$;
\item[(ii)] $-c\leq k_i-k_j\leq c-1$ for some $(i,j)$ such that $1\leq i<j\leq s$ and
$\{i,j\}\nsubseteq R_l$ for all $l=1,\dots,p$;
\item[(iii)] $-c-1\leq k_i-k_j\leq c$ for some $(i,j)$ such that $1\leq i<j\leq s$
and $\{i,j\}\subseteq R_l$ for some $l\in \{1,\dots,p\}$.
\end{itemize}
\end{cor}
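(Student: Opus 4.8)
The plan is to deduce Corollary~\ref{cor-3} from Lemma~\ref{lem-key} by a direct reduction: I would show that the hypotheses of the corollary force the parameter $t$ in Lemma~\ref{lem-key} to be exactly $t_s$, and that conclusion (4) of Lemma~\ref{lem-key} is incompatible with the constraint $r_i\le\min\{s,n_i\}$. First I would apply Lemma~\ref{lem-key} with $s$, the $k_i$, the composition $r=(r_0,\dots,r_p)$, and $t:=t_s$; since $1\le k_i\le (s-1)c+b+t_s$ this is legitimate. Then either one of (1)--(3) of Lemma~\ref{lem-key} holds, which is precisely one of (i)--(iii) of the corollary and we are done, or conclusion (4) holds. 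So the real work is to rule out (4).

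The key step is the following counting argument against (4). Conclusion (4) produces a permutation $w\in\mathfrak{S}_s$ and nonnegative integers $d_1,\dots,d_s$ with $d_j>0$ whenever $w(j-1)<w(j)$, satisfying \eqref{tt}, i.e.
\[
\max\{r_1,\dots,r_p\}\le \sum_{j=1}^s\Big(\chi\big(\{w(j),w(j-1)\}\subseteq R_i\big)+d_j\Big)\le t_s .
\]
I would combine this with the structure of $t_s$ defined in \eqref{def-t}. The point is that $t_s$ is constructed precisely so that $t_s<\max\{r_1,\dots,r_p\}$ under the constraint $r_i\le\min\{s,n_i\}$ — in effect $t_s$ is (one less than) the minimal possible value of $N_{w,r}=\sum_j e_j(w,r)$ that Proposition~\ref{prop-min} gives, recorded as a function of $s$ and the $n_i$. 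More precisely, I would argue: given $r_i\le\min\{s,n_i\}$ and $r_0+\cdots+r_p=s$, the quantity $\max\{r_1,\dots,r_p\}$ is bounded below in terms of $s$, $n_0,\dots,n_p$ — when $s$ is large enough that the $r_i$ cannot all be small, one of them must be large. The piecewise formula for $t_s$ in \eqref{def-t} is exactly the threshold at which $\max\{r_1,\dots,r_p\}$ is forced to exceed $t_s$; a short case analysis following the ranges of $s$ listed in \eqref{def-t} (using $\lfloor (s-n_0-1)/p\rfloor$, then $\lfloor (s-n_0-n_{w_1}-1)/(p-1)\rfloor$, and so on, with $(n_{w_1},\dots,n_{w_p})$ sorted increasingly) shows $\max\{r_1,\dots,r_p\}>t_s$ in every case, contradicting \eqref{tt}. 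Hence (4) is impossible and one of (i)--(iii) must hold.

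I expect the main obstacle to be the bookkeeping in this last case analysis: one must verify, range by range in \eqref{def-t}, that no choice of $(r_0,\dots,r_p)$ with $r_i\le\min\{s,n_i\}$ and $\sum r_i=s$ can make all of $r_1,\dots,r_p$ at most $t_s$. The intuition is a pigeonhole/averaging bound — if $s-r_0$ units are distributed among $r_1,\dots,r_p$ with each $r_i\le n_i$, the largest one is at least $\lceil (s-r_0)/p\rceil$ (and better once some $n_i$ saturate), and the thresholds in \eqref{def-t} are chosen to match; but translating this cleanly through the nested floor functions and the sorted $n_{w_i}$, while keeping track of which $r_i$ may have hit their caps $n_i$, is where care is needed. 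Everything else — invoking Lemma~\ref{lem-key}, matching conditions (1)--(3) to (i)--(iii) — is immediate.
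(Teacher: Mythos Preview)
Your proposal is correct and follows essentially the same route as the paper: apply Lemma~\ref{lem-key} with $t=t_s$, identify (1)--(3) with (i)--(iii), and rule out (4) by showing $\max\{r_1,\dots,r_p\}>t_s$ via a pigeonhole/balls-in-boxes count over the ranges in \eqref{def-t}. The only cosmetic difference is that the paper dispatches the $t_s=0$ case separately (using $d_1\ge1$ to kill \eqref{tt} directly) before running the balls-in-boxes bound for $t_s>0$, whereas you fold it into the same case analysis; both work under the stated hypothesis that the $r_i$ are positive.
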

For the $t_s>0$ cases in \eqref{def-t}, we can summarize the formulas
for $t_s$ according to the rows $j=1,\dots,p$:
\begin{equation}\label{eq-tt}
t_s=\Big\lfloor\frac{s-n_0-\sum_{i=1}^{j-1}n_{w_i}-1}{p-j+1} \Big\rfloor
\end{equation}
for $n-\sum_{i=j}^{p}n_{w_i}+(p-j+1)n_{w_{j-1}}+1\leq s\leq
n-\sum_{i=j+1}^{p}n_{w_i}+(p-j)n_{w_j}$.
Here we take $n_{w_0}=1$.
Note that there are $p+1$ rows in \eqref{def-t}. We identify the first row as the row 0.
The expression for $(s-1)c+b+t_s$ in the lemma is actually the upper bound of $R_i^j$ in \eqref{def-Rj} with $i=s-1$.
\begin{proof}
Compare with Lemma~\ref{lem-key},
it suffices to prove that Case (4) of Lemma~\ref{lem-key}
can not occur under the conditions of this corollary.
We complete this by showing that \eqref{tt} can not hold with $t=t_s$.
For the $t_s=0$ case, \eqref{tt} can not hold since $d_1\geq 1$.
For the $t_s>0$ cases, we show that
\begin{equation}\label{e-inequality}
\max\{r_1,\dots,r_p\}>t_s.
\end{equation}
Then \eqref{tt} can not hold.
We achieve this by showing that for a fixed integer $j\in \{1,\dots,p\}$,
if $s\geq n-\sum_{i=j}^{p}n_{w_i}+(p-j+1)n_{w_{j-1}}+1$
then \eqref{e-inequality} holds.

To warm up, we first show the $j=1$ case.
For $j=1$, since $s\geq n-\sum_{i=1}^pn_i+p+1=n_0+p+1$ we can assume that $s=n_0+p+l$
for a positive integer $l$. Hence,
\[
t_s=\Big\lfloor \frac{s-n_0-1}{p}\Big\rfloor
=\Big\lfloor \frac{p+l-1}{p}\Big\rfloor=1+\Big\lfloor \frac{l-1}{p}\Big\rfloor.
\]
To obtain the lower bound of $\max\{r_1,\dots,r_p\}$,
we can imagine that there are $s$ balls, and place these balls into the labelled
boxes $0,1,\dots,p$. The box zero can contain $n_0$ balls at most, the box 1 can contain $n_1$ balls at most,
the box 2 can contain $n_2$ balls at most, and so on.
The number $r_i$ is the number of the balls filled in the box $i$ for $i=1,\dots,p$.
If $s=n_0+p+l$, then
\[
\max\{r_1,\dots,r_p\}\geq \Big\lceil\frac{p+l}{p}\Big\rceil
=1+\Big\lceil\frac{l}{p}\Big\rceil>t_s=1+\Big\lfloor \frac{l-1}{p}\Big\rfloor.
\]
To achieve the aforementioned lower bound of $\max\{r_1, \dots, r_p\}$, we first place $n_0$ balls into box zero, ensuring that it is completely filled. Subsequently, we distribute the remaining $p + l$ balls evenly among the boxes 1 through $p$.

In general, for a given $j\in \{1,\dots,p\}$,
since $s\geq n-\sum_{i=j}^{p}n_{w_i}+(p-j+1)n_{w_{j-1}}+1$
we write
\begin{multline*}
s=n-\sum_{i=j}^{p}n_{w_i}+(p-j+1)n_{w_{j-1}}+l
=n_0+\sum_{i=1}^{j-2}n_{w_i}+(p-j+2)n_{w_{j-1}}+l\\
=n_0+pn_{w_1}+(p-1)(n_{w_2}-n_{w_1})+\cdots+(p-j+2)(n_{w_{j-1}}-n_{w_{j-2}})+l.
\end{multline*}
for a positive integer $l$.
In this case,
\begin{equation}\label{eq-maxr}
\max\{r_1,\dots,r_p\}\geq n_{w_{j-1}}+\Big\lceil\frac{l}{p-j+1}\Big\rceil.
\end{equation}
To attain the lower bound of $\max\{r_1,\dots,r_p\}$ for a general $j$ in \eqref{eq-maxr},
we begin by placing $n_0$ balls into the box zero, ensuring that it is completely filled.
Next, we place $pn_{w_1}$ balls among the boxes 1 through $p$, with each box receiving $n_{w_1}$ balls.
As a result, the box $w_1$ becomes completely filled.
Following this, we place $(p-1)(n_{w_2}-n_{w_1})$ balls evenly among the boxes $w_2$ through $w_p$,
ensuring that the box $w_2$ is also completely filled. We continue this process until the box $w_{j-1}$ is completely filled.
Finally, we distribute the remaining $l$ balls evenly among the boxes from $w_j$ to $w_p$.

Substitute $s=n-\sum_{i=j}^{p}n_{w_i}+(p-j+1)n_{w_{j-1}}+l$ into the expression for $t$ in \eqref{eq-tt} yields
$t_s=n_{w_{j-1}}+\big\lfloor \frac{l-1}{p-j+1}\big\rfloor$,
which is less than the lower bound of $\max\{r_1,\dots,r_p\}$
in \eqref{eq-maxr}.
\end{proof}

From the proof of Corollary~\ref{cor-3}, we find that for a given $j\in \{1,\dots,p\}$ and $n-\sum_{i=j}^{p}n_{w_i}+(p-j+1)n_{w_{j-1}}+1\leq s\leq
n-\sum_{i=j+1}^{p}n_{w_i}+(p-j)n_{w_j}$,
\[
\max\{r_1,\dots,r_p\}\geq n_{w_{j-1}}+\Big\lceil\frac{l}{p-j+1}\Big\rceil,
\]
where $l$ is a positive integer such that $l=s-n-\sum_{i=j}^{p}n_{w_i}+(p-j+1)n_{w_{j-1}}$.
While
\[
t_{s+1}=n_{w_{j-1}}+\Big\lfloor\frac{l}{p-j+1}\Big\rfloor.
\]
Therefore, we have
\[
\max\{r_1,\dots,r_p\}\geq t_{s+1}.
\]
The equality holds only when $l=(p-j+1)k$ for some $k\in \{1,\dots,n_{w_j}-n_{w_{j-1}}\}$. In this case, $\max\{r_1,\dots,r_p\}$ attains
its lower bound only when $r_0=n_0$, $r_{w_1}=n_{w_1}$,\dots,$r_{w_{j-1}}=n_{w_{j-1}}$,$r_{w_j}=\cdots=r_{w_p}=n_{w_{j-1}}+k$.
We conclude this result by the next corollary.
\begin{cor}\label{cor-rts}
Assume all the conditions of Corollary~\ref{cor-3},
\begin{equation}
\max\{r_1,\dots,r_p\}\geq t_{s+1},
\end{equation}
the equality holds only when $r_0=n_0$, $r_{w_1}=n_{w_1}$,\dots,$r_{w_{j-1}}=n_{w_{j-1}}$,$r_{w_j}=\cdots=r_{w_p}=n_{w_{j-1}}+k$.
Here $k$ is stated as above, and $w$ is any permutation of $\{1,\dots,p\}$ such that $(n_{w_1},\dots,n_{w_p})$ is increasing.
\end{cor}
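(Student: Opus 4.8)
The plan is to obtain Corollary~\ref{cor-rts} as a short consequence of the ball-counting estimate established inside the proof of Corollary~\ref{cor-3}, and then to isolate the equality case by a single saturated inequality.

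First I would fix $j\in\{1,\dots,p\}$ and assume $s$ lies in row $j$ of \eqref{def-t}, so that, with $l:=s-\big(n-\sum_{i=j}^{p}n_{w_i}+(p-j+1)n_{w_{j-1}}\big)$, one has $1\leq l\leq (p-j+1)(n_{w_j}-n_{w_{j-1}})$. The proof of Corollary~\ref{cor-3} gives the ball-counting bound \eqref{eq-maxr}, $\max\{r_1,\dots,r_p\}\geq n_{w_{j-1}}+\big\lceil l/(p-j+1)\big\rceil$, while substituting $s+1$ into \eqref{eq-tt} (and checking that the two possible formulae for $t_{s+1}$ agree at the junction $l=(p-j+1)(n_{w_j}-n_{w_{j-1}})$ between consecutive rows of \eqref{def-t}) yields $t_{s+1}=n_{w_{j-1}}+\big\lfloor l/(p-j+1)\big\rfloor$. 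Since $\lceil x\rceil\geq\lfloor x\rfloor$ for every real $x$, the inequality $\max\{r_1,\dots,r_p\}\geq t_{s+1}$ follows at once; for $s$ in row $0$ the claim is the trivial $\max\{r_1,\dots,r_p\}\geq 1\geq t_{s+1}$.

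For the equality case, $\max\{r_1,\dots,r_p\}=t_{s+1}$ forces $\lceil l/(p-j+1)\rceil=\lfloor l/(p-j+1)\rfloor$, i.e.\ $(p-j+1)\mid l$; writing $l=(p-j+1)k$, the bounds on $l$ give $1\leq k\leq n_{w_j}-n_{w_{j-1}}$, hence $M:=t_{s+1}=n_{w_{j-1}}+k\leq n_{w_j}$. If $r$ is any admissible tuple with $\max\{r_1,\dots,r_p\}=M$, then $r_0\leq n_0$, $r_{w_i}\leq n_{w_i}$ for $1\leq i\leq j-1$ (since $n_{w_i}\leq n_{w_{j-1}}\leq M$), and $r_{w_i}\leq M$ for $j\leq i\leq p$. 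Summing these $p+1$ bounds gives $s=r_0+\sum_{i=1}^{p}r_{w_i}\leq n_0+\sum_{i=1}^{j-1}n_{w_i}+(p-j+1)M$, whose right-hand side equals $n_0+\sum_{i=1}^{j-1}n_{w_i}+(p-j+1)n_{w_{j-1}}+l=s$ by the definition of $l$. Hence every one of the $p+1$ bounds is an equality, which is precisely the asserted configuration $r_0=n_0$, $r_{w_1}=n_{w_1},\dots,r_{w_{j-1}}=n_{w_{j-1}}$, $r_{w_j}=\dots=r_{w_p}=n_{w_{j-1}}+k$. Conversely this tuple is admissible (its entries are positive, it satisfies $r_{w_i}\leq n_{w_i}$ using $n_{w_{j-1}}+k\leq n_{w_j}$ for $i\geq j$, and it sums to $s$), so the minimum $t_{s+1}$ is indeed attained there.

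I do not anticipate a genuine difficulty: the whole argument is Corollary~\ref{cor-3} together with an elementary floor/ceiling comparison and a saturation argument. The only care needed is bookkeeping — confirming that $l\in[1,(p-j+1)(n_{w_j}-n_{w_{j-1}})]$ over row $j$'s $s$-range, that $t_{s+1}$ is read off consistently at the boundaries between the rows of \eqref{def-t}, and, when the $n_i$ repeat, noting that the description of the extremal $r$ is independent of the chosen increasing rearrangement $w$, since $k\geq 1$ forces $n_{w_{j-1}}<n_{w_j}$, so the partition of $\{1,\dots,p\}$ into $\{w_1,\dots,w_{j-1}\}$ and $\{w_j,\dots,w_p\}$ is canonical.
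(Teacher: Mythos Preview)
Your proposal is correct and follows essentially the same route as the paper: both extract the inequality $\max\{r_1,\dots,r_p\}\geq n_{w_{j-1}}+\lceil l/(p-j+1)\rceil$ from the ball-counting argument in the proof of Corollary~\ref{cor-3}, compute $t_{s+1}=n_{w_{j-1}}+\lfloor l/(p-j+1)\rfloor$, and compare floor with ceiling. Your saturation argument for the equality case is in fact more explicit than what the paper gives---the paper simply asserts the extremal configuration by appeal to the ball-placing heuristic, whereas you sum the $p+1$ individual bounds and observe that the total equals~$s$, forcing each to be tight.
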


\subsection{The vanishing properties}

In this subsection, we show that the rational function defined in \eqref{def-Q} below
has nice vanishing properties.

Let $b,c,d$ be nonnegative integers and $\varepsilon_{ij}$ be defined in \eqref{def-Tij}.
Denote
\begin{equation}\label{def-Q}
Q(d):=
\prod_{j=1}^{n}\frac{\big(qx_{j}/x_{0}\big)_b}
{\big(q^{-d}x_{0}/x_{j}\big)_{d}}
\prod_{1\leq i<j\leq n}
\big(x_{i}/x_{j}\big)_{c+\varepsilon_{ij}}\big(qx_{j}/x_{i}\big)_{c+\varepsilon_{ij}}.
\end{equation}
Since $D_n(a)$ is a polynomial in $q^a$ by Corollary~\ref{cor-poly},
we have
\begin{equation*}\label{e-QD}
\CT_{\x} Q(d)=D_n(-d)
\end{equation*}
for any integer $d$. Hence, we discuss the roots of $\CT\limits_x Q(d)$ instead of $D_n(a)$.
For any rational function $F$ of $x_0, x_1, \dots, x_n$ and $s$ an integer such that
$1\leq s\leq n$, and for sequences of integers $k = (k_1, k_2, \dots, k_s)$ and
$u = (u_1, u_2, \dots, u_s)$ let $E_{u,k}F$ be the result of
replacing $x_{u_i}$ in $F$ with $x_{u_s}q^{k_s-k_i}$ for
$i = 0,1,\dots, s-1$, where we set $u_0 = k_0 = 0$.
Then for $0 < u_1 < u_2 <\dots < u_s \leq n$ and $1\leq k_i\leq d$, we define
\begin{equation}\label{eq-Qrk}
Q(d\Mid u;k)
:=E_{u,k}\bigg(Q(d)\prod_{i=1}^{s}\Big(1-\frac{x_{0}}{x_{u_{i}}q^{k_{i}}}\Big)\bigg),
\end{equation}
and $Q(d\Mid u;k):=Q(d)$ for $s=0$.
Note that the product in \eqref{eq-Qrk} cancels all the factors in the denominator of $Q(d)$ that would be
taken to zero by $E_{u,k}$.
Applying the Gessel--Xin operation (mentioned in Section~\ref{sec-GX}) to $Q(d)$, we have
\begin{equation}\label{Q3-sum}
\CT_{\x}Q(d)=\sum_{s\in T}\sum_{\substack{1\leq u_1<\cdots<u_s\leq n\\1\leq k_1,\dots,k_s\leq d}}
\CT_{\x}Q(d\Mid u;k),
\end{equation}
where $T$ is a subset of $\{1,\dots,n\}$.

Let $U:=\{u_1,\dots,u_s\}$.
Then $Q(d\Mid u;k)$ can be written as
\begin{equation}\label{defi-Q}
Q(d\Mid u;k)=V\times H \times
\prod_{i=1}^s(q^{k_i-d})^{-1}_{d-k_i}(q)^{-1}_{k_i-1}
\prod_{\substack{1\leq i<j\leq n\\i,j\notin U}}\big(x_i/x_j\big)_{c+\varepsilon_{ij}}
(qx_{j}/x_{i}\big)_{c+\varepsilon_{ij}},
\end{equation}
where
\begin{equation}\label{def-V}
V=\prod_{i=1}^s(q^{1-k_i})_b
\prod_{1\leq i<j\leq s}(q^{k_j-k_i})_{c+\varepsilon_{u_iu_j}}(q^{k_i-k_j+1})_{c+\varepsilon_{u_iu_j}},
\end{equation}
and
\begin{equation}\label{eq-H}
H=\prod_{\substack{i=1\\ i\notin U}}^n
\frac{(q^{1-k_s}x_i/x_{u_s})_{b}}{\big(q^{k_s-d}x_{u_s}/x_i\big)_d}
\prod_{\substack{i=1\\ i\notin U}}^n\prod_{j=1}^s
\big(q^{k_j-k_s+\chi(i>u_j)}x_i/x_{u_s}\big)_{c+\varepsilon_{iu_j}}
\big(q^{k_s-k_j+\chi(u_j>i)}x_{u_s}/x_i\big)_{c+\varepsilon_{iu_j}}.
\end{equation}

Recall that $N_0=\{1,\dots,n_0\}$, $N_1=\{n_0+1,\dots,n_0+n_1\},\dots,
N_p=\{n_0+\cdots+n_{p-1}+1,\dots,n\}$ for positive integers $n_0,n_1,\dots,n_p$ such that $n_0+\cdots+n_p=n$. Define
\begin{equation}\label{r}
r_i:=|U\cap N_i|,
\end{equation}
for $i=0,\dots,p$.
The following properties of $Q(d\Mid u;k)$ are crucial in determining the roots of $D_n(a)$.
\begin{lem}\label{lem-Q}
Let $Q(d\Mid u;k)$, $r_i$ and $t_s$ be defined in \eqref{eq-Qrk}, \eqref{r}
and \eqref{def-t} respectively.
Then $Q(d\Mid u;k)$ has the following properties:
\begin{enumerate}
\item If $d\leq (s-1)c+b+t_s$, then $Q(d\Mid u;k)=0$;

\item If $d>sc+\sum_{i=1}^pr_i(n_i-r_i)/(n-s)$ and $s\neq n$, then
\begin{equation}\label{Q1}
\CT_{x_{u_s}}Q(d\Mid u;k)=
\begin{cases}\displaystyle
\sum_{\substack{u_s<u_{s+1}\leq n\\1\leq k_{s+1}\leq d}}
Q(d\Mid u_1,\dots,u_s,u_{s+1};k_1,\dots,k_s,k_{s+1}) \quad &\text{for $u_s<n$,}\\
0 \quad &\text{for $u_s=n$;}
\end{cases}
\end{equation}

\item If $sc+t_{s+1}+1\leq d\leq sc+\sum_{i=1}^pr_i(n_i-r_i)/(n-s)$ and $s\neq n$, then
\[
\CT_{\x} Q(d\Mid u;k)=0.
\]
\end{enumerate}
\end{lem}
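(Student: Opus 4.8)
The plan is to treat the three parts in turn, as they rest on rather different ideas.

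\emph{Part (1).} If $d\le(s-1)c+b+t_s$, then under the standing hypotheses $1\le k_i\le d\le(s-1)c+b+t_s$ for every $i$, while $r_i=|U\cap N_i|\le\min\{s,n_i\}$ by definition, so Corollary~\ref{cor-3} applies and at least one of its alternatives (i), (ii), (iii) must hold. Since $u_1<\cdots<u_s$ and the blocks $N_0<N_1<\cdots<N_p$ are consecutive intervals of integers, the indices $i$ with $u_i\in N_l$ form exactly the interval $R_l=\{r_0+\cdots+r_{l-1}+1,\dots,r_0+\cdots+r_l\}$ appearing in Corollary~\ref{cor-3}; hence $\varepsilon_{u_iu_j}=\chi(\{i,j\}\subseteq R_l\ \text{for some }l\in\{1,\dots,p\})$, and the relevant vanishing can be read straight off the factor $V$ in \eqref{def-V}: alternative (i) kills some $(q^{1-k_i})_b$, alternative (ii) kills some $(q^{k_j-k_i})_c$ or $(q^{k_i-k_j+1})_c$, and alternative (iii) kills some $(q^{k_j-k_i})_{c+1}$ or $(q^{k_i-k_j+1})_{c+1}$. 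In all cases $V=0$, so $Q(d\Mid u;k)=0$ by \eqref{defi-Q}.

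\emph{Part (2).} Here I would view $Q(d\Mid u;k)$ as a rational function of $x_{u_s}$ only: in \eqref{defi-Q} just the factor $H$ depends on $x_{u_s}$, and a direct count using \eqref{eq-H} gives
\[
\deg_{x_{u_s}}Q(d\Mid u;k)=\deg_{x_{u_s}}H=-(n-s)d+s(n-s)c+\sum_{l=1}^p r_l(n_l-r_l),
\]
where the last sum counts the pairs $(i,u_j)$ with $i\notin U$, $u_j\in U$ lying in a common block $N_l$, $l\ge1$. The only $x_{u_s}$-denominator is $\prod_{i\notin U}(q^{k_s-d}x_{u_s}/x_i)_d$, contributing $(n-s)d$ simple factors $1-c_rx_{u_s}/x_{i_r}$ with the $c_r$ distinct for equal $x_{i_r}$ (the exponents $k_s-d,\dots,k_s-1$ being distinct); clearing it presents $Q(d\Mid u;k)$ as a Laurent polynomial of top $x_{u_s}$-degree $s(n-s)c+\sum_l r_l(n_l-r_l)$ divided by that denominator. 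The hypothesis $d>sc+\sum_l r_l(n_l-r_l)/(n-s)$ is exactly the assertion that this top degree is at most $(n-s)d-1$, so Lemma~\ref{lem-almostprop} applies and computes $\CT_{x_{u_s}}Q(d\Mid u;k)$ as the sum of residues at the poles $x_{u_s}=q^{d-k_s-m}x_i$ with $i\notin U$, $i>u_s$; for $u_s=n$ there are no such poles and the constant term is $0$. For $u_s<n$, reindexing those poles by $u_{s+1}:=i\in\{u_s+1,\dots,n\}$ and $k_{s+1}:=d-m\in\{1,\dots,d\}$, and unwinding the definition of $E_{u,k}$ and of $Q(d\Mid u;k)$ in \eqref{eq-Qrk}, identifies each residue with $Q(d\Mid u_1,\dots,u_s,u_{s+1};k_1,\dots,k_s,k_{s+1})$, which is \eqref{Q1}.

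\emph{Part (3) is the crux, and the main obstacle.} The degree count of part (2) now only shows that, after clearing the denominator, the numerator in $x_{u_s}$ has top degree $s(n-s)c+\sum_l r_l(n_l-r_l)$, and the upper bound $d\le sc+\sum_l r_l(n_l-r_l)/(n-s)$ makes this at least $(n-s)d$, so Lemma~\ref{lem-almostprop} is no longer available; this is precisely where the non-symmetry of the $x_i$ obstructs a uniform residue computation. Building on the two-component treatment of \cite{XZ2023}, the plan is: first, use the positive-degree extension of the Gessel--Xin operation so that $\CT_{x_{u_s}}Q(d\Mid u;k)$ splits into the residue terms $Q(d\Mid u_1,\dots,u_{s+1};\dots)$ of part (2) plus finitely many further terms produced by the high-degree part of the numerator; second, after taking the remaining constant terms, rewrite each such further term as a scalar multiple of the constant term of the Baker--Forrester Laurent polynomial $\prod_{i<j,\;i,j\notin U}(x_i/x_j)_{c+\varepsilon_{ij}}(qx_j/x_i)_{c+\varepsilon_{ij}}$ in the variables $\{x_i:i\notin U\}$ against an explicit monomial, whose exponent vector is forced---by the shape of $H$ together with the inequality $d\ge sc+t_{s+1}+1$ and Corollary~\ref{cor-rts}---to fall under the hypotheses of Lemma~\ref{lem-vanishing}, hence to vanish; third, conclude that the residue terms also contribute $0$, by parts (1) and (2) applied one level up, so that $\CT_{\x}Q(d\Mid u;k)=0$. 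I expect the real difficulty to be the extremal situation where $\max\{r_1,\dots,r_p\}=t_{s+1}$, i.e.\ the equality case of Corollary~\ref{cor-rts}, which leaves no slack in the exponent matching of the second step; I would isolate this as a separate technical claim and settle it in the final section.
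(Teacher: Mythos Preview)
Your treatment of parts (1) and (2) is correct and matches the paper's proof essentially line for line: in (1) you invoke Corollary~\ref{cor-3} and locate the vanishing factor inside $V$, and in (2) you do the degree bookkeeping on $H$ and apply Lemma~\ref{lem-almostprop}.

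Part (3) is where your plan diverges from the paper, and there is a genuine gap. You propose to split $\CT_{x_{u_s}}Q(d\Mid u;k)$ into residue terms plus ``further terms'' coming from the high-degree part of the numerator, then kill the further terms with Lemma~\ref{lem-vanishing} and kill the residue terms by appealing to parts (1) and (2) at level $s+1$. Two problems:
\begin{itemize}
\item The residue terms $Q(d\Mid u_1,\dots,u_{s+1};\dots)$ need \emph{not} fall under parts (1) or (2) at level $s+1$; for the same $d$ they may again land in the range of part (3), now with $s$ replaced by $s+1$ and with new $r'_i$. Your third step as written is circular unless recast as a full induction on $n-s$, and even then you would be proving Lemma~\ref{lem-roots} rather than part (3) of Lemma~\ref{lem-Q} in isolation.
\item The ``further terms'' produced by polynomial division of the numerator by $\prod_{i\notin U}(q^{k_s-d}x_{u_s}/x_i)_d$ do not come with any obvious control on their monomial shape in the remaining variables $x_i$, $i\notin U$; it is not clear they fit the hypotheses of Lemma~\ref{lem-vanishing}.
\end{itemize}

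The paper avoids both issues with a sharper structural claim (its Lemma~\ref{lem-Laurent} / Lemma~\ref{lem-QLaurentpoly}): under the hypotheses of part (3), either $V=0$, or the $k_i$ are in the configuration of Case~(4) of Lemma~\ref{lem-key} with $t=c+t_{s+1}$, and in that configuration the denominator $\prod_{i\notin U}(q^{1-k_s}x_i/x_{u_s})_d$ \emph{divides} the numerator of $H$. Thus $Q(d\Mid u;k)$ is already a Laurent polynomial of the explicit shape
\[
x_{u_s}^{l}\prod_{i\notin U}p_i(x_i/x_{u_s})\,x_i^{\,d-sc-\sum_j\varepsilon_{iu_j}}
\prod_{\substack{i<j\\ i,j\notin U}}(x_i/x_j)_{c+\varepsilon_{ij}}(qx_j/x_i)_{c+\varepsilon_{ij}},
\qquad l=(n-s)(sc-d)+\sum_{i=1}^p r_i(n_i-r_i)\ge 0,
\]
with \emph{no} residue terms whatsoever. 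Taking $\CT_{x_{u_s}}$ then produces monomials whose exponent pattern is exactly what Lemma~\ref{lem-vanishing} needs; the verification reduces to the elementary inequality $\sum_{u=1}^p(sc+r_u-d)\le n_0-r_0-1$, which the paper proves from $d\ge sc+t_{s+1}+1$ via a short combinatorial lemma showing $-p(t_{s+1}+1)\le n_0-s-1$.

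Finally, your anticipated ``extremal difficulty'' $\max\{r_1,\dots,r_p\}=t_{s+1}$ is a red herring: the very hypothesis of part (3) forces $\sum_i r_i(n_i-r_i)/(n-s)\ge t_{s+1}+1$, and the paper checks (inside the proof of Lemma~\ref{lem-Laurent}) that the equality case of Corollary~\ref{cor-rts} is incompatible with that inequality, so in fact $\max\{r_1,\dots,r_p\}\ge t_{s+1}+1$ throughout.
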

\begin{proof}
(1)
Since $1\leq k_i\leq d$ for $i=1,\dots,s$, if $d$ satisfies the condition in (1) then
\[
1\leq k_i\leq (s-1)c+b+t_s \quad \text{for $i=1,\dots,s$}.
\]
By the definition of $r_i$, it is clear that $0\leq r_i\leq \min\{s,n_0\}$.
Then, at least one of the cases (i)--(iii) of
Corollary~\ref{cor-3} holds.

If (i) holds, then $Q(d\Mid u;k)$ has the factor
\[
E_{u,k}\Big[\big(qx_{u_i}/x_0\big)_b\Big]=\big(q^{1-k_i}\big)_b=0.
\]

If (ii) holds, then $Q(d\Mid u;k)$ has the factor
\[
E_{u,k}\Big[\big(x_{u_i}/x_{u_j}\big)_{c}\big(qx_{u_j}/x_{u_i}\big)_{c}\Big],
\]
which is equal to
\[
E_{u,k}\Big[q^{\binom{c+1}{2}}(-x_{u_j}/x_{u_i})^{c}\big(q^{-c}x_{u_i}/x_{u_j}\big)_{2c}\Big]
=q^{\binom{c+1}{2}}(-q^{k_i-k_j})^{c}\big(q^{k_j-k_i-c}\big)_{2c}=0.
\]

If (iii) holds, then $Q(d\Mid u;k)$ has the factor
\[
E_{u,k}\Big[\big(x_{u_i}/x_{u_j}\big)_{c+1}\big(qx_{u_j}/x_{u_i}\big)_{c+1}\Big],
\]
which is equal to
\[
E_{u,k}\Big[q^{\binom{c+2}{2}}(-x_{u_j}/x_{u_i})^{c+1}\big(q^{-c-1}x_{u_i}/x_{u_j}\big)_{2c+2}\Big]
=q^{\binom{c+2}{2}}(-q^{k_i-k_j})^{c+1}\big(q^{k_j-k_i-c-1}\big)_{2c+2}=0.
\]

(2)
We first show that $Q(d\Mid u;k)$ is of the form \eqref{e-defF} for $d>sc+\sum_{i=1}^pr_i(n_i-r_i)/(n-s)$ and $s\neq n$.

Recall that $U=\{u_1,u_2,\dots,u_s\}$. By the expression for $Q(d\Mid u;k)$ in \eqref{defi-Q},
the part which contributes to the degree in $x_{u_s}$
of the numerator of  $Q(d\Mid u;k)$ is
\[
\prod_{\substack{i=1\\ i\notin U}}^n\prod_{j=1}^s
\big(q^{k_s-k_j+\chi(u_j>i)}x_{u_s}/x_i\big)_{c+\varepsilon_{iu_j}},
\]
which has degree at most
\begin{equation*}
s(n-s)c+\sum_{i\notin U}\sum_{j=1}^s\varepsilon_{iu_j}
=s(n-s)c+\sum_{i=1}^pr_i(n_i-r_i).
\end{equation*}
The part which contributes to the degree in $x_{u_s}$ of the denominator of $Q(d\Mid u;k)$ is
\[
\prod_{\substack{i=1\\ i\notin U}}^n
\big(q^{k_s-d}x_{u_s}/x_i\big)_d,
\]
which has degree $(n-s)d$.
If $d>sc+\sum_{i=1}^pr_i(n_i-r_i)/(n-s)$ then
\[
s(n-s)c+\sum_{i=1}^pr_i(n_i-r_i)<(n-s)d.
\]
Thus $Q(d\Mid u;k)$ is of the form \eqref{e-defF}.
Applying Lemma~\ref{lem-almostprop} gives
\[
\CT_{x_{u_s}}Q(d\Mid u;k)=
\begin{cases}\displaystyle
\sum_{\substack{u_s<u_{s+1}\leq n\\1\leq k_{s+1}\leq d}}
Q(d\Mid u_1,\dots,u_s,u_{s+1};k_1,\dots,k_s,k_{s+1}) \quad &\text{for $u_s<n$,}\\
0 \quad &\text{for $u_s=n$.}
\end{cases}
\]

(3) The proof of Case (3) is lengthy, we give the proof in Subsection~\ref{subsec-case3} below.
\end{proof}

By Lemma~\ref{lem-Q}, we can prove Lemma~\ref{lem-roots}.
\begin{proof}[Proof of Lemma~\ref{lem-roots}]
Take $d=-a$.
Since $\CT\limits_{\x} Q(d)=D_n(-d)$, we determine the roots of $D_n(a)$ by showing that
\begin{equation}\label{fact}
\CT_{\x} Q(d)=0 \quad \text{for $d\in R$},
\end{equation}
where $R$ is defined in \eqref{def-roots}.
Then each $-d\in R$ is a root of $D_n(a)$.
We proceed the proof by induction on $n-s$ that
\begin{equation}\label{prop-roots13-1}
\CT_{\x} Q(d\Mid u;k)=0 \quad  \text{for $d\in R$}.
\end{equation}
The $s=0$ case of \eqref{prop-roots13-1} is just the fact \eqref{fact}.

If $s=n$ then $u_i$ must equal $i$
for $i=1,\dots ,n$ and thus
\[
Q(d\Mid u;k)=Q(d\Mid 1,2,\dots,n;k_1,k_2,\dots,k_n).
\]
Since $d\leq(n-1)c+b+n_{s_p}-1$ from the definition of $R$,
by the property (1) of Lemma~\ref{lem-Q} with $s=n$ we have
\[
Q(d\Mid 1,2,\dots,n;k_1,k_2,\dots,k_n)=0.
\]

Now suppose that $0\leq s<n$.
If $d\leq (s-1)c+b+t_s$, then
by the property (1) of Lemma~\ref{lem-Q} we have $Q(d\Mid u;k)=0$.
If $sc+t_{s+1}+1\leq d\leq sc+\sum_{i=1}^pr_i(n_i-r_i)/(n-s)$, then
by the property (3) of Lemma~\ref{lem-Q}
\[
\CT_{\x} Q(d\Mid u;k)=0.
\]
If $d>sc+\sum_{i=1}^pr_i(n_i-r_i)/(n-s)$, then by the property (2) of Lemma~\ref{lem-Q}
\[
\CT_{x_{u_s}}Q(d\Mid u;k)
=
\begin{cases}\displaystyle
\sum_{\substack{u_s<u_{s+1}\leq n\\1\leq k_{s+1}\leq d}}
Q(d\Mid u_1,\dots,u_s,u_{s+1};k_1,\dots,k_s,k_{s+1}) \quad &\text{for $u_s<n$,}\\
0 \quad &\text{for $u_s=n$.}
\end{cases}
\]
For $u_s<n$, applying $\CT\limits_{\x}$ to both sides of the above equation gives
\[
\CT_{\x}Q(d\Mid u;k)=\sum_{\substack{u_s<u_{s+1}\leq n\\1\leq k_{s+1}\leq d}}
\CT_{\x}Q(d\Mid u_1,\dots,u_s,u_{s+1};k_1,\dots,k_s,k_{s+1}).
\]
Each summand in the above is 0 by induction.

In conclusion, we show that $\CT\limits_{\x} Q(d\Mid u;k)=0$ for all $d\in R$,
completing the proof.
\end{proof}

\section{Case (3) of Lemma~\ref{lem-Q}}\label{sec-last}

In this section, we demonstrate that the rational function $Q(d \Mid u; k) $, as defined in \eqref{eq-Qrk}, can be transformed into a Laurent polynomial under certain conditions. By utilizing the vanishing coefficients outlined in Lemma~\ref{lem-vanishing}, we ultimately confirm that the constant term of
$Q(d \Mid u; k)$ vanishes under the conditions specified in Case (3) of Lemma~\ref{lem-Q}. These transformations effectively extend Gessel and Xin's proof of constant term identities via Laurent series.

It is important to note that we must address the scenarios in which Lemma~\ref{lem-almostprop} is not applicable. We define the degree of a variable (for instance, $z$) in a multivariable rational function as the degree of $z$ in the numerator minus the degree of $z$ in the denominator. Lemma~\ref{lem-almostprop} is limited to rational functions with negative degrees. In our previous work \cite{LXZ}, we extended this lemma to encompass nonpositive cases. More recently, we have further adapted the Gessel–Xin method to address positive cases \cite{XZ2023, Zhou23}. In fact, the primary results of this section concerning the case where $p=1$ were previously discussed in \cite[Section 9]{XZ2023}. Here, we extend these results to cover the general cases presented in this section.

\subsection{The Laurent polynomiality of $Q(d\Mid u;k)$}

In this subsection, we show that the rational function $Q(d\Mid u;k)$ is either 0, or can be written as a Laurent polynomial. This is the content of the next lemma.
\begin{lem}\label{lem-QLaurentpoly}
Let $(u_1,u_2,\dots,u_s)$ be an integer sequence such that $1\leq u_1<u_2<\cdots<u_s\leq n$ and define $U=\{u_1,u_2,\dots,u_s\}$.
We denote $r_i:=|U\cap N_i|$ as defined in \eqref{r} and let $t_s$ be defined as in \eqref{def-t} such that $\sum_{i=1}^pr_i(n_i-r_i)/(n-s)\geq t_{s+1}+1$.
If $1\leq d\leq sc+b+t_{s+1}$, then
$Q(d\Mid u;k)$ is either 0, or can be written as a Laurent polynomial of the form
\begin{equation}\label{Q-Laurent}
 x_{u_s}^l\prod_{\substack{i=1\\ i\notin U}}^n
\Big(p_i(x_i/x_{u_s})x_i^{d-sc-\sum_{j=1}^s\varepsilon_{iu_j}}\Big)
\prod_{\substack{1\leq i<j\leq n\\i,j\notin U}}\big(x_i/x_j\big)_{c+\varepsilon_{ij}}
(qx_{j}/x_{i}\big)_{c+\varepsilon_{ij}},
\end{equation}
where the $p_i(x_i/x_{u_s})$ are polynomials in $x_i/x_{u_s}$ and
\[
l=(n-s)(sc-d)+\sum_{i=1}^pr_i(n_i-r_i).
\]
\end{lem}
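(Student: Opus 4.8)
\textbf{Proof proposal for Lemma~\ref{lem-QLaurentpoly}.}

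The plan is to analyze the factors of $Q(d\Mid u;k)$ as written in \eqref{defi-Q} one block at a time, tracking the degree in each variable $x_i$ with $i\notin U$, and show that after clearing the $q$-Pochhammer denominators one is left with a Laurent polynomial of the claimed shape, unless a vanishing factor appears. First I would recall from \eqref{defi-Q}--\eqref{eq-H} that $Q(d\Mid u;k)=V\cdot H\cdot\big(\prod_{i=1}^s(q^{k_i-d})_{d-k_i}^{-1}(q)_{k_i-1}^{-1}\big)\cdot\prod_{i<j,\,i,j\notin U}(x_i/x_j)_{c+\varepsilon_{ij}}(qx_j/x_i)_{c+\varepsilon_{ij}}$, where $V$ (defined in \eqref{def-V}) is a scalar in $q$ independent of the $x$'s, so it plays no role in Laurent polynomiality: if some factor of $V$ vanishes we are in the ``$Q=0$'' case and are done. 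The only genuinely rational part is $H$, and within $H$ the only denominator is $\prod_{i\notin U}(q^{k_s-d}x_{u_s}/x_i)_d$. So the whole question reduces to: is this denominator cancelled by the numerator of $H$, factor by factor in $i$?

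The key step is a local degree count at each fixed $i\notin U$. The denominator contributes, for that $i$, the factor $(q^{k_s-d}x_{u_s}/x_i)_d$, which regarded as a polynomial in $x_i^{-1}$ has degree $d$ (equivalently it is $x_i^{-d}$ times a polynomial in $x_i$ of degree $d$). The numerator factors involving $i$ are $(q^{1-k_s}x_i/x_{u_s})_b$ together with $\prod_{j=1}^s(q^{k_j-k_s+\chi(i>u_j)}x_i/x_{u_s})_{c+\varepsilon_{iu_j}}(q^{k_s-k_j+\chi(u_j>i)}x_{u_s}/x_i)_{c+\varepsilon_{iu_j}}$; of these, the pieces that carry negative powers of $x_i$ (i.e.\ are polynomials in $x_i/x_{u_s}$) are the ``$x_{u_s}/x_i$'' factors, contributing total $x_i$-degree $-\sum_{j=1}^s(c+\varepsilon_{iu_j})=-sc-\sum_{j=1}^s\varepsilon_{iu_j}$. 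The plan is to argue that for the denominator factor to be cancelled we need $d\le sc+\sum_{j=1}^s\varepsilon_{iu_j}$ to hold simultaneously for \emph{all} $i\notin U$; if this fails for some $i$, then a division-algorithm / partial-fraction argument must be invoked, but the hypothesis $1\le d\le sc+b+t_{s+1}$ together with $\sum_{i=1}^p r_i(n_i-r_i)/(n-s)\ge t_{s+1}+1$ is precisely calibrated (via the averaging inequality $\sum_i\sum_j\varepsilon_{iu_j}=\sum_{i=1}^p r_i(n_i-r_i)$ over the $n-s$ indices $i\notin U$) to force the total excess degree to be controllable. More precisely, I would sum the per-$i$ excess $\max(0,d-sc-\sum_j\varepsilon_{iu_j})$ over $i\notin U$ and show it equals $(n-s)(d-sc)+\big(\text{correction}\big)$, matching the exponent $l=(n-s)(sc-d)+\sum_i r_i(n_i-r_i)$ up to sign after factoring out $x_{u_s}^l$ and the monomials $x_i^{d-sc-\sum_j\varepsilon_{iu_j}}$ displayed in \eqref{Q-Laurent}. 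Each residual numerator block then collapses to a genuine polynomial $p_i(x_i/x_{u_s})$, and the Weyl-type product $\prod_{i<j,\,i,j\notin U}(x_i/x_j)_{c+\varepsilon_{ij}}(qx_j/x_i)_{c+\varepsilon_{ij}}$ carries over untouched, giving exactly \eqref{Q-Laurent}.

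For the bookkeeping I would also need the elementary rewriting identities \eqref{e-ab} (in the form $\frac{(x_l/x_i)_c(qx_i/x_l)_c}{(q^{-j}x_i/x_l)_c}=q^{c(j+1)}(q^{-c}x_l/x_i)_{j+1}(q^{j+1}x_l/x_i)_{c-j-1}$ and its variants) to convert each quotient ``numerator block over denominator block'' into a product of two honest $q$-Pochhammer symbols with a monomial prefactor — this is the same mechanism used in the proof of Lemma~\ref{lem-split}, so it is routine here. The genuinely delicate point, and what I expect to be the main obstacle, is handling the indices $i\notin U$ for which the denominator degree $d$ exceeds the available cancelling degree $sc+\sum_j\varepsilon_{iu_j}$: here the factor $(q^{k_s-d}x_{u_s}/x_i)_d$ is only partially cancelled, and one must show that after cancellation the \emph{remaining} denominator factor has the form $(1-q^m x_{u_s}/x_i)$ with $m$ in a range making it a unit in the iterated-Laurent-series ring only when paired correctly — or else that the leftover forces $Q(d\Mid u;k)=0$ outright. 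Disentangling the interplay between the $\chi(i>u_j)$ shifts, the parameter $t_{s+1}$, and the inequality $\sum_i r_i(n_i-r_i)/(n-s)\ge t_{s+1}+1$ is where the real work lies; I would isolate it as a separate sub-claim bounding $\#\{i\notin U: d>sc+\sum_j\varepsilon_{iu_j}\}$ and the total excess, and dispatch it by a counting argument parallel to the ball-in-boxes reasoning in the proof of Corollary~\ref{cor-3}.
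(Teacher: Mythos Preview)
Your degree-counting strategy has a genuine gap: matching the degree in $x_i^{-1}$ of the numerator against that of the denominator $(q^{k_s-d}x_{u_s}/x_i)_d$ does \emph{not} ensure divisibility. The denominator is a product of $d$ distinct linear factors $\prod_{m}(1-q^{k_s-d+m}x_{u_s}/x_i)$, and for $H$ to be a Laurent polynomial each such factor must literally occur in the numerator. A quotient like $(1-z)(1-q^2z)/(1-qz)$ already shows that a favorable degree count is worthless here. The identities \eqref{e-ab} you cite do convert certain \emph{specific} quotients into honest Pochhammer products, but only when the exponent of the denominator lies in the correct range relative to the numerator exponents; that range depends on the actual values $k_1,\dots,k_s$, not merely on $s$, $c$, $d$ and the $\varepsilon_{iu_j}$.

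The missing idea is the dichotomy supplied by Lemma~\ref{lem-key} with $t=c+t_{s+1}$. You observed that if $V$ vanishes then $Q(d\Mid u;k)=0$, but you never asked what the non-vanishing of $V$ \emph{implies}. The point is that $V\neq 0$ forces cases (1)--(3) of Lemma~\ref{lem-key} to fail, so case~(4) must hold: there exist a permutation $w\in\mathfrak{S}_s$ and nonnegative integers $\mathfrak t_1,\dots,\mathfrak t_s$ with $k_{w(1)}=b+\mathfrak t_1$ and $k_{w(j)}-k_{w(j-1)}=c+\chi(\cdot)+\mathfrak t_j$ subject to the bound \eqref{sumbound}. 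This rigid arithmetic structure on the $k_j$ is exactly what drives the cancellation. In the paper's argument (Lemma~\ref{lem-Laurent}) one rewrites $H$ as in \eqref{HH} and then shows, using the case-(4) relations together with $\max\{r_1,\dots,r_p\}\ge t_{s+1}+1$ (a consequence of Corollary~\ref{cor-rts} and the hypothesis $\sum_i r_i(n_i-r_i)/(n-s)\ge t_{s+1}+1$), the set containment $S_0\subseteq B\cup S_1\cup\cdots\cup S_s$ of exponent sets for each $i\notin U$. That containment---not a degree bound---is what makes every denominator factor cancel and yields the form \eqref{Q-Laurent}. Your proposed ``ball-in-boxes'' count on $\#\{i\notin U: d>sc+\sum_j\varepsilon_{iu_j}\}$ is aimed at the wrong target and would not salvage the argument.
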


Recall that $Q(d\Mid u;k)$ can be written as \eqref{defi-Q}.
We can further write one of its factor $H$ (defined in \eqref{eq-H}) as
\begin{equation}\label{HH}
H=C\times\prod_{\substack{i=1\\ i\notin U}}^n
\frac{(q^{1-k_s}x_i/x_{u_s})_{b}}{(x_{u_s}/x_i)^d\big(q^{1-k_s}x_{i}/x_{u_s}\big)_d}\\
\prod_{\substack{i=1\\ i\notin U}}^n\prod_{j=1}^s
(x_{u_s}/x_i)^{c+\varepsilon_{iu_j}}
\big(q^{k_j-k_s-\chi(u_j>i)-\varepsilon_{iu_j}-c+1}x_i/x_{u_s}\big)_{2c+2\varepsilon_{iu_j}},
\end{equation}
where
\[
C=\prod_{\substack{i=1\\ i\notin U}}^n(-1)^dq^{\binom{d+1}{2}-dk_s}
\prod_{\substack{i=1\\ i\notin U}}^n\prod_{j=1}^s
(-1)^{c+\varepsilon_{iu_j}}
q^{(c+\varepsilon_{iu_j})\big(k_s-k_j+\chi(u_j>i)\big)+\binom{c+\varepsilon_{iu_j}+1}{2}}.
\]

Under certain conditions, the product $\prod_{\substack{i=1\\ i\notin U}}^n(q^{1-k_s}x_i/x_{u_s})_d$ in the denominator of $H$ is, in fact, a factor of the numerator of $H$. Consequently, $H$ can be expressed as a Laurent polynomial.
\begin{lem}\label{lem-Laurent}
For $s\in \{1,2,\dots,n-1\}$, let $r_i:=|U\cap N_i|$, $t_s$ and $R_i$ be defined as in \eqref{r}, \eqref{def-t} and \eqref{def-Ri} respectively. The $r_i$ satisfy
$\sum_{i=1}^pr_i(n_i-r_i)/(n-s)\geq t_{s+1}+1$, otherwise Case (3) of Lemma~\ref{lem-Q} does not occur.
Suppose $1\leq d\leq sc+b+t_{s+1}$, $k_i\in \{1,\dots,d\}$ for $i=1,\dots,s$, and
there exists a permutation $w\in\mathfrak{S}_s$ and nonnegative integers $\mathfrak{t}_1,\dots,\mathfrak{t}_s$
such that
\begin{equation}\label{kw1}
k_{w(1)}=b+\mathfrak{t}_1,
\end{equation}
and
\begin{equation}\label{kwj}
k_{w(j)}-k_{w(j-1)}=c+\chi\big(\{w(j),w(j-1)\}\subseteq R_i \ \text{for some $i=1,\dots,p$}\big)+\mathfrak{t}_j \quad \text{for $2\leq j\leq s$.}
\end{equation}
Here the $t_j$ satisfy
\begin{equation}\label{sumbound}
\max\{r_1,\dots,r_p\}\leq \sum_{j=1}^{s}\Big(\chi\big(\{w(j),w(j-1)\}\subseteq R_i \ \text{for some $i=1,\dots,p$}\big)+\mathfrak{t}_j\Big)\leq c+t_{s+1},
\end{equation}
$w(0):=0$, and $\mathfrak{t}_j>0$ if $w(j-1)<w(j)$ for $1\leq j\leq s$.
Then the rational function $H$ in \eqref{HH} can be written as the form
\begin{equation}\label{L-Laurent}
x_{u_s}^l\prod_{\substack{i=1\\ i\notin U}}^n
\Big(p_i(x_i/x_{u_s})x_i^{d-sc-\sum_{j=1}^s\varepsilon_{iu_j}}\Big),
\end{equation}
where
\[
l=(n-s)(sc-d)+\sum_{i=1}^pr_i(n_i-r_i),
\]
and the $p_i(x_i/x_{u_s})$ are polynomials in $x_i/x_{u_s}$.
\end{lem}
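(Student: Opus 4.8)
The plan is the following. The function $H$ in \eqref{HH} carries the factor $\prod_{i\notin U}(q^{1-k_s}x_i/x_{u_s})_d$ in its denominator, and the whole point is to show that, for each $i\notin U$, this factor is cancelled by a sub-product of the numerator of $H$; once this holds for every $i$, $H$ is a Laurent polynomial, and its precise shape \eqref{L-Laurent} then falls out of monomial bookkeeping together with a homogeneity count.

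First I would fix $i\notin U$ and, using the elementary flip $(a)_m=(-a)^mq^{\binom m2}(q^{1-m}/a)_m$, rewrite each $q$-shifted factorial in \eqref{HH} involving $x_i$ and $x_{u_s}$ as a product of linear factors $1-q^mx_i/x_{u_s}$ times a Laurent monomial. Reading off exponents turns the desired cancellation into the purely combinatorial assertion that
\[
\{b+1,\dots,d\}\ \subseteq\ \bigcup_{j=1}^{s}\mathcal I_j,\qquad
\mathcal I_j:=\bigl\{\,k_j-\chi(u_j>i)-\varepsilon_{iu_j}-c+1,\ \dots,\ k_j-\chi(u_j>i)+\varepsilon_{iu_j}+c\,\bigr\};
\]
the exponents $\{1,\dots,b\}$ are already accounted for by the numerator factor $(q^{1-k_s}x_i/x_{u_s})_b$, and coverage with multiplicity one is enough, since the $d$ exponents coming from $(q^{1-k_s}x_i/x_{u_s})_d$ are pairwise distinct.

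To prove this covering I would use the permutation $w$ and the integers $\mathfrak t_1,\dots,\mathfrak t_s$ of the hypothesis: putting $m_j:=k_{w(j)}$ and $\chi_j:=\chi(\{w(j),w(j-1)\}\subseteq R_l\text{ for some }l\in\{1,\dots,p\})$, relations \eqref{kw1}--\eqref{kwj} yield $m_1=b+\mathfrak t_1\le m_2\le\cdots\le m_s$ with $m_j-m_{j-1}=c+\chi_j+\mathfrak t_j$. Reindexing the $\mathcal I_j$ by $w$, one has to check (a) the first reindexed interval reaches down to $b+1$; (b) consecutive reindexed intervals overlap or abut, so that no gap opens; (c) the last reindexed interval reaches up to $d$. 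Step (c) is routine: $m_s=b+(s-1)c+\sum_{j=1}^s(\chi_j+\mathfrak t_j)$ together with the lower bound in \eqref{sumbound}, Corollary~\ref{cor-rts}, the estimate $\sum_{l=1}^pr_l(n_l-r_l)\le(n-s)\max\{r_1,\dots,r_p\}$, and the Case~(3) hypothesis $\sum_{l=1}^pr_l(n_l-r_l)/(n-s)\ge t_{s+1}+1$ give $\max\{r_1,\dots,r_p\}\ge t_{s+1}+1$, hence $m_s\ge b+(s-1)c+t_{s+1}+1$, which against $d\le sc+b+t_{s+1}$ forces $d\le m_s-\chi(u_{w(s)}>i)+\varepsilon_{iu_{w(s)}}+c$. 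Steps (a) and (b) are the heart of the matter and, I expect, the main obstacle: a failure at the bottom or a gap between two consecutive intervals would compel some slack $\mathfrak t_j$ to be large relative to $c$, but a large slack concentrated at one step (using that $\mathfrak t_j>0$ exactly at the ascending arcs of $w$, together with the aggregate bound \eqref{sumbound}) forces the remaining steps of $w$ to be descending with all $\chi=0$, and hence every block $R_l$ with $l\ge1$ to contain at most one position, contradicting $\max\{r_1,\dots,r_p\}\ge t_{s+1}+1$ once $t_{s+1}\ge1$ (the case $t_{s+1}=0$ being settled by the budget bound alone). Making this rigorous requires a case analysis on where $i$ sits among $N_1,\dots,N_p$ and on the placement of the ascending arcs of $w$; it is the $(p+1)$-component counterpart of the argument carried out for $p=1$ in \cite[Section~9]{XZ2023}.

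Once the covering holds for every $i\notin U$, each $(q^{1-k_s}x_i/x_{u_s})_d$ divides the numerator of $H$, so $H$ is a Laurent polynomial. I would then, for each $i$, package the leftover numerator factors into a polynomial $p_i(x_i/x_{u_s})$ in $x_i/x_{u_s}$ and collect every monomial pre-factor — the constant $C$ of \eqref{HH}, the factors $(x_{u_s}/x_i)^{c+\varepsilon_{iu_j}}$, and the Laurent monomials produced by the flips — so as to write $H$ as $x_{u_s}^{\,l}\prod_{i\notin U}\bigl(p_i(x_i/x_{u_s})\,x_i^{\,d-sc-\sum_{j=1}^s\varepsilon_{iu_j}}\bigr)$. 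Finally, since every factor in \eqref{HH} is homogeneous of degree $0$ in $(x_{u_s},\{x_i\}_{i\notin U})$, equating total degrees in this last expression and using $\sum_{i\notin U}\sum_{j=1}^s\varepsilon_{iu_j}=\sum_{l=1}^pr_l(n_l-r_l)$ forces $l=(n-s)(sc-d)+\sum_{l=1}^pr_l(n_l-r_l)$, which completes the argument.
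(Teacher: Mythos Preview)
Your plan matches the paper's: reduce the Laurent-polynomiality of $H$ to the covering $\{b+1,\dots,d\}\subseteq\bigcup_j\mathcal I_j$ and verify the three endpoint/overlap conditions. Your (c) is fine, and your derivation of $\max\{r_1,\dots,r_p\}\ge t_{s+1}+1$ from the elementary bound $\sum_l r_l(n_l-r_l)\le(n-s)\max_l r_l$ is actually cleaner than the paper's route through the equality case of Corollary~\ref{cor-rts}.

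The gap is in your mechanism for (a) and (b), and the needed argument is \emph{not} the case analysis on the location of $i$ that you anticipate. Your heuristic ``a large slack at one step forces the remaining arcs to be descending with all $\chi=0$'' is not a valid deduction once $t_{s+1}\ge2$: from $\sum_{j\ne l}(\chi_j+\mathfrak t_j)\le t_{s+1}-1$ you cannot conclude that every term vanishes. The correct (and simpler) tool is the omit-one-term inequality \eqref{lowerbound1} of Proposition~\ref{prop-min}: since $\chi_j+\mathfrak t_j\ge e_j(w,r)$ for each $j$, one has $\sum_{j\ne l}(\chi_j+\mathfrak t_j)\ge\max\{r_1,\dots,r_p\}-1\ge t_{s+1}$ for every $l$. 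For (a), a failure means $\mathfrak t_1>c$, hence $\sum_{j\ge2}(\chi_j+\mathfrak t_j)<t_{s+1}$ by the upper bound in \eqref{sumbound}, an immediate contradiction. For (b), a gap at step $l$ yields after telescoping $k_{w(s)}\ge d+\delta+1$ with $\delta:=\varepsilon_{iu_{w(l)}}+\varepsilon_{iu_{w(l-1)}}+\chi(u_{w(l)}>i)-\chi(u_{w(l-1)}>i)\ge-1$; if $\delta\ge0$ this already exceeds $d$, while the borderline $\delta=-1$ forces $u_{w(l-1)}>i>u_{w(l)}$ and both $\varepsilon$'s zero, whence (because the $N_m$ are intervals) $\{w(l-1),w(l)\}$ lies in no common $R_m$ and $w(l-1)>w(l)$, so $e_l=0$; now the \emph{full} bound \eqref{e-lowerbound} gives $\sum_{j\ne l}(\chi_j+\mathfrak t_j)\ge N_{w,r}\ge\max\{r_1,\dots,r_p\}$, contradicting the tightness $\sum_{j\ne l}(\chi_j+\mathfrak t_j)=\max\{r_1,\dots,r_p\}-1$ needed to keep $k_{w(s)}\le d$. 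No case split on which $N_m$ contains $i$ is required.
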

Note that the complex restrictions for the $k_i$ are simply a manifestation of Case (4) in Lemma~\ref{lem-key}.
\begin{proof}
 In \eqref{HH} we write $H$ as the form $\prod (x_{u_s}/x_i)^{\pm}\prod(1-q^lx_i/x_{u_s})^{\pm}$.
Therefore, we denote the range of $l$ as the following sets.
Let
\begin{align*}
S_0:=&\{1-k_s,2-k_s,\dots,d-k_s\},
\quad B:=\{1-k_s,2-k_2,\dots,b-k_s\}, \\
S_j:=&\{k_j-k_s-\chi(u_j>i)-\varepsilon_{iu_j}-c+1,\dots,k_j-k_s+c+\varepsilon_{iu_j}-\chi(u_j>i)\}
\end{align*}
for $j=1,\dots,s$ and a fixed $i\in \{1,\dots,n\}\setminus U$.
Assuming the conditions of the lemma, we demonstrate that the product
\[
\prod_{\substack{i=1\\ i\notin U}}^n \big(q^{1-k_s}x_i/x_{u_s}\big)_d
\]
in the denominator of $H$ is, in fact, a factor of its numerator. Consequently, $H$ can be expressed as a Laurent polynomial, rather than merely as a rational function.
To achieve this, it is sufficient to show that
\begin{equation}\label{set}
S_0\subseteq \bigcup_{j=1}^sS_j\bigcup B
\end{equation}
since the factors of $H$ in \eqref{HH} that require cancellation are of the form $\prod_j(1-q^jx_i/x_{u_s})$.
To obtain \eqref{set}, it suffices to prove
\begin{enumerate}
\item $b-k_s\geq k_{w(1)}-k_s-\chi(u_{w(1)}>i)-\varepsilon_{i u_{w(1)}}-c$, which is sufficient to show that
\[k_{w(1)}\leq b+c;\]
\item $k_{w(s)}-k_s+c+\varepsilon_{iu_j}-\chi(u_j>i)\geq d-k_s$, which is sufficient to show that
$k_{w(s)}\geq d+1-c$;
\item $k_{w(j-1)}+c+\varepsilon_{i u_{w(j-1)}}-\chi(u_{w(j-1)}>i)\geq k_{w(j)}-c-\varepsilon_{i u_{w(j)}}-\chi(u_{w(j)}>i)$ for $j=2,\dots,s$,
    which is
    \[
   k_{w(j)}-k_{w(j-1)}\leq 2c+\varepsilon_{i u_{w(j)}}+\varepsilon_{i u_{w(j-1)}}+\chi(u_{w(j)}>i)-\chi(u_{w(j-1)}>i).
    \]
\end{enumerate}

To prove (1), we need to find the upper bound for $k_{w(1)}$.
By \eqref{kwj} we have
\begin{equation}\label{sum}
\sum_{j=2}^s\big(k_{w(j)}-k_{w(j-1)}\big)=k_{w(s)}-k_{w(1)}
=(s-1)c+\sum_{j=2}^s\Big(\chi\big(\{w(j),w(j-1)\}\subseteq R_i \ \text{for some $i=1,\dots,p$}\big)+\mathfrak{t}_j\Big).
\end{equation}
It follows that
\begin{equation}\label{e-l92-2}
k_{w(1)}=k_{w(s)}-(s-1)c-\sum_{j=2}^s\Big(\chi\big(\{w(j),w(j-1)\}\subseteq R_i \ \text{for some $i=1,\dots,p$}\big)+\mathfrak{t}_j\Big).
\end{equation}
Since we can take $\chi\big(\{w(j),w(j-1)\}\subseteq R_i \ \text{for some $i=1,\dots,p$}\big)+\mathfrak{t}_j$ as the $e_j(w,r)$ in Proposition~\ref{prop-min}, by \eqref{lowerbound1} for any $l\in \{1,\dots,s\}$ we have
\begin{equation}\label{e-l92-1}
\sum_{\substack{j=1\\j\neq l}}^s\Big(\chi\big(\{w(j),w(j-1)\}\subseteq R_i \ \text{for some $i=1,\dots,p$}\big)+\mathfrak{t}_j\Big)\geq \max\{r_1,\dots,r_p\}-1.
\end{equation}
Hence, substituting \eqref{e-l92-1} with $l=1$ into \eqref{e-l92-2} gives
\begin{equation}\label{e-192-3}
k_{w(1)}\leq k_{w(s)}-(s-1)c-\max\{r_1,\dots,r_p\}+1.
\end{equation}
Since $k_{w(s)}\leq d\leq sc+b+t_{s+1}$,
\begin{equation}\label{e-Laurent-1}
k_{w(1)}\leq b+c+t_{s+1}-\max\{r_1,\dots,r_p\}+1.
\end{equation}
By Corollary~\ref{cor-rts},
for $t_{s+1}>0$ we have $\max\{r_1,\dots,r_p\}\geq t_{s+1}$. The equality $\max\{r_1,\dots,r_p\}=t_{s+1}$ holds only when
$r_0=n_0$, $r_{w_1}=n_{w_1}$,\dots,$r_{w_{j-1}}=n_{w_{j-1}}$,$r_{w_j}=\cdots=r_{w_p}=n_{w_{j-1}}+k$ for $w\in \mathfrak{S}_p$.
But in this case, by straightforward computation we can conclude $\sum_{i=1}^pr_i(n_i-r_i)/(n-s)=t_{s+1}$, which contradicts to the condition
$\sum_{i=1}^pr_i(n_i-r_i)/(n-s)\geq t_{s+1}+1$ in the lemma.
Hence,
\begin{equation}\label{e-Laurent-2}
\max\{r_1,\dots,r_p\}\geq t_{s+1}+1.
\end{equation}
Together with \eqref{e-Laurent-1} we have $k_{w(1)}\leq b+c$.
This is exactly the upper bound for $k_{w(1)}$ we require.
The only case remained is $t_{s+1}=0$.
In this case, $k_{w(s)}\leq d\leq sc+b$. Substituting this into \eqref{e-l92-2} yields
\[
k_{w(1)}\leq  c+b-\sum_{j=2}^s\Big(\chi\big(\{w(j),w(j-1)\}\subseteq R_i \ \text{for some $i=1,\dots,p$}\big)+\mathfrak{t}_j\Big)\leq c+b.
\]
This is also exactly the upper bound required for $k_{w(1)}$.
In conclusion, (1) holds for all the $s$.

To prove (2), we need to find the lower bound for $k_{w(s)}$.
By \eqref{sum} we have
\[
k_{w(s)}
=k_{w(1)}+(s-1)c+\sum_{j=2}^s\Big(\chi\big(\{w(j),w(j-1)\}\subseteq R_i \ \text{for some $i=1,\dots,p$}\big)+\mathfrak{t}_j\Big).
\]
Substituting \eqref{kw1} into the above equation gives
\begin{equation}\label{eq-kws}
k_{w(s)}
=b+(s-1)c+\sum_{j=1}^s\Big(\chi\big(\{w(j),w(j-1)\}\subseteq R_i \ \text{for some $i=1,\dots,p$}\big)+\mathfrak{t}_j\Big).
\end{equation}
If $t_{s+1}>0$ then by \eqref{sumbound} and \eqref{e-Laurent-2} we have
\[
k_{w(s)}
\geq b+(s-1)c+\max\{r_1,\dots,r_p\}\geq b+sc+t_{s+1}+1-c\geq d+1-c.
\]
Hence, (2) holds if $t_{s+1}>0$.
If $t_{s+1}=0$, then by utilizing the fact that $\mathfrak{t}_1\geq 1$ in \eqref{eq-kws}
we can derive the following inequality:
\[
k_{w(s)}
\geq b+(s-1)c+1
=b+sc-c+1\geq d-c+1.
\]
The last inequality is justified by the condition $sc+b\geq d$ when $t_{s+1}=0$.
Therefore, we conclude that (2) also holds for the case when $t_{s+1}=0$.

 If (3) fails for some $l\in \{2,\dots,s\}$, then
\[
    k_{w(l)}-k_{w(l-1)}\geq 2c+\varepsilon_{i u_{w(l)}}+\varepsilon_{i u_{w(l-1)}}+\chi(u_{w(l)}>i)-\chi(u_{w(l-1)}>i)+1.
\]
Together with \eqref{kwj} gives
\begin{multline}\label{kw-need}
k_{w(s)}-k_{w(1)}=\sum_{j=2}^s\big(k_{w(j)}-k_{w(j-1)}\big)
\geq sc+\sum_{\substack{j=2\\j\neq l}}^s\Big(\chi\big(\{w(j),w(j-1)\}\subseteq R_i \ \text{for some $i=1,\dots,p$}\big)+\mathfrak{t}_j\Big)\\
+\varepsilon_{i u_{w(l)}}+\varepsilon_{i u_{w(l-1)}}+\chi(u_{w(l)}>i)-\chi(u_{w(l-1)}>i)+1.
\end{multline}
Substituting \eqref{kw1} into the above equation, we have
\begin{multline*}
k_{w(s)}\geq sc+b+\sum_{\substack{j=1\\j\neq l}}^s\Big(\chi\big(\{w(j),w(j-1)\}\subseteq R_i \ \text{for some $i=1,\dots,p$}\big)+\mathfrak{t}_j\Big)\\
+\varepsilon_{i u_{w(l)}}+\varepsilon_{i u_{w(l-1)}}+\chi(u_{w(l)}>i)-\chi(u_{w(l-1)}>i)+1.
\end{multline*}
Substituting \eqref{e-l92-1} into the above equation and then using \eqref{e-Laurent-2}, we have
\begin{align}\label{e-3}
k_{w(s)}&\geq sc+b+\max\{r_1,\dots,r_p\}+\varepsilon_{i u_{w(l)}}+\varepsilon_{i u_{w(l-1)}}+\chi(u_{w(l)}>i)-\chi(u_{w(l-1)}>i)\\
&\geq sc+b+t_{s+1}+\varepsilon_{i u_{w(l)}}+\varepsilon_{i u_{w(l-1)}}+\chi(u_{w(l)}>i)-\chi(u_{w(l-1)}>i)+1\nonumber \\
&\geq d+\varepsilon_{i u_{w(l)}}+\varepsilon_{i u_{w(l-1)}}+\chi(u_{w(l)}>i)-\chi(u_{w(l-1)}>i)+1.\nonumber
\end{align}
If any of the equalities in \eqref{e-3} does not hold, then we can conclude that $k_{w(s)}>d$. This contradicts to
the fact that $k_{w(s)}\leq d$.
We discuss the following cases:
\begin{itemize}
\item If $t_{s+1}=0$, then $d\leq sc+b$. By \eqref{kw-need} and $k_{w(1)} \geq b+1$, we have
\[
k_{w(s)}\geq  sc+b+1>d.
\]
This is a contradiction.

\item If $t_{s+1}>0$ and all the equalities in \eqref{e-3} hold, then the next two conditions must be satisfied:
\begin{itemize}
    \item[(a)] $\varepsilon_{i u_{w(l)}}+\varepsilon_{i u_{w(l-1)}}+\chi(u_{w(l)}>i)-\chi(u_{w(l-1)}>i)+1=0$, which implies
        \begin{itemize}
        \item $\{i,u_{w(l)}\},\{i,u_{w(l-1)}\}\not\subset N_k$ for all $k=1,\dots,p$;
        \item $u_{w(l-1)}>i>u_{w(l)}$, which means that $w(l-1)>w(l)$ since $u_1 < u_2 < \cdots <u_s$.
    \end{itemize}
    Then it follows that $\{u_{w(l-1)},u_{w(l)}\}\not\subset N_k$ for all $k=1,\dots,p$.
    Otherwise if $\{u_{w(l-1)},u_{w(l)}\}\subset N_k$ for some $k$ then both $\{i,u_{w(l)}\}$ and $\{i,u_{w(l-1)}\}$ belong to $N_k$.
    Hence,
     $\{w(l),w(l-1)\}\not\subset R_k$ for all $k=1,\dots,p$. Because $\{u_i,u_j\}\subseteq N_k \iff \{i,j\}\subseteq R_k$ by their definitions.
\item[(b)]
\begin{equation}\label{e-3-0}
\sum_{\substack{j=1\\j\neq l}}^s\Big(\chi\big(\{w(j),w(j-1)\}\subseteq R_i \ \text{for some $i=1,\dots,p$}\big)+\mathfrak{t}_j\Big)=\max\{r_1,\dots,r_p\}-1.
\end{equation}
    \end{itemize}
Using the consequences that $w(l-1)>w(l)$ and $\{w(l),w(l-1)\}\not\subset R_k$ for all $k=1,\dots,p$ in (a),
we have
\begin{align*}
&\sum_{\substack{j=1\\j\neq l}}^s\Big(\chi\big(\{w(j),w(j-1)\}\subseteq R_i \ \text{for some $i=1,\dots,p$}\big)+\mathfrak{t}_j\Big)\\
&\geq \sum_{\substack{j=1\\j\neq l}}^s\Big(\chi\big(\{w(j),w(j-1)\}\subseteq R_i \ \text{for some $i=1,\dots,p$}\big)+\chi\big(w(j-1)<w(j)\big)\Big)\\
&=\sum_{j=1}^s\Big(\chi\big(\{w(j),w(j-1)\}\subseteq R_i \ \text{for some $i=1,\dots,p$}\big)+\chi\big(w(j-1)<w(j)\big)\Big)\\
&\geq \max\{r_1,\dots,r_p\}.
\end{align*}
The last inequality hods by \eqref{e-lowerbound}.
Then, \eqref{e-3-0} can not hold. Consequently, $k_{w(s)}>d$. This is a contradiction.
\end{itemize}
Therefore, we can conclude that (3) holds.

Since (1)-(3) hold,  we obtain \eqref{set}.
It follows that $H$ is a Laurent polynomial of the form
\[
\prod_{\substack{i=1\\ i\notin U}}^n
\frac{p_i(x_i/x_{u_s})}{(x_{u_s}/x_i)^d}
\prod_{\substack{i=1\\ i\notin U}}^n\prod_{j=1}^s
(x_{u_s}/x_i)^{c+\varepsilon_{iu_j}}
=x_{u_s}^l\prod_{\substack{i=1\\ i\notin U}}^n
\Big(p_i(x_i/x_{u_s})x_i^{d-sc-\sum_{j=1}^s\varepsilon_{iu_j}}\Big),
\]
where the $p_i(x_i/x_{u_s})$ are polynomials in $x_i/x_{u_s}$,
\[
l=(n-s)(sc-d)+\sum_{i=1}^pr_i(n_i-r_i)
\]
and $C$ can be put into the $p_i$'s.
\end{proof}

By Lemma~\ref{lem-Laurent} and Lemma~\ref{lem-key}, we can give a proof of Lemma~\ref{lem-QLaurentpoly}.
\begin{proof}[Proof of Lemma~\ref{lem-QLaurentpoly}]
Recall that $Q(d\Mid u;k)$ can be written as the form in \eqref{defi-Q}.
By Lemma~\ref{lem-key} with $t=c+t_{s+1}$, at least one of the following cases holds:
\begin{enumerate}
\item $1\leq k_i\leq b$ for some $i$ with $1\leq i\leq s$;
\item $-c\leq k_i-k_j\leq c-1$ for some $(i,j)$ such that $1\leq i<j\leq s$
and $\{i,j\}\nsubseteq R_l$ for all $l=1,2,\dots,p$;
\item $-c-1\leq k_{i}-k_{j}\leq c$ for some $(i,j)$ such that $1\leq i<j\leq s$
and $\{i,j\}\subseteq R_l$ for some $l\in \{1,2,\dots,p\}$;
\item there exists a permutation $w\in\mathfrak{S}_s$ and nonnegative integers $d_1,\dots,d_s$
such that
\begin{subequations}\label{e-k1}
\begin{equation}\label{e-k1-1}
k_{w(1)}=b+d_1,
\end{equation}
and
\begin{equation}\label{e-k1-2}
k_{w(j)}-k_{w(j-1)}=c+\chi\big(\{w(j),w(j-1)\}\subseteq R_i \ \text{for some $i=1,\dots,p$}\big)+d_j
 \quad \text{for $2\leq j\leq s$.}
\end{equation}
\end{subequations}
Here the $d_j$ satisfy
\begin{equation}\label{t}
\max\{r_1,\dots,r_p\}\leq \sum_{j=1}^{s}\Big(\chi\big(\{w(j),w(j-1)\}\subseteq R_i \ \text{for some $i=1,\dots,p$}\big)+d_j\Big)\leq c+t_{s+1},
\end{equation}
$w(0):=0$, and $d_j>0$ if $w(j-1)<w(j)$ for $1\leq j\leq s$.

\end{enumerate}

If one of the cases (1)-(3) holds, then $V$ defined in \eqref{def-V} is zero by a similar argument as that in the first part of the proof of Lemma~\ref{lem-Q}.
Since $V$ is a factor of $Q(d\Mid u;k)$, we can conclude that $Q(d\Mid u;k)=0$.

If Case (4) holds and $\sum_{i=1}^pr_i(n_i-r_i)/(n-s)\geq t_{s+1}+1$,
then $Q(d\Mid u;k)$ can be written as the form of \eqref{Q-Laurent} by Lemma~\ref{lem-Laurent}.
Here the extra coefficients can be put into the $p_i$'s.
\end{proof}

\subsection{Proof for Case (3) of Lemma~\ref{lem-Q}}\label{subsec-case3}

In this subsection, we provide a proof for Case (3) of Lemma~\ref{lem-Q}.
That is, under the conditions of Lemma~\ref{lem-Q},
if $s\neq n$ and
$sc+t_{s+1}+1\leq d\leq sc+\sum_{i=1}^pr_i(n_i-r_i)/(n-s)$,
then $\CT\limits_x Q(d\Mid u;k)=0.$
\begin{proof}[Proof for Case (3) of Lemma~\ref{lem-Q}]
We can apply  Lemma~\ref{lem-QLaurentpoly} to obtain that
$Q(d\Mid u;k)$ is either 0, or can be written as a Laurent polynomial of the form
\begin{equation*}
L:=x_{u_s}^l\prod_{\substack{i=1\\ i\notin U}}^n
\Big(p_i(x_i/x_{u_s})x_i^{d-sc-\sum_{j=1}^s\varepsilon_{iu_j}}\Big)
\prod_{\substack{1\leq i<j\leq n\\i,j\notin U}}\big(x_i/x_j\big)_{c+\varepsilon_{ij}}
(qx_{j}/x_{i}\big)_{c+\varepsilon_{ij}},
\end{equation*}
where the $p_i(x_i/x_{u_s})$ are polynomials in $x_i/x_{u_s}$, $l=(n-s)(sc-d)+\sum_{i=1}^pr_i(n_i-r_i)$ and $U=\{u_1,\dots,u_s\}$.
Since $d\leq sc+\sum_{i=1}^pr_i(n_i-r_i)/(n-s)$, it follows that $l$ is a nonnegative integer.
By taking the constant term of $L$ with respect to $x_{u_s}$,
we can write $\CT\limits_{x_{u_s}}L$ as a finite sum of the form
\begin{align}
P:&=c\times\frac{\prod_{\substack{i=1\\ i\notin U}}^{n_0}x_i^{d-sc}}
{\prod_{\substack{i=n_0+1\\ i\notin U}}^nx_i^{sc+\sum_{j=1}^s\varepsilon_{iu_j}-d}}
\prod_{\substack{i=1\\ i\notin U}}^n x_i^{t_i}\prod_{\substack{1\leq i<j\leq n\\i,j\notin U}}\big(x_i/x_j\big)_{c+\varepsilon_{ij}}
(qx_{j}/x_{i}\big)_{c+\varepsilon_{ij}}\nonumber \\
&=c\times\frac{\prod_{\substack{i=1\\ i\notin U}}^{n_0}x_i}
{\prod_{u=1}^p\prod_{\substack{v\in N_u\\v\notin U}}x_v^{sc+r_u-d}}
\prod_{\substack{i=1\\ i\notin U}}^{n_0}x_i^{d-sc-1}\prod_{\substack{i=1\\ i\notin U}}^n x_i^{t_i}\prod_{\substack{1\leq i<j\leq n\\i,j\notin U}}\big(x_i/x_j\big)_{c+\varepsilon_{ij}}
(qx_{j}/x_{i}\big)_{c+\varepsilon_{ij}},\label{LP1}
\end{align}
where $c\in K$ and the $t_i$ are nonnegative integers such that
$\sum_{\substack{i=1\\i\notin U}}^nt_i=l$.
Here $d-sc-1\geq 0$ since $d\geq sc+t_{s+1}+1$.
We will show that all forms of $\CT\limits_{\x}P=0$.

It is not hard to see that \eqref{LP1} is of the form \eqref{V0-2} by variable substitutions.
We are going to prove $\CT\limits_{\x}P=0$ by Lemma~\ref{lem-vanishing}.
To achieve this, it is sufficient to show that the sum of the distinct powers of the $x_v$ in the denominator of \eqref{LP1}
\begin{equation}\label{e-last}
\sum_{u=1}^p(sc+r_u-d)\leq n_0-r_0-1.
\end{equation}
We can further write the left-hand side of \eqref{e-last} as
\begin{equation}\label{e-last1}
p(sc-d)+r_1+\cdots+r_p\leq -p(t_{s+1}+1)+r_1+\cdots+r_p
\end{equation}
by $d\geq sc+t_{s+1}+1$.
Hence, if we can prove that
\[
-p(t_{s+1}+1)\leq n_0-r_0-\cdots-r_p-1=n_0-s-1,
\]
then \eqref{e-last} holds. This is confirmed by the next lemma.
\end{proof}

\begin{lem}
For a positive integer $s$, let $n_0,\dots,n_p$ be positive integers such that
$n_0+\cdots+n_p=n$.
Take
\begin{equation}\label{ts}
t_{s}=\begin{cases}
0 \quad &\text{if $1\leq s\leq n-\sum_{i=1}^p n_i+p$},\\
\big\lfloor\frac{s-n_0-1}{p} \big\rfloor \quad &\text{if $n-\sum_{i=1}^p n_i+p+1\leq s
\leq n-\sum_{i=2}^pn_{s_i}+(p-1)n_{s_1}$},\\
\big\lfloor\frac{s-n_0-n_{s_1}-1}{p-1} \big\rfloor \quad
&\text{if $n-\sum_{i=2}^pn_{s_i}+(p-1)n_{s_1}+1\leq s
\leq n-\sum_{i=3}^pn_{s_i}+(p-2)n_{s_2}$},\\
\quad \vdots\\
s-n_0-n_{s_1}-\cdots-n_{s_{p-1}}-1 \quad &\text{if
$n-n_{s_p}+n_{s_{p-1}}+1\leq s
\leq n$,}
\end{cases}
\end{equation}
where $(n_{s_1},\dots,n_{s_p})$ is a permutation of $(n_1,\dots,n_p)$ in an increasing order.
Then
\begin{equation}\label{e-last0}
-p(t_{s}+1)\leq n_0-s.
\end{equation}
\end{lem}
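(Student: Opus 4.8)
The plan is to verify \eqref{e-last0} directly against the piecewise definition \eqref{ts} of $t_s$, handling the row $t_s=0$ separately and collapsing the remaining $p$ rows into one computation via the uniform formula \eqref{eq-tt}. Throughout I abbreviate $\sigma_{j-1}:=n_0+\sum_{i=1}^{j-1}n_{s_i}$ (so $\sigma_0=n_0$), which yields the convenient identities $n-\sum_{i=j}^{p}n_{s_i}=\sigma_{j-1}$ and $n-\sum_{i=j+1}^{p}n_{s_i}+(p-j)n_{s_j}=\sigma_{j-1}+(p-j+1)n_{s_j}$. For the first row of \eqref{ts} we have $1\le s\le n-\sum_{i=1}^{p}n_i+p=n_0+p$ and $t_s=0$, so $n_0-s\ge n_0-(n_0+p)=-p=-p(t_s+1)$, and there is nothing more to do.

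For the rows indexed by $j\in\{1,\dots,p\}$, I would fix $j$ and recall from \eqref{eq-tt} (with the convention $n_{s_0}:=1$, matching $n_{w_0}=1$ there) that on the range $\sigma_{j-1}+(p-j+1)n_{s_{j-1}}+1\le s\le\sigma_{j-1}+(p-j+1)n_{s_j}$ one has $t_s=\big\lfloor\frac{s-\sigma_{j-1}-1}{p-j+1}\big\rfloor$. Two elementary observations drive the estimate. First, the lower endpoint of this range gives $s-\sigma_{j-1}-1\ge(p-j+1)n_{s_{j-1}}$, hence $t_s\ge n_{s_{j-1}}$. Second, writing the Euclidean division $s-\sigma_{j-1}-1=(p-j+1)t_s+\rho$ with $0\le\rho\le p-j$, I obtain
\[
s-n_0=(\sigma_{j-1}-n_0)+1+(p-j+1)t_s+\rho=\sum_{i=1}^{j-1}n_{s_i}+1+(p-j+1)t_s+\rho.
\]
Subtracting this from $p(t_s+1)$ yields $p(t_s+1)-(s-n_0)=(j-1)t_s+(p-1-\rho)-\sum_{i=1}^{j-1}n_{s_i}$, and since $\rho\le p-j$ we have $p-1-\rho\ge j-1$; thus it suffices to prove $(j-1)(t_s+1)\ge\sum_{i=1}^{j-1}n_{s_i}$.

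This last inequality is exactly where the increasing order of $n_{s_1},\dots,n_{s_p}$ enters: for each $i\le j-1$ we have $n_{s_i}\le n_{s_{j-1}}\le t_s$ by the first observation, so $\sum_{i=1}^{j-1}n_{s_i}\le(j-1)t_s\le(j-1)(t_s+1)$, while for $j=1$ the sum is empty and the claim is vacuous. Hence $p(t_s+1)\ge s-n_0$, that is $-p(t_s+1)\le n_0-s$, which finishes the row and so establishes \eqref{e-last0}. Everything here is routine bookkeeping once the uniform formula \eqref{eq-tt} is in hand; the one step that genuinely relies on the precise endpoints in \eqref{ts} is the bound $t_s\ge n_{s_{j-1}}$, and I expect that — rather than any hard inequality — to be the point one must get right, since it is precisely what powers the monotonicity estimate on $\sum_{i<j}n_{s_i}$.
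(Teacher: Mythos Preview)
Your proof is correct and follows essentially the same approach as the paper's: both treat the $t_s=0$ row directly, then handle rows $j=1,\dots,p$ uniformly via the formula \eqref{eq-tt} and a direct estimate exploiting the increasing order of the $n_{s_i}$. The only cosmetic difference is that the paper subdivides each row into blocks indexed by $k$ (on which $t_s$ is constant) and uses the upper endpoint of each block, whereas you achieve the same thing more compactly via Euclidean division $s-\sigma_{j-1}-1=(p-j+1)t_s+\rho$; the key ingredient in both is the bound $t_s\ge n_{s_{j-1}}$ together with $n_{s_i}\le n_{s_{j-1}}$ for $i<j$.
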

Note that the definition of $t_s$ here is exactly the same as \eqref{def-t}.
\begin{proof}
To warm up, we first prove the $t_s=0$ case for $1\leq s\leq n-\sum_{i=1}^pn_i+p$.
In this case,
\[
s\leq n-\sum_{i=1}^pn_i+p=n_0+p.
\]
It follows that
\[
n_0-s\geq -p,
\]
which is the  $t_s=0$ case of \eqref{e-last0}.

As in the proof of Lemma~\ref{cor-3}, for the $t_s>0$ cases in \eqref{ts}, we can summarize the formulas
for $t_s$ according to the rows $j=1,\dots,p$:
\begin{equation}\label{eq-t}
t_s=\Big\lfloor\frac{s-n_0-\sum_{i=1}^{j-1}n_{s_i}-1}{p-j+1} \Big\rfloor
\end{equation}
for
\begin{equation}\label{e-s}
n-\sum_{i=j}^{p}n_{s_i}+(p-j+1)n_{s_{j-1}}+1\leq s\leq
n-\sum_{i=j}^{p}n_{s_i}+(p-j+1)n_{s_j}.
\end{equation}
Here for the $j=1$ case we take $n_{s_0}=1$.
Note that there are $p+1$ rows in \eqref{ts}. We identify the first row as the row 0.
For a fixed $j\in \{1,\dots,p\}$, there are $(p-j+1)(n_{s_j}-n_{s_{j-1}})$ such integers $s$ that satisfy \eqref{e-s}.
Hence, we can further assume that $n_{s_p}>n_{s_{p-1}}>\cdots>n_{s_1}>1$. Otherwise, the row $j$ in \eqref{ts} actually does not exist.

Having the above definitions, to prove \eqref{e-last0}, it suffices to prove that for a given $j\in \{1,\dots,p\}$,
\begin{equation}\label{ineq-last}
-p\Big(\Big\lfloor\frac{s-n_0-\sum_{i=1}^{j-1}n_{s_i}-1}{p-j+1} \Big\rfloor+1\Big)\leq n_0-s,
\end{equation}
where $s$ is in the range of \eqref{e-s}.

We further divide the range of $s$ in \eqref{e-s} into $n_{s_j}-n_{s_{j-1}}$ parts:
\begin{equation}\label{e-last-s}
n-\sum_{i=j}^{p}n_{s_i}+(p-j+1)n_{s_{j-1}}+(p-j+1)(k-1)+1\leq s\leq
n-\sum_{i=j}^{p}n_{s_i}+(p-j+1)n_{s_{j-1}}+(p-j+1)k
\end{equation}
for $k=1,\dots,n_{s_j}-n_{s_{j-1}}$. Note that the $j$ here is a fixed integer in $\{1,\dots,p\}$ and $n_{s_j}-n_{s_{j-1}}\geq 1$ by our assumption.
It is easy to see that if the $s$ is in the range of \eqref{e-last-s}, then the left-hand side of \eqref{ineq-last}
\[
-p\Big(\Big\lfloor\frac{s-n_0-\sum_{i=1}^{j-1}n_{s_i}-1}{p-j+1} \Big\rfloor+1\Big)
=-p(n_{s_{j-1}}+k).
\]
Using the upper bound for $s$ in \eqref{e-last-s}, we have
\begin{align*}
n_0-s&\geq n_0-n+\sum_{i=j}^{p}n_{s_i}-(p-j+1)n_{s_{j-1}}-(p-j+1)k\\
&=-p(n_{s_{j-1}}+k)+n_0-n+\sum_{i=j}^{p}n_{s_i}+(j-1)n_{s_{j-1}}+(j-1)k\\
&\geq -p(n_{s_{j-1}}+k)+n_0-n+\sum_{i=j}^{p}n_{s_i}+n_{s_1}+\cdots+n_{s_{j-1}}+(j-1)k\\
&= -p(n_{s_{j-1}}+k)+(j-1)k\geq -p(n_{s_{j-1}}+k).
\end{align*}
The last inequality in the above holds by the fact that $j,k\geq 1$. Therefore, we can conclude that \eqref{ineq-last} holds.
\end{proof}

\subsection*{Acknowledgements}

This work was supported by the
National Natural Science Foundation of China (No. 12171487).

\end{document}